\theoremstyle{plain}
\renewcommand{\theequation}{\thesection.\arabic{equation}}
\newtheorem{lemma}{Lemma}[section]
\newtheorem{theorem}{Theorem}[section]
\newtheorem{corollary}{Corollary}[section]
\newtheorem{proposition}{Proposition}[section]
\newtheorem{remark}{Remark}[section]
\renewcommand\thefigure{\thesection.\@arabic\c@figure}
\renewcommand\thetable{\thesection.\@arabic\c@table}
\newcommand{\bs}[1]{\boldsymbol{#1}}
\def \ri {{\rm i}}
\DeclareSymbolFont{ugmL}{OMX}{mdugm}{m}{n}
\DeclareMathAccent{\wideparen}{\mathord}{ugmL}{"F3}
\begin{document}
\bibliographystyle{plain}
\graphicspath{{./figs/}}
\baselineskip 13pt

\title[Wave Equation With Exact Nonreflecting Boundary Conditions]
{Fast and Accurate Computation of Time-Domain Acoustic Scattering Problems with Exact Nonreflecting Boundary Conditions}
\author[L. Wang, ~ B. Wang ~  $\&$~ X. Zhao~~] {Li-Lian Wang${}^{1}$, \quad
  Bo Wang${}^{2}$ \quad
and \quad    Xiaodan Zhao${}^{1}$}
\thanks{
\noindent${}^{1}$ Division of Mathematical Sciences, School of Physical
and Mathematical Sciences,  Nanyang Technological University,
637371, Singapore. The research of the authors is partially
supported by Singapore AcRF Tier 1 Grant RG58/08.\\
\indent${}^{2}$  College of Mathematics and Computer Science, Hunan Normal University,
Changsha, Hunan 410081, China. This author would like to thank the Division of Mathematical Sciences of Nanyang Technological University for the hospitality during the visit.
}
\date{}
 \keywords{Time-domain  Dirichlet-to-Neumann map,  nonreflecting boundary conditions, inverse Laplace transform,  modified Bessel functions, convolution, spectral methods, Newmark's time integration}
 \subjclass{35J05, 35L05, 65R10, 65N35, 65E05, 65M70} 

\begin{abstract}  This paper is concerned with fast and accurate computation of  exterior wave equations truncated via exact circular or
spherical  nonreflecting boundary conditions (NRBCs, which are known to be nonlocal in both time and space).
We first derive analytic expressions for the underlying convolution kernels, which allow for a rapid and accurate
evaluation of the convolution with $O(N_t)$ operations over $N_t$ successive time steps. To handle the nonlocality in space,
we introduce the notion of  boundary perturbation,  which enables us to handle general bounded scatters by solving a sequence
of wave equations in  a regular domain. We propose an efficient spectral-Galerkin solver with Newmark's time integration for
the truncated wave equation in the regular domain. We also provide ample numerical results to show high-order accuracy of NRBCs
and efficiency of the proposed scheme.
\end{abstract}

\maketitle

\vspace*{-15pt}

\section{Introduction}

Wave propagation and scattering problems in unbounded media arise from diverse application  areas  such   as acoustics,
aerodynamics, electromagnetics,  antenna design,  oceanography and among others (see, e.g.,
\cite{Givoli1992,shlager1995selective,collins1994inverse}).  Various approaches have been proposed for their numerical studies
that include the boundary element methods
(cf. \cite{CISB91}), infinite element methods (cf. \cite{Dem.G96}),
perfectly matched layers (PML)  (cf. \cite{Bere94}),   nonreflecting
boundary condition  methods (cf. \cite{Kel.G89,Hagstrom99}), and
among others (cf. \cite{martin2006multiple,Taflove05}). An essential
ingredient for the latter approach is to  truncate an  unbounded
domain to a bounded domain by imposing  an exact or approximate
nonreflecting (absorbing or transparent) boundary condition at the
outer artificial boundary, where the NRBC is designed  to prevent
spurious wave reflection from the artificial boundary (cf.  the
review papers \cite{Givoli1992,GivoliDan2004} and the references
therein).  The frequency-domain approaches for e.g., the
time-harmonic Helmholtz problems have been intensively investigated,
while the time-domain simulations, which are capable of capturing
wide-band signals and modeling more general material inhomogeneities
and  nonlinearities (cf. \cite{Alpert02,CohenG2002}), have been
relatively less studied.

Although some types of NRBCs based on different principles have
been proposed (see, e.g.,
\cite{Enguist77,Bayliss80,ting1986exact,Sofronov1992,Sofronov1998,grote1995exact,Hagstrom99,GivoliDan2004}),
a longstanding issue of time-domain computation is the efficient
treatment for  NRBCs that can  scale and integrate well with the
solver for the underlying truncated problem (cf.
\cite{Thompson2000,HuanThompson2000}). In practice, if an accurate
NRBC is imposed, the artificial boundary could be placed as close as
possible to the scatter that can significantly reduce the
computational cost. In this paper, we restrict our attention to the exact
NRBC on the circular or spherical artificial boundary. One major
difficulty lies in that such a NRBC is {\em global} in space and
time in nature,  as it involves the Fourier/spherical harmonic
expressions in space, and  history dependence in time induced by a
convolution.  The convolution kernel, termed as {\em nonreflecting
boundary kernel} (NRBK) in \cite{Alpert}, is the inverse Laplace
transform of an expression that includes the logarithmic derivative
of a modified Bessel function.  The rapid computation of the NRBK and
convolution is of independent interest.
Alpert, Greengard and Hagstrom \cite{Alpert} proposed a rational
approximation of the logarithmic derivative with a least square
implementation, which allows for a reduction of the summation of the
poles from $O(\nu)$ to $O(\log \nu\log \frac 1 \varepsilon)$ (where
$\nu\gg 1$ is the order of the modified Bessel function  and
$\varepsilon$ is a given tolerance),  and a recursive convolution. Jiang
and Greengard \cite{Jiang1,Jiang2} further considered some interesting
applications to Schr$\ddot o$dinger equations in one and two
dimensions.  Li  \cite{LiJing2006} introduced a more accurate low order approximation
of the three-dimensional NRBK at a slightly expensive cost, where the observation that the Laplace transform
of the three-dimensional NRBK is exactly a rational function lies at the heart of this algorithm.
However, in many cases, the expressions are {\em not} rational functions.
For instance, the two-dimensional NRBK also contains the contributions from the brach-cut along the negative real axis (see Theorem \ref{theorem1} below).
 Lubich and Sch{\"a}dle  \cite{Lubistian2002} developed some fast algorithm for the temporal convolution with  $O(N_t\log N_t)$
 operations (over $N_t$ successive time steps) arising from NRBCs with non-rational expressions for other equations
 (e.g., Schr$\ddot o$dinger equations and damped wave equations).

In this paper, we derive an analytic  formula for the NRBK based on a direct
 inversion of the Laplace transform by the residue theorem (see Theorem \ref{theorem1} below).
In fact,  Sofronov \cite{Sofronov1998} presented some formulas of
similar type by working on much more complicated  expressions of the
kernel in terms of Tricomi's confluent hypergeometric functions.
We show that with these formulas, we can  evaluate the temporal convolution recursively and rapidly with $O(N_t)$ operations
 and almost without extra memory for the history dependence. Moreover, the analytic expression provides a useful apparatus for
 the stability and convergence analysis. It is worthwhile to remark that  Chen \cite{Chen.ZM2009} reformulated the two-dimensional
 wave problem into a first-order system in time and showed the well-posedness of the truncated problem with an alternative formulation
 of the NRBC.

It is known that the nonlocality of the NRBC in space can be
efficiently handled by Fourier/spherical harmonic expansions when
the scatter is a disk or a ball.   Recently, a systematic approach,
based on the boundary perturbation technique (also  called the {\it
transformed field expansion} (TFE) method (cf. \cite{Nic.R03})), has
been developed in \cite{Nic.S06,Fang.DShen07,Nich.Shen2009} for
time-harmonic Helmholtz equations in exterior domains with general
bounded obstacles, under which the whole algorithm boils down to
solving  a sequence of Helmholtz equations in a 2-D annulus or a 3-D
spherical shell. In this paper, we highlight that this notion can be
extended to time-domain computation, though it has  not been
investigated before as far as we know. In this paper,  we propose an
efficient spectral-Galerkin method with Newmark's time integration
for the truncated wave equations in an annulus or a spherical shell,
and provide ample numerical results to show the efficiency of the
solver and high accuracy of  NRBC from several angles.

The rest of the paper is organized as follows.  In Section 2, we present the formulation of  NRBCs,
and  derive the analytic formulas for  NRBKs.
In Section 3, we present some properties of  NRBK and analyze well-posedness of the truncated wave equation.
In Section 4, we outline the notion of the TFE method and propose an efficient spectral-Galerkin and Newmark's
time integration scheme for the truncated wave problem in regular domains.
We provide ample numerical results in Section 5.

\vspace*{-20pt}
\section{Evaluation of nonreflecting boundary kernels}

In this paper, we consider the time-domain acoustic scattering problem with sound-soft boundary conditions on the bounded obstacle:
\begin{align}
&\partial_t^2 U=c^2\Delta U+F,\quad {\rm in}\;\;  \Omega_\infty:={\mathbb R}^d\setminus \bar D,\;\; t>0,\;\; d=2,3; \label{eq1.1a} \\
&U=U_0,\quad \partial_t U=U_1,\quad {\rm in}\;\;\Omega_\infty, \;\; t=0;  \label{eq1.1b} \\
 & U=G,\quad {\rm on}\;\; \Gamma_D,\;\; t>0;\quad \partial_t U+c\partial_{\bs n} U=o(|\bs x|^{(1-d)/2}),\;\; |{\bs x}|\to \infty,\;\; t>0.    \label{eq1.1c}
\end{align}
Here, $D$ is a bounded obstacle (scatter)  with Lipschitz boundary
$\Gamma_D,$ $c>0$ is a given constant,  and the radiation condition
\eqref{eq1.1c}, where  ${\bs n}={\bs x}/|\bs x|$,  corresponds to
the well-known Sommerfeld radiation condition in the frequency
domain. Assume that the data $F, U_0$ and $U_1$ are compactly supported
in a 2-D disk or  a 3-D ball $B$ of radius $b.$

A common way is to reduce this exterior problem  to  the problem in
a bounded domain by imposing an exact or approximate NRBC at the
artificial boundary $\Gamma_b:=\partial B.$ In what follows, we
shall focus on the wave equation truncated by the exact circular or
spherical  NRBC:
\begin{align}
&\partial_t^2 U=c^2\Delta U+F,\quad {\rm in}\;\;  \Omega:=B\setminus \bar D,\;\; t>0,\;\; d=2,3; \label{eq1.1ab} \\
&U=U_0,\quad \partial_t U=U_1, \quad {\rm in}\;\;\Omega, \;\; t=0; \quad 
 U=G,\quad {\rm on}\;\; \Gamma_D,\;\; t>0; \label{eq1.1ccd}\\
&  \partial_r U=T_d(U),\quad {\rm at}\;\;  r=b,\;\; t>0,  \label{eq1.1ce}
\end{align}
where $T_d(U)$ is the so-called time-domain DtN map.

\subsection{Formulation of $T_d(U)$}\label{tdtnmethd}
We first present the expression of $T_d(U)$ in \eqref{eq1.1ce}, and refer to
e.g.,  \cite{Hagstrom99,Alpert02} (and the original references therein) for the detailed derivation.
It is known that the problem \eqref{eq1.1a}-\eqref{eq1.1c}, exterior to $D=B$ with $F=U_0=U_1\equiv 0$ and
$G=U|_{r=b}$ (i.e., the Dirichlet data taken from the interior problem   \eqref{eq1.1ab}-\eqref{eq1.1ce}), can be solved
analytically by using Laplace transform in time and separation of variables in space in polar coordinate $(r,\phi)$/spherical
coordinate $(r,\theta,\phi)$.
By imposing the continuity of directional derivative with respect to $r$ across the artificial boundary $r=b,$ we obtain
the boundary condition \eqref{eq1.1ce} with
 \begin{equation}\label{GdUadd}
T_d(U)=\begin{cases} \Big(-\dfrac 1 c \dfrac{\partial U}{\partial t}-  \dfrac{U}{2r}\Big)\Big|_{r=b}+
\displaystyle\sum_{|n|=0}^\infty \sigma_n(t)\ast  \widehat U_n(b,t)  e^{\ri n\phi},  & d=2,\\[7pt]
\Big(-\dfrac 1 c \dfrac{\partial U}{\partial t}-  \dfrac{U}{r}\Big)\Big|_{r=b} +  \displaystyle\sum_{n=0}^\infty
\displaystyle \sum_{|m|=0}^n \sigma_{n+1/2}(t)\ast  \widehat U_{nm}(b,t) Y_{n}^m(\theta,\phi), & d=3,
\end{cases}
\end{equation}
where
\begin{equation}\label{kernalf}
\sigma_\nu (t):=\mathcal{L}^{-1}\left[\frac{s}{c}+\frac{1}{2b}+\frac{s}{c}\frac{K_\nu'
(sb/c)}{K_\nu(sb/c)}\right],\quad \nu=n, n+ 1/2.
\end{equation}
Here,  $K_{\nu}$ is the modified Bessel function of the second kind  of order $\nu$ (see, e.g., \cite{Abr.S84,watson}), and
$\mathcal{L}^{-1}[H(s)]$ is the inverse  Laplace transform of a Laplace transformable function $h(t)$ with
\[
H(s)=\mathcal{L}[h(t)](s)=\int_0^\infty e^{-st}h(t)\,dt,\quad s\in {\mathbb C},\;\; {\rm Re}(s)>0.
\]
In \eqref{GdUadd}, $\{Y_{n}^m\}$  are the spherical harmonics, which are orthonormal as defined in \cite{Nedelec}, and
$\{\widehat U_n\}$/$\{\widehat U_{nm}\}$ are the Fourier/spherical harmonic expansion coefficients of $U|_{r=b}.$ Note that the convolution is defined as usual: $(f\ast g)(t)=\int_0^t f(t-\tau)g(\tau) d\tau.$

Alternatively, we can represent $T_d(U)$ by the temporal convolution in terms of the expansion coefficients of $\partial_t U|_{r=b}.$ More precisely, we define
\begin{equation}\label{nnectoon}
\omega_\nu(t):=\omega_\nu (t;d):= -\dfrac {(d-1)c} {2b} + c \displaystyle \int_0^t
\sigma_{\nu} (\tau)d \tau,
\end{equation}
and note that $\omega_\nu'(t)=c\sigma_\nu(t).$ Then, we find from \eqref{GdUadd} and integration by parts that
\begin{equation}\label{GdUaddnew}
T_d(U)=-\dfrac 1 c \dfrac{\partial U}{\partial t}\Big|_{r=b}+\frac 1 c  \begin{cases}
\displaystyle\sum_{|n|=0}^\infty \omega_n(t)\ast  \partial_t \widehat U_n(b,t)  e^{\ri n\phi},  & d=2,\\[7pt]
 \displaystyle\sum_{n=0}^\infty
\displaystyle \sum_{|m|=0}^n \omega_{n+1/2}(t)\ast  \partial_t \widehat U_{nm}(b,t) Y_{n}^m(\theta,\phi), & d=3,
\end{cases}
\end{equation}
where for $d=2,3,$
\begin{equation}\label{kernalw}
\omega_{\nu}(t)=\mathcal{L}^{-1}\left[1-\frac{(d-2)c}{2bs}+\frac{K_\nu'
(sb/c)}{K_\nu(sb/c)}\right](t),\quad \nu=n,n+1/2.
\end{equation}

Hereafter, we term  $\sigma_\nu$ and $\omega_\nu$ as the nonreflecting boundary kernels (NRBKs). 
Since  $K_{-n}(z)=K_{n}(z)$ {\rm(}see Formula {\rm 9.6.6} in \cite{Abr.S84}{\rm)},    it suffices to consider $\omega_n$ and $\sigma_n$ with  $n\ge 0,$ for $d=2.$ 

\begin{remark}\label{Augadd}  In the expressions of $\sigma_\nu$ and $\omega_\nu,$ some terms are added, e.g.,
$s/c$ and $1/(2b)$ in \eqref{kernalf},  for the purpose of removing the singular part from the ratio $K_\nu'/K_\nu$.
Indeed, recall the asymptotic formula
for fixed $\nu\ge 0$ and large $|z|$ {\rm(}see Formula {\rm 9.7.2} of \cite{Abr.S84}{\rm):}
\begin{equation}\label{asympt}
K_\nu (z)\sim \sqrt{\frac \pi {2z}} e^{-z}\Big\{1+\frac {4\nu^2-1}{8z}+O(z^{-2})\Big\},
\end{equation}
for $|{\rm arg} z|<{3\pi}/2,$ and  the recurrence  relation:
\begin{equation}\label{asympta}
z K_\nu'(z)=\nu K_\nu (z)-z K_{\nu+1}(z).
\end{equation}
 One verifies that
\begin{equation}\label{akasymp}
\frac{K_\nu'(z)}{K_\nu(z)}\sim -1-\frac 1 {2z} +O(z^{-2}).  \qed
\end{equation}
\end{remark}

\begin{remark}\label{Augadd3} We find that the use of the NRBK $\omega_\nu$ is more convenient,  if one reformulates \eqref{eq1.1ab}
into a first-order {\rm(}with respect to the time variable{\rm)} system  {\rm(cf. \cite{Chen.ZM2009})},
and it is  more suitable for analysis as well, while the NRBK $\sigma_\nu$ is more appropriate  for computation. \qed 
\end{remark}

We see that NRBCs are global in both time and space.  To solve  the truncated problem \eqref{eq1.1ab}-\eqref{eq1.1ce} efficiently,
we need to  (i) invert  Laplace transform to compute  NRBKs;
(ii)  deal with  temporal convolutions efficiently; and (iii) handle  the nonlocality of the NRBC in space effectively.
The rest of the paper will address these  issues.

\subsection{Evaluation of  the NRBKs}

Our starting point is to invert the Laplace transform via  evaluating  the  Bromwich's contour integral:
 \begin{equation} \label{inverlap}
 \begin{split}
\sigma_\nu (t)&=\frac 1 {2\pi \ri }\int_{\gamma-\infty \ri}^{\gamma +\infty \ri} \Big(\frac{s}{c}+\frac{1}{2b}+\frac{s}{c}\frac{K_\nu^{'}
(sb/c)}{K_\nu(sb/c)}\Big) e^{ts} ds=\frac{c}{2b^2\pi \ri}\int_{\gamma-\infty \ri}^{\gamma+\infty \ri}F_\nu (z)e^{czt/b}dz,
\end{split}
\end{equation}
for $\nu=n, n+1/2$ with $ n\ge 0,$ where
\begin{equation}\label{fffunnn}
F_\nu (z)=z+\frac{1}{2}+z\frac{K'_\nu (z)}{K_\nu(z)},
\end{equation}
and $\gamma$ is the Laplace convergence abscissa, which is a generic constant greater than the real part of any  singularity of $F_\nu(z).$

In order to use the residue theorem to evaluate   \eqref{inverlap},
we need to understand the behavior of the poles of $F_\nu(z),$ i.e.,
the zeros of $K_\nu(z).$
\begin{lemma} \label{lemma2}  Let $\nu\ge 0$ be a real number.
\begin{itemize}
\item[{\rm (i)}] If $z$ is a zero of $K_\nu(z),$ then its complex conjugate $\bar z$ is also a zero. 
Moreover, all complex conjugate pairs of zeros lie in the second and third quadrants with  ${\rm Re}(z)<0.$ 
\item[{\rm (ii)}]    The total number of zeros of $K_{\nu}(z)$  is the even integer nearest to $\nu-{1}/{2}$, if $\nu-{1}/{2}$ is not an integer, or  exactly  $\nu-{1}/{2},$
 if  $\nu-{1}/{2}$ is an integer.
 \item[{\rm (iii)}]  All zeros of $K_n(z)$ and $K_{n+{1}/{2}}(z)$ are
simple, and lie  approximately along the left half of the boundary
of an eye-shaped domain around $z=0$ {\rm(}see Figure
{\rm\ref{Figzerodistr})}.

\end{itemize}
\end{lemma}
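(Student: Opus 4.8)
The plan is to treat the three items in turn, invoking the classical theory of zeros of $K_\nu$ for what is genuinely hard and giving short arguments for the rest.

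For part (i), the conjugate symmetry is immediate: for real $\nu$ the function $K_\nu$ admits a representation with real coefficients --- e.g.\ $K_\nu(z)=\int_0^\infty e^{-z\cosh t}\cosh(\nu t)\,dt$ for ${\rm Re}(z)>0$, extended to the cut plane by analytic continuation --- so $\overline{K_\nu(z)}=K_\nu(\bar z)$; hence non-real zeros occur in conjugate pairs and any real zero must be negative, since $K_\nu(x)>0$ for $x>0$. To locate the zeros I would invoke the classical fact (Macdonald; Watson, \S15.7) that $K_\nu$ has no zero in the closed right half-plane ${\rm Re}(z)\ge 0$; one transparent route is the identity $K_\nu(z)=\tfrac{\pi}{2}\,\ri^{\,\nu+1}H^{(1)}_\nu(\ri z)$, valid for $|\arg z|\le \pi/2$, under which ${\rm Re}(z)\ge 0$ corresponds to $\ri z$ lying in the closed upper half-plane, where $H^{(1)}_\nu$ ($\nu\ge 0$ real) is zero-free. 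Together with the conjugate symmetry this confines the zeros to the open second and third quadrants --- or, in degenerate cases, to their common boundary, the negative real axis, as happens for $K_{3/2}$.

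For part (ii), the count is again classical (Watson, \S15.7). I would recover it by the argument principle applied to $\tfrac{1}{2\pi\ri}\oint K_\nu'/K_\nu\,dz$ over the boundary of a large disc with a thin slit deleted along $(-\infty,0]$: the large circular arc contributes no extra winding, by the asymptotics \eqref{asympt}; the small circle about the origin contributes via the $z^{-\nu}$ (or $\log z$, when $\nu=0$) behaviour of $K_\nu$; and the remaining winding is generated along the two edges of the slit, where $K_\nu(ze^{\pm\ri\pi})=e^{\mp\ri\pi\nu}K_\nu(z)\mp\ri\pi I_\nu(z)$ and the exponentially growing $I_\nu$-term forces exactly the stated number of sign changes (an even integer, the one nearest $\nu-\tfrac12$, or $\nu-\tfrac12$ itself when it is an integer). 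For the half-integer orders $\nu=n+\tfrac12$ I would also record the explicit check: here $K_{n+1/2}(z)=\sqrt{\pi/(2z)}\,e^{-z}\,p_n(1/z)$ with $p_n$ a polynomial of degree $n$ with positive coefficients (a reverse Bessel polynomial), so the zeros of $K_{n+1/2}$ are the reciprocals of the zeros of the $n$-th Bessel polynomial; by the well-known fact that Bessel polynomials have only simple zeros, all in $\{{\rm Re}(z)<0\}$, there are exactly $n=\nu-\tfrac12$ of them --- e.g.\ $K_{3/2}$ has the single zero $z=-1$ at the tip of the ``eye''.

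For part (iii), simplicity is elementary: if $z_0\neq 0$ were a multiple zero of $K_\nu$, then $K_\nu(z_0)=K_\nu'(z_0)=0$ would force the Wronskian $K_\nu(z_0)I_\nu'(z_0)-K_\nu'(z_0)I_\nu(z_0)$ to vanish, contradicting $K_\nu(z)I_\nu'(z)-K_\nu'(z)I_\nu(z)=1/z$; this applies to every zero and every real $\nu\ge 0$ ($z_0=0$ being excluded since $K_\nu$ is singular there). The ``eye-shaped'' location is the anti-Stokes picture furnished by the uniform (Debye) asymptotics: writing $z=\nu\zeta$, one has $K_\nu(\nu\zeta)\sim(\text{algebraic factor})\,e^{-\nu\eta(\zeta)}$ with $\eta(\zeta)=\sqrt{1+\zeta^2}+\log\!\big(\zeta/(1+\sqrt{1+\zeta^2})\big)$, and after continuation across the cut the competing exponentials $e^{\mp\nu\eta}$ balance in modulus precisely on the curve $\{{\rm Re}\,\eta(\zeta)=0\}$, which bounds an eye-shaped region, symmetric about the real axis, through $\zeta=0$; with the conjugate symmetry the zeros accumulate along its left half, as illustrated in Figure~\ref{Figzerodistr}. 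The main obstacle is item (ii): the sharp, parity-sensitive count for general (in particular integer) order must be teased out of the behaviour of $K_\nu$ across the branch cut --- or simply imported from Watson --- whereas the conjugate symmetry, the simplicity argument, and the explicit half-integer analysis are routine.
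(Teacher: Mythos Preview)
Your proposal is correct and, in fact, more self-contained than the paper's own proof. The paper simply cites Watson (p.~511) for (i) and (ii) without further argument, and for (iii) it establishes only the eye-shaped location, by using the rotation $K_\nu(z)=\tfrac{\pi\ri}{2}e^{\nu\pi\ri/2}H_\nu^{(1)}(\ri z)$ to transfer the zeros of $K_\nu$ in the third quadrant to zeros of $H_\nu^{(1)}$ in the fourth quadrant, then citing Abramowitz--Stegun (Fig.~9.6, p.~441) for the eye-shaped distribution of the Hankel zeros. By contrast, you supply a direct Wronskian argument for simplicity (which the paper's proof does not actually address), you invoke the Debye asymptotics directly on $K_\nu$ rather than passing through $H_\nu^{(1)}$, and you give an explicit polynomial check in the half-integer case. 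Your route is more informative and fills in what the paper leaves to citation; the paper's route is shorter but leans entirely on the classical references. Both are valid, and your argument-principle sketch for (ii) is essentially the computation Watson carries out, so there is no genuine discrepancy.
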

\begin{proof} The properties (i) and (ii) can be found from Page 511 of
\cite{watson}.  We now consider  the property
(iii).  As a consequence of (i), it suffices to consider the  zeros
of $K_{\nu}(z)$ in the third
 quadrant and on the negative real axis (i.e., with $-\pi\le {\rm arg }z<-\pi/2$) of the complex plane.  According to  Formula 9.6.4 of \cite{Abr.S84},
 we have the following  relation between $K_\nu(z)$ and the Hankel function of the first kind:
\begin{equation}\label{khrela}
K_{\nu}(z)=\frac{\pi \ri}{2} e^{\frac{1}{2}\nu\pi \ri}H_{\nu}^{(1)}(\ri z),\quad -\pi<{\rm arg} z\leq \frac{\pi}{2},
\end{equation}
which implies that all  zeros of $K_\nu(z)$ in the third quadrant
(i.e.,  with $-\pi<{\rm arg }z<-\pi/2$) are obtained by rotating all
zeros of $H_{\nu}^{(1)}(z)$ in the fourth quadrant (i.e.,  with
$-\pi/2<{\rm arg }z<0$) by an angle $-\pi/2.$ Recall that  the zeros
of  $H^1_\nu(z)$ in the fourth quadrant lie approximately  along the  boundary of an eye-shaped domain around $z=0$ (see
Figure 9.6 and Page 441 of \cite{Abr.S84}), whose   boundary curve
intersects the real axis at $z=n$ and the  imaginary axis at $z=-\ri na$, where $a=\sqrt{t_0^2-1}\approx 0.66274$ and
$t_0\approx 1.19968$ is the positive root of $\coth t=t$.
\end{proof}

For clarity, let $M_\nu$ be the total number of
zeros of $K_\nu(z)$ with $\nu=n, n+1/2,$ that is,
\begin{equation}\label{Mnscan}
M_\nu=\begin{cases}
\text{the largest even integer nearest to $n-1/2$},\;   &{\rm for}\; K_n(z),\\
n, \quad &{\rm for}\; K_{n+1/2}(z).
\end{cases}
\end{equation}
We plot in Figure \ref{Figzerodistr} some samples of zeros of
$K_n(z)$ (and $K_{n+1/2}(z)$ for various $n,$ and visualize
that for a given $n$, the zeros  sit on  the left half boundary of
an eye-shaped domain  that intersects the imaginary axis
approximately at $\pm  n\ri,$ and the negative real axis at $-na$
with $a\approx 0.66274$ (see the dashed coordinate  grids) as
predicted by  Lemma \ref{lemma2} (iii).

\begin{figure}[!ht]
\hspace*{-0.6cm}
\includegraphics[width=0.98\textwidth,height=0.3\textwidth]{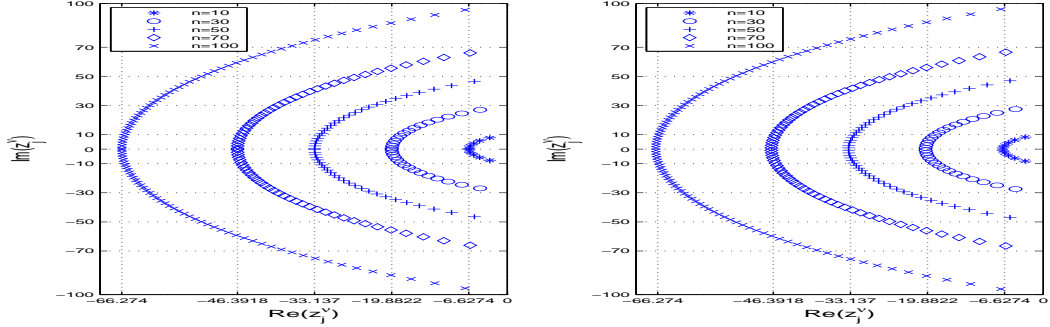}
\caption{\small Distributions of $\{z_j^\nu\}_{j=1}^{M_\nu}$ of
$K_{n}$  (left) and $K_{n+1/2}$ (right)  for various $n$.
}\label{Figzerodistr}
\end{figure}

With the above understanding of  the poles  of the integrand
$F_\nu(z)$ in \eqref{fffunnn}, we now present the exact formula for
the NRBKs $\sigma_\nu(t)$ with $\nu=n, n+1/2.$
\begin{theorem} \label{theorem1}  Let $\nu=n, n+1/2$ with $n\ge 0,$ and
let $\{z_j^\nu\}_{j=1}^{M_\nu}$ be the  zeros  of $K_{\nu}(z).$ Then
\begin{itemize}
\item for $d=2,$
\begin{equation}\label{sigmnt2d}
\sigma_n(t)=\frac{c}{b^2}\bigg\{\sum\limits_{j=1}^{M_n} z_j^n e^{ct z_{j}^{n}/b}+ (-1)^n\int_0^{\infty} \frac{e^{-c
tr/b}}{K^2_{n}(r)+\pi^2I^2_{n}(r)}dr \bigg\},
\end{equation}
\item  for $d=3,$
\begin{equation}\label{sigmnt3d}
\sigma_{\nu}(t)=\frac{c}{b^2}\sum\limits_{j=1}^{M_\nu} z_j^\nu e^{ct z_{j}^{\nu}/b},\quad \nu=n+1/2,
\end{equation}
\end{itemize}
where $I_n(z)$ is the modified Bessel function of the first kind
{\rm (cf. \cite{Abr.S84})}.
\end{theorem}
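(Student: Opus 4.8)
The plan is to evaluate the Bromwich integral \eqref{inverlap} by contour deformation and the residue theorem, the singularity structure of the integrand $F_\nu(z)e^{czt/b}$ being exactly what Lemma \ref{lemma2} supplies: finitely many \emph{simple} poles at the zeros $z_j^\nu$ of $K_\nu$, all with $\mathrm{Re}\,z<0$, together with---\emph{only} when $\nu=n$---the logarithmic branch point of $K_n$ at the origin and the associated branch cut along the negative real axis. Since $\gamma>0$ keeps the half-plane $\mathrm{Re}\,z\ge\gamma$ free of singularities, such deformations are legitimate. I would treat $d=3$ and $d=2$ separately.

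For $d=3$ ($\nu=n+1/2$) I would start from the elementary closed form $K_{n+1/2}(z)=\sqrt{\pi/(2z)}\,e^{-z}Q_n(z)$, where $Q_n(z)=\sum_{k=0}^n\frac{(n+k)!}{k!(n-k)!}(2z)^{-k}$ is a Laurent polynomial with $Q_n(\infty)=1$ (see \cite{Abr.S84}). Logarithmic differentiation cancels the $z^{-1/2}$ factor, giving $K_{n+1/2}'/K_{n+1/2}=-1/(2z)-1+Q_n'/Q_n$, hence $F_{n+1/2}(z)=z+\tfrac12+z\big(-\tfrac1{2z}-1+Q_n'/Q_n\big)=z\,Q_n'(z)/Q_n(z)$---a \emph{rational} function, $O(1/z)$ at infinity, whose poles are precisely the $n$ simple zeros of $Q_n$, i.e.\ of $K_{n+1/2}$. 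Closing the Bromwich line by a large semicircle in $\{\mathrm{Re}\,z<0\}$, the arc drops out by Jordan's lemma (the $O(1/z)$ decay together with $|e^{czt/b}|\le1$ there for $t>0$), so $\sigma_{n+1/2}(t)$ equals $c/b^2$ times the sum of residues of $F_{n+1/2}(z)e^{czt/b}$ at the $z_j^{n+1/2}$. Since each $z_j$ is a simple zero of $K_\nu$, one has $\mathrm{Res}_{z_j}\big(zK_\nu'/K_\nu\big)=z_j$, so $\mathrm{Res}_{z_j}\big(F_\nu(z)e^{czt/b}\big)=z_j e^{cz_jt/b}$, which yields \eqref{sigmnt3d}.

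For $d=2$ ($\nu=n$) I would deform the Bromwich line onto a keyhole contour: a large arc in the left half-plane, two rays just above and just below the cut, and a small circle of radius $\varepsilon$ about the origin. The large arc vanishes by Jordan's lemma using $F_n(z)=O(1/z)$ from Remark \ref{Augadd}; the small circle vanishes because $zK_n'(z)/K_n(z)$ is bounded near $0$ (it tends to $-n$ for $n\ge1$, and to $0$ for $n=0$ since $K_0(z)\sim-\log z$), so $F_n(z)e^{czt/b}$ stays bounded there. The enclosed poles contribute $\frac c{b^2}\sum_j z_j^n e^{ctz_j^n/b}$ as above, and it remains to evaluate the cut contribution $\int_0^\infty\big[F_n(-r+\ri0)-F_n(-r-\ri0)\big]e^{-ctr/b}\,dr$. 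Because $z+\tfrac12$ is single-valued, only $zK_n'/K_n$ jumps; I would obtain the boundary values of $K_n$ and of $K_n'$ on the two sides from the continuation formula $K_n(re^{\pm\ri\pi})=(-1)^nK_n(r)\mp\ri\pi I_n(r)$ and its derivative (see \cite{Abr.S84}), put the resulting quotient over the common denominator $K_n^2(r)+\pi^2I_n^2(r)$, and use the Wronskian $I_n'(r)K_n(r)-I_n(r)K_n'(r)=1/r$ to collapse the numerator, arriving at
\[
F_n(-r+\ri0)-F_n(-r-\ri0)=(-r)\,\frac{2\pi\ri(-1)^n/r}{K_n^2(r)+\pi^2I_n^2(r)}=\frac{-2\pi\ri(-1)^n}{K_n^2(r)+\pi^2I_n^2(r)}.
\]
Substituting this into the residue identity and multiplying by $c/(2\pi\ri b^2)$ produces the branch-cut term of \eqref{sigmnt2d}; the $r$-integral converges for $t\ge0$ because $I_n(r)$ grows like $e^{r}$.

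The main obstacle I anticipate is the branch-cut bookkeeping in the $d=2$ case: correctly pinning down the limiting value of $K_n'$ (not merely of $K_n$) on each side of the cut from the continuation formula, and tracking orientations together with the interplay of the factors $(-1)^n$, $\pi$, $\ri$ and the Wronskian so that everything assembles into exactly \eqref{sigmnt2d}. A secondary technical point is that $F_\nu$ decays only like $1/z$, so the vanishing of the large arc genuinely relies on Jordan's lemma rather than on absolute integrability, and the indentation at the origin---though the integrand there is bounded---is still required, the point not being analytic.
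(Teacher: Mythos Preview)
Your proposal is correct and follows essentially the same route as the paper's proof (Appendix~A): residues plus Jordan's lemma on a semicircular contour for $d=3$ (using the explicit polynomial form of $K_{n+1/2}$ to see that $F_{n+1/2}$ is rational), and a keyhole contour with branch-cut evaluation for $d=2$. The only cosmetic difference is in the cut computation: the paper first rewrites $K_n'/K_n$ via the recurrence \eqref{asympta} as $n/z-K_{n+1}/K_n$ and then invokes the Wronskian $I_nK_{n+1}+I_{n+1}K_n=1/z$, whereas you differentiate the continuation formula for $K_n$ directly and use the equivalent Wronskian $I_n'K_n-I_nK_n'=1/r$; both collapse to the same jump $-2\pi\ri(-1)^n/(K_n^2+\pi^2I_n^2)$.
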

We sketch the proof  in Appendix \ref{pffssl} by applying the
residue theorem  to the Bromwich's contour integral
\eqref{inverlap}. We remark that Sofronov \cite{Sofronov1998}
derived  formulas of similar type by  working on  much more
complicated expressions  in terms of Tricomi's
confluent hypergeometric functions. However, the formulas in the
above theorem are more compact and informative.

\begin{remark}\label{ksweiomegs}
Based on a delicate study of the logarithmic derivative of the Hankel function $H^{(1)}_\nu(z)$,
Alpert et al. \cite{Alpert} {\rm (}see Theorem  {\rm 4.1} and Lemma {\rm 4.2 in \cite{Alpert})} obtained the following  formula:
\begin{equation}\label{znuschform}
\begin{split}
&z\frac{ H^{(1)'}_\nu(z)}{H_\nu^{(1)}(z)}=\ri z-\frac{1}{2}+\sum\limits_{j=1}^{N_{\nu}}\frac{h_{\nu,j}}{z-h_{\nu,j}}\\
&\quad  -\frac{1}{\pi \ri}\int_0^{\infty}\frac{\pi\cos(\nu\pi)}{\cos^2(\nu\pi)K_{\nu}^2(r) +(\pi
I_{\nu}(r)+\sin(\nu\pi)K_{\nu}(r))^2} \frac 1 {\ri r+z} dr,
\end{split}
\end{equation}
for any $\nu\not=n+1/2,$ where
$h_{\nu,1},h_{\nu,2},\cdots,h_{\nu,N_{\nu}}$ are zeros of
$H_\nu^{(1)}(z),$ which number $N_\nu.$  Interestingly,
\eqref{sigmnt2d} can be derived from  \eqref{znuschform}, which is
justified in Appendix  {\rm \ref{justappB}}. \qed
\end{remark}

\begin{remark} \label{ksweiomeg}
 Thanks to  \eqref{nnectoon}, we obtain  from  Theorem {\rm  \ref{theorem1}} the
 expression of $\omega_\nu(t):$
\begin{itemize}
\item for $d=2,$
\begin{equation}\label{sigmnt2d2s}
\omega_n(t)=-\frac c {2b}+\frac c b \bigg\{ \sum_{j=1}^{M_n}\big(
e^{ct z_{j}^n/b}-1\big) +(-1)^n \int_0^{\infty}\frac{ 1- e^{-ctr/b}}
{r\{K^2_{n}(r)+\pi^2I^2_{n}(r)\}}dr\bigg\},
\end{equation}
\item  for $d=3,$
\begin{equation}\label{sigmnt2d2sw}
\omega_\nu(t)=-\frac c {b}+\frac c b
\sum_{j=1}^{M_{\nu}}\big(e^{ctz_{j}^{\nu}/b}-1\big),\quad
\nu=n+ 1/2.  \qed
\end{equation}
\end{itemize}
 \end{remark}

\subsection{Computation of the  improper integral in \eqref{sigmnt2d}}\label{impComLab}
The computation of the two-dimensional NRBK requires to evaluate the
improper integral involving the kernel function:
\begin{equation}\label{KnInrela}
W_n(r):=\frac 1  {K_n^2(r)+\pi^2 I_n^2(r)}:=\frac 1 {G_n(r)},\quad n\ge 0,\;\; r>0,
\end{equation}

whose important properties are characterized below.
\begin{lemma}\label{wnkernl}  For any $n\ge 0$ and any real $r>0,$   we have %
\begin{itemize}
\item[(i)]      $G_n(r)$  is a convex function of $r,$  and $W_n(r)$ attains its maximum at a unique  point.

\item[(ii)]   For large $n,$  we have the uniform asymptotic estimate:
 \begin{equation}\label{asfun}
W_n(n\kappa) \sim \frac{n\sqrt{1+\kappa^2}} \pi  {\rm sech}(2n\Theta):=\widetilde W_n(n\kappa),
\end{equation}
for $\kappa>0,$ where
\begin{equation}\label{Thetakappa}
 \Theta=\Theta(\kappa):=\sqrt{1+\kappa^2}+\ln \frac \kappa {1+\sqrt {1+\kappa^2}}.
\end{equation}
 Approximately,  the maximum value of $W_n(r)$ attains at $r=na$ with $a\approx 0.66274$ being the root  of $\Theta,$ and the maximum value is approximately $n\sqrt{1+a^2}/\pi\approx 0.38187n.$
 \end{itemize}
\end{lemma}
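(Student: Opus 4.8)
\emph{Strategy.} The plan is to reduce both parts to standard facts about the modified Bessel functions: for $n\ge 0$ and $r>0$, both $K_n(r)$ and $\pi I_n(r)$ solve the modified Bessel equation $y''+r^{-1}y'-(1+n^2/r^2)y=0$, their Wronskian is $K_n(r)I_n'(r)-K_n'(r)I_n(r)=r^{-1}$, and they obey the uniform large-order (Debye) asymptotics recalled below.

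\emph{Uniqueness of the maximiser in (i).} Differentiate $G_n=K_n^2+\pi^2I_n^2$ twice and use the ODE to eliminate the second derivatives; this gives
\[
(rG_n')'=2r\{(K_n')^2+\pi^2(I_n')^2+(1+n^2/r^2)G_n\}>0 ,
\]
so $rG_n'(r)$ is strictly increasing on $(0,\infty)$. From the small- and large-argument behaviour of $K_n,I_n$ one checks $rG_n'(r)\to-\infty$ as $r\to0^+$ and $rG_n'(r)\to+\infty$ as $r\to\infty$; hence $G_n'$ has exactly one zero $r_n^{*}$, is negative on $(0,r_n^{*})$ and positive on $(r_n^{*},\infty)$. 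Therefore $G_n$ is strictly decreasing then strictly increasing, and $W_n=1/G_n$ attains its maximum at the single point $r_n^{*}$.

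\emph{Convexity in (i).} The same computation gives $G_n''=2\{(K_n')^2+\pi^2(I_n')^2\}+2(1+n^2/r^2)G_n-G_n'/r$, which is manifestly positive wherever $G_n'\le0$, i.e.\ on $(0,r_n^{*}]$. On $(r_n^{*},\infty)$ I would feed the Wronskian into Lagrange's identity to obtain the exact relation $\{(K_n')^2+\pi^2(I_n')^2\}G_n=\tfrac14(G_n')^2+\pi^2r^{-2}$; multiplying the formula for $G_n''$ by $G_n>0$ then yields
\[
G_nG_n''=\tfrac12(G_n')^2-\tfrac1r G_nG_n'+2(1+n^2/r^2)G_n^2+2\pi^2r^{-2}.
\]
Regarded as a quadratic in $G_n'$, the right-hand side has discriminant $r^{-2}\{(1-4n^2-4r^2)G_n^2-4\pi^2\}$, which is strictly negative as soon as $n^2+r^2\ge\tfrac14$; this holds for all $r>0$ when $n\ge1$ and for $r\ge\tfrac12$ when $n=0$, so $G_n''>0$ there. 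The only remaining case is $n=0$ on a fixed compact interval $r_0^{*}<r<\tfrac12$, where $G_0''>0$ is to be verified by hand from sharp two-sided bounds on $K_0,K_1,I_0,I_1$. I expect this last compact case to be the only genuine obstacle; everything else is mechanical bookkeeping.

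\emph{Large-order asymptotics (ii).} Here I would insert the uniform asymptotic expansions for Bessel functions of large order (Formulas 9.7.7 and 9.7.8 of \cite{Abr.S84}): uniformly in $\kappa>0$,
\[
I_n(n\kappa)\sim\frac{e^{n\Theta}}{\sqrt{2\pi n}\,(1+\kappa^2)^{1/4}},\qquad
K_n(n\kappa)\sim\sqrt{\frac{\pi}{2n}}\;\frac{e^{-n\Theta}}{(1+\kappa^2)^{1/4}},
\]
with $\Theta=\Theta(\kappa)$ exactly the function in \eqref{Thetakappa}. Squaring and adding,
\[
G_n(n\kappa)=K_n^2(n\kappa)+\pi^2I_n^2(n\kappa)\sim\frac{\pi}{2n\sqrt{1+\kappa^2}}\bigl(e^{2n\Theta}+e^{-2n\Theta}\bigr)=\frac{\pi\cosh(2n\Theta)}{n\sqrt{1+\kappa^2}},
\]
uniformly in $\kappa$, and taking reciprocals gives \eqref{asfun}. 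Finally, ${\rm sech}(2n\Theta)$ is largest where $\Theta=0$, i.e.\ at $\kappa=a$ — equivalently $\sqrt{1+a^2}=t_0$ with $\coth t_0=t_0$ — so $\widetilde W_n$ peaks at $r=na$ with value $n\sqrt{1+a^2}/\pi=nt_0/\pi\approx0.38187\,n$. The only delicate point is the uniformity of the two-term estimate across the transition layer $\Theta=0$, where the two exponentials are comparable; away from it one term dominates and the statement degenerates to the classical one-term asymptotics of $K_n$ or of $I_n$ alone.
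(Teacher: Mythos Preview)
Your treatment of part (ii) coincides with the paper's: both simply substitute the Debye expansions $I_n(n\kappa)\sim e^{n\Theta}/[\sqrt{2\pi n}(1+\kappa^2)^{1/4}]$ and $K_n(n\kappa)\sim \sqrt{\pi/(2n)}\,e^{-n\Theta}/(1+\kappa^2)^{1/4}$ into $G_n$ and invert.

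For part (i), however, you have taken a substantially harder road than necessary and left yourself a genuine gap. The paper's argument for convexity is one line: since $K_n>0$ and $K_n'<0$, the Bessel ODE $r^2K_n''+rK_n'-(r^2+n^2)K_n=0$ gives $K_n''>0$; and $I_n''>0$ is read off directly from the power series $I_n(r)=\sum_{k\ge 0} (r/2)^{n+2k}/[k!(n+k)!]$, whose coefficients are all positive. Then
\[
G_n''=2(K_n')^2+2K_nK_n''+2\pi^2(I_n')^2+2\pi^2 I_nI_n''>0
\]
for every $n\ge 0$ and every $r>0$, with no case distinctions. Convexity plus $G_n(0^+)=G_n(+\infty)=+\infty$ then gives the unique minimiser, hence the unique maximiser of $W_n$.

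Your route via the Lagrange/Wronskian identity and the discriminant is correct where it applies, and your separate unimodality argument through $(rG_n')'>0$ is neat, but the residual case $n=0$, $r_0^*<r<\tfrac12$ that you flag as ``to be verified by hand'' is not a cosmetic detail: it is an actual hole in the proof of the convexity claim. The paper's observation that $K_n''>0$ and $I_n''>0$ individually closes this hole without any numerics and makes the whole discriminant computation unnecessary.
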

\begin{proof} (i).  We find from Page 374 of \cite{Abr.S84} that   for a given $n,$ $K_n(r), I_n(r)>0, $  and    $K_n(r)$ (resp. $I_n(r)$) is monotonically  descending (resp. ascending) with respect to $r.$  From the series representation (see Formula 9.6.10  of \cite{Abr.S84}):
\[
I_n(r)=\frac{r^n}{ 2^n}\sum_{k=0}^\infty \frac{r^{2k}}{2^{2k} k! (n+k)!},\quad n\ge 0,
\]
we conclude that $I_n''(r)>0.$ Moreover, since $K_n(r)$ satisfies (see Formula 9.6.1  of \cite{Abr.S84})
\[
r^2 K_n''(r)+r K_n'(r)-(r^2+n^2) K_n(r)=0,
\]
we have $K_n''(r)>0.$ Therefore,  a direct calculation shows that  $G_n''(r)>0,$
so $G_n(r)$ is convex.

One verifies  that $G_n(0+)=G_n(+\infty)=+\infty$ for all $n,$ which
follows from \eqref{asympt}, and the asymptotic
properties (see  \cite{Abr.S84} again):
\begin{equation}\label{asymptab}
\begin{split}
K_\nu (r)\sim \begin{cases}
-\ln r,\;\; &{\rm if}\;\; \nu=0,\\[7pt]
\dfrac{\Gamma(\nu)} {2} \Big(\dfrac r 2\Big)^{-\nu},\;\; &{\rm
if}\;\; \nu>0,
\end{cases}
\end{split}
\qquad
I_\nu (r)\sim \frac 1 {\Gamma(\nu+1)} \Big(\frac r 2\Big)^{\nu}, \;\; \nu\ge 0,
\end{equation}
for $0<r\ll 1,$ 
and  \begin{equation}\label{asymptabc}
I_\nu(r)\sim \sqrt{\frac 1 {2\pi r}} e^r,\quad r\gg1,\;\; \nu\ge 0.
\end{equation}
Since $G_n(r)$ is convex,     $G_n(r)$ attains its  minimum  at a unique   point $r_0.$
Thanks to    $G_n'(r)=-W_n'(r)/W_n^2(r),$ $W_n(r)$ has a unique maximum at the same point $r_0.$

(ii).  Recall that for large $n$ (see
Formulas (9.7.7)-(9.7.11) of \cite{Abr.S84}):
\begin{equation}\label{constestt}
\begin{split}
&K_n(n\kappa)\sim \sqrt{\frac \pi {2n}}\frac{e^{-n\Theta}}{(1+\kappa^2)^{1/4}}, \qquad\qquad   I_n(n\kappa)\sim \frac 1 {\sqrt{2\pi n}} \frac{e^{n\Theta}}{(1+\kappa^2)^{1/4}},\\
& K_n'(n\kappa)\sim -\sqrt{\frac \pi {2n}}
\frac{(1+\kappa^2)^{1/4}} \kappa
{e^{-n\Theta}} ,\quad  I_n'(n\kappa)\sim \frac 1 {\sqrt{2\pi n}} \frac{(1+\kappa^2)^{1/4}} \kappa {e^{n\Theta}},
\end{split}
\end{equation}
which, together with \eqref{KnInrela},  leads to the asymptotic estimate \eqref{asfun}.
Thus,  the maximum value of $W_n(r)$  approximately attains at the unique root  of  $\Theta(\kappa),$
 which turns out to be $a\approx 0.66274$ as in Figure \ref{Figzerodistr}, and the maximum value is about $n\sqrt{1+a^2}/\pi\approx 0.38187n.$
 \end{proof}

\begin{figure}[!ht]
\includegraphics[width=0.98\textwidth,height=0.35\textwidth]{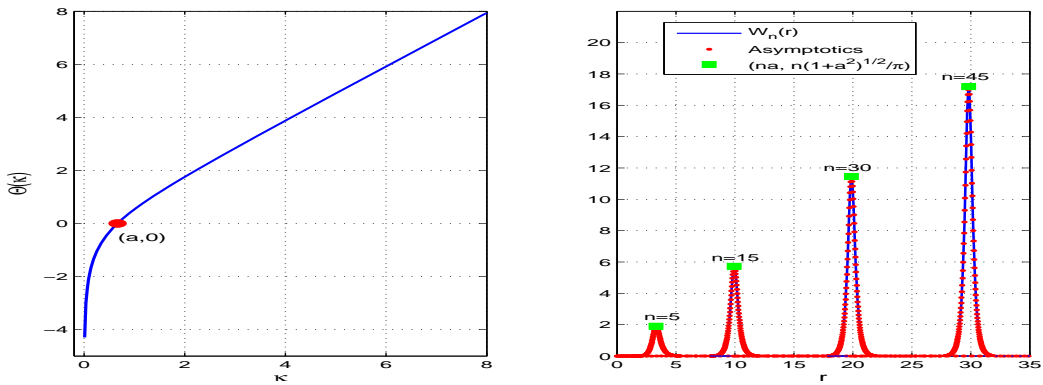}
\caption{\small Left: graph of $\Theta(\kappa), \kappa\in (0,8]$ defined in
(\ref{Thetakappa}). Right: $W_n$ against the asymptotic estimate $\widetilde W_n$ (``{\color{red}$\bullet$}") for samples of $n=5,15, 30, 45.$}\label{FigWnrfig}
\end{figure}

We depict in Figure \ref{FigWnrfig} (left) the graph of $\Theta(\kappa)$, and  highlight the zero point $(a,0).$ Observe that $\Theta(\kappa)$ grows like $\kappa,$ when $\kappa>a.$
We also plot in Figure \ref{FigWnrfig} (right) several sample graphs of  $W_n$ (solid lines) and the asymptotic estimate
$\widetilde W_n$ (``{\color{red}$\bullet$}") for $n=5,15,30,45,$ and particularly mark the asymptotic point  $(na, n\sqrt{1+a^2}/\pi)$
of the maximum of $W_n(r)$ obtained in Lemma \ref{wnkernl} (ii). Observe that  even for small
 $n,$  the asymptotic estimate provides a very accurate  approximation  of $W_n.$
These properties  greatly
facilitate the computation of the improper integral. Indeed, we can
truncate  $(0, \infty)$ to a narrow interval (of length about $20$ for $n\ge 5$ in our numerical computations in Section \ref{sect5})
around the point $r=na.$

\subsection{Rapid  evaluation of the temporal  convolution}\label{Sect:rapideva}  Remarkably,  the presence of the time variable $t$
in the exponentials in \eqref{sigmnt2d} and \eqref{sigmnt3d}
allows us to eliminate the burden of history dependence of the temporal convolution easily.
 More precisely,  given a function  $g(t),$ we define
 \begin{equation*}\label{recurdeltabc}
f(t;r):=e^{-ctr/b}\ast g(t)=\int_0^t e^{-c(t-\tau)r/b} g(\tau) d\tau.
\end{equation*}
One verifies readily that
\begin{equation}\label{recurdeltab}
\begin{split}
 f(t+\Delta t; r)=e^{-c\Delta t r/b} f(t; r)+\int_{t}^{t+\Delta t}e^{-c(t+\Delta t-\tau)r/b} g(\tau) d\tau,
\end{split}
\end{equation}
so $f(t;r)$ can  march in  $t$ with step size $\Delta t$  recursively.
This enables us to compute the time convolution rapidly. For example, in the 2-D case,
\begin{equation}\label{recurdelta}
\begin{split}
[\sigma_n\ast g](t)&=\int_0^t \sigma_n(t-\tau)g(\tau) d\tau=\frac c {b^2}
\sum_{j=1}^{M_n}z_j^n \int_0^t e^{c(t-\tau)z_j^n/b}g(\tau)d\tau \\
&\quad + \frac{(-1)^n c}{b^2} \int_0^\infty \frac 1 {K_n^2(r)+\pi^2 I_n^2(r)}\Big[\int_0^t e^{-c(t-\tau)r/b} g(\tau) d\tau\Big]dr\\
&=\frac c {b^2} \sum_{j=1}^{M_n}z_j^n f(t;-z_j^n)+\frac{(-1)^n c}{b^2} \int_0^{+\infty} {f(t; r)} W_n(r)  dr.
\end{split}
\end{equation}
Thanks to \eqref{recurdeltab}, $[\sigma_n\ast g](t+\Delta t)$ can be computed  recursively from the previous step and the history dependence is then narrowed down to $[t,t+\Delta t].$
We also refer to Subsection \ref{Sect:Newmark} for more detailed discussions.

\vskip 10pt
\section{A priori estimates}
\setcounter{equation}{0}

In this section, we analyze the well-posedness of the
 truncated problem \eqref{eq1.1ab}-\eqref{eq1.1ce}, and
 provide  {\em a priori} estimates for  its solution.

We first recall the Plancherel or Parseval results for the Laplace transform.
 \begin{lemma}\label{ParIden}  Let $s=s_1+\ri s_2$ with  $s_1, s_2\in {\mathbb R}.$ If $f,g$ are Laplace transformable,  then
 \begin{equation}\label{Pariddd}
 \frac 1 {2\pi}\int_{-\infty}^{+\infty} {\mathcal L}[f](s)\, {\mathcal L}[\bar g](s)\, ds_2=\int_0^{+\infty} e^{-2s_1 t} f(t) \bar g(t) dt,\quad  \forall\, s_1>\gamma,
 \end{equation}
  where   $\gamma$ is  the absissa of convergence for both $f$ and $g$,   and   $\bar g$ is the complex conjugate of $g.$
 \end{lemma}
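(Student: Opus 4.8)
The plan is to prove the Plancherel/Parseval identity for the Laplace transform by reducing it to the classical Plancherel theorem for the Fourier transform. The key observation is that the Laplace transform along the vertical line $\mathrm{Re}(s)=s_1$ is essentially the Fourier transform of the function $t\mapsto e^{-s_1 t}f(t)$ extended by zero to the negative reals. First I would fix $s_1>\gamma$ and introduce the auxiliary functions $f_1(t):=e^{-s_1 t}f(t)\,\mathbf 1_{t>0}$ and $g_1(t):=e^{-s_1 t}g(t)\,\mathbf 1_{t>0}$. Because $s_1$ exceeds the abscissa of convergence, the defining integrals $\mathcal L[f](s)=\int_0^\infty e^{-st}f(t)\,dt$ converge absolutely for $\mathrm{Re}(s)=s_1$, which shows $f_1,g_1\in L^1(\mathbb R)$; under the mild additional assumption that they also lie in $L^2(\mathbb R)$ (automatic in the applications here, where $f,g$ arise as smooth solution traces with exponential-type bounds), the Fourier transforms are well defined in both senses.

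Next I would record the identity $\mathcal L[f](s_1+\mathrm i s_2)=\int_{\mathbb R} e^{-\mathrm i s_2 t} f_1(t)\,dt=\widehat{f_1}(s_2)$, with the analogous statement for $g$. Since $\mathcal L[\bar g](s_1+\mathrm i s_2)=\int_0^\infty e^{-s_1 t}e^{-\mathrm i s_2 t}\overline{g(t)}\,dt=\int_{\mathbb R} e^{-\mathrm i s_2 t}\overline{g_1(t)}\,dt$, and since $\overline{g_1(t)}$ has Fourier transform $\overline{\widehat{g_1}(-s_2)}$, one gets $\mathcal L[\bar g](s_1+\mathrm i s_2)=\overline{\widehat{g_1}(s_2)}$ after the change of variable $s_2\mapsto -s_2$ inside the conjugate, or more directly by noting $\int_{\mathbb R}e^{-\mathrm i s_2 t}\overline{g_1(t)}\,dt=\overline{\int_{\mathbb R}e^{\mathrm i s_2 t}g_1(t)\,dt}=\overline{\widehat{g_1}(-s_2)}$; the two conventions are reconciled by the symmetry of the integral over all of $\mathbb R$. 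Then the left-hand side of \eqref{Pariddd} becomes $\frac1{2\pi}\int_{-\infty}^{\infty}\widehat{f_1}(s_2)\,\overline{\widehat{g_1}(s_2)}\,ds_2$, which by the classical Plancherel identity for the Fourier transform equals $\int_{\mathbb R} f_1(t)\,\overline{g_1(t)}\,dt=\int_0^\infty e^{-2s_1 t}f(t)\,\overline{g(t)}\,dt$, exactly the right-hand side.

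The step needing the most care is the bookkeeping of complex conjugates and sign conventions in the Fourier transform, since the statement pairs $\mathcal L[f]$ with $\mathcal L[\bar g]$ rather than $\overline{\mathcal L[g]}$; one must verify that $\mathcal L[\bar g](s_1+\mathrm i s_2)=\overline{\mathcal L[g](s_1-\mathrm i s_2)}$ and that integrating $s_2$ over all of $\mathbb R$ makes the reflection $s_2\mapsto -s_2$ harmless, so that the product under the integral is genuinely $\widehat{f_1}\,\overline{\widehat{g_1}}$ pointwise up to this reflection. A secondary technical point is justifying that $f_1,g_1\in L^2$: this follows because for $s_1>\gamma$ one may write $f_1(t)=e^{-(s_1-\gamma')t}\big(e^{-\gamma' t}f(t)\big)$ with $\gamma<\gamma'<s_1$, where the parenthesized factor is bounded (indeed in $L^1$) and the prefactor is a decaying exponential, hence square integrable; the same argument applies to $g_1$, and it also legitimizes all interchanges of integration via Fubini. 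Once these two points are settled, the rest is a direct appeal to the Fourier Plancherel theorem, so I do not expect any substantive obstacle beyond the conjugation bookkeeping.
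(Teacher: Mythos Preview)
Your overall strategy---reduce the Laplace transform on the line $\mathrm{Re}(s)=s_1$ to the Fourier transform of $e^{-s_1 t}f(t)\mathbf 1_{t>0}$ and invoke the classical Plancherel theorem---is the standard one, and it is exactly what the references cited in the paper's one-line proof (Cohen, Davies) carry out. In that sense your approach matches the paper's.

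However, the step you flag as ``needing the most care'' actually fails, and not for a technical reason: the identity as literally printed is false. You correctly compute $\mathcal L[\bar g](s_1+\ri s_2)=\overline{\widehat{g_1}(-s_2)}$, so the integrand on the left of \eqref{Pariddd} is $\widehat{f_1}(s_2)\,\overline{\widehat{g_1}(-s_2)}$. Your claim that ``integrating $s_2$ over all of $\mathbb R$ makes the reflection $s_2\mapsto -s_2$ harmless'' is not right: the substitution $s_2\mapsto -s_2$ turns the integrand into $\widehat{f_1}(-s_2)\,\overline{\widehat{g_1}(s_2)}$, not $\widehat{f_1}(s_2)\,\overline{\widehat{g_1}(s_2)}$, because the reflection hits both factors. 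A concrete counterexample: take $f=g=e^{-t}$ on $t>0$, so $\mathcal L[f](s)=\mathcal L[\bar g](s)=(s+1)^{-1}$; then $\frac{1}{2\pi}\int_{\mathbb R}(s_1+1+\ri s_2)^{-2}\,ds_2=0$ (the antiderivative $\ri/(s_1+1+\ri s_2)$ vanishes at $\pm\infty$), while the right-hand side is $\frac{1}{2(s_1+1)}\neq 0$.

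The resolution is that \eqref{Pariddd} contains a typo: the second factor should be $\overline{\mathcal L[g](s)}$, not $\mathcal L[\bar g](s)$. With that correction your argument goes through verbatim, and indeed the paper's own application in the proof of Theorem~\ref{fgprops} uses the corrected form (it produces $|\mathcal L[\tilde v](s)|^2$, which is $\mathcal L[\tilde v](s)\,\overline{\mathcal L[\tilde v](s)}$). So your instinct that the conjugation bookkeeping is the delicate point was right; you just should have trusted your computation rather than waving the discrepancy away.
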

  \begin{proof}  This identity  can be proved by following the same lines as for  (2.46) in \cite{CohenLap2007}
   (or \cite{DaviesBrian2002}).
  \end{proof}

For notational convenience, we introduce the modified spherical Bessel function (cf. \cite{watson}):
\begin{equation}\label{akasympc}
k_{n}(z)=\sqrt{\frac 2 {\pi z}} K_{n+1/2}(z)\;\; \Rightarrow \;\;  \frac{k_n'(z)}{k_n(z)}=-\frac 1 {2z}+\frac{K_{n+1/2}'(z)}{K_{n+1/2}(z)},
\end{equation}
and by \eqref{kernalw},
\begin{equation}\label{timeasddtcd}
\omega_{n+1/2}(t)=\mathcal{L}^{-1}\left[1+\frac{k_n'
(sb/c)}{k_n(sb/c)}\right](t),\quad n\ge 0.
\end{equation}
The following properties are also indispensable  for the analysis.
\begin{lemma}\label{kenpos} Let $s=s_1+\ri s_2$ with  $s_1, s_2\in {\mathbb R}.$ Then for any $s_1>0,$
\begin{equation}\label{knposi}
{\rm (i)}\; {\rm Re}\Big(s\frac{Z_n'(sb/c)} {Z_n(sb/c)}\Big)\le 0, \;\;  {\rm Re}\Big(\frac{Z_n'(sb/c)} {Z_n(sb/c)}\Big)\le 0; \;\;   {\rm (ii)}\; {\rm Im}\Big(s\frac{Z_n'(sb/c)} {Z_n(sb/c)}\Big)\le 0,\;\; \forall s_2\ge 0,
\end{equation}
where $Z_n(z)=K_n(z)$ or $k_n(z).$
\end{lemma}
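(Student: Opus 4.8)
The plan is to reduce everything to known integral representations and monotonicity properties of the modified Bessel functions $K_n$ and $k_n$. Recall that for $\mathrm{Re}(z)>0$ both $K_\nu(z)$ and the modified spherical Bessel function $k_n(z)$ are nonvanishing (by Lemma \ref{lemma2}(i), the zeros of $K_\nu$ all have negative real part, and $k_n$ differs from $K_{n+1/2}$ only by the nonvanishing factor $\sqrt{2/(\pi z)}$), so the ratios $Z_n'/Z_n$ are analytic on the right half-plane and the statement makes sense. The key analytic input I would use is the classical formula for the logarithmic derivative of a Bessel-type function in terms of its zeros together with an integral over a branch cut; concretely, the identity \eqref{znuschform} of Alpert et al., transported from $H^{(1)}_\nu$ to $K_\nu$ via the rotation \eqref{khrela} $K_\nu(z)=\tfrac{\pi\ri}{2}e^{\nu\pi\ri/2}H^{(1)}_\nu(\ri z)$. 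This gives, for $z$ in the right half-plane,
\begin{equation*}
z\frac{K_\nu'(z)}{K_\nu(z)}=-z-\frac12+\sum_{j=1}^{M_\nu}\frac{z\,z_j^\nu}{z-z_j^\nu}-\frac{(-1)^n}{\pi}\int_0^\infty \frac{z}{z+r}\,\frac{dr}{K_n^2(r)+\pi^2 I_n^2(r)}
\end{equation*}
for $\nu=n$ (with the branch-cut integral absent when $\nu=n+1/2$, since then $\cos(\nu\pi)=0$). One can also get this directly from Theorem \ref{theorem1}: since $\sigma_\nu(t)=\mathcal L^{-1}[\tfrac{s}{c}+\tfrac1{2b}+\tfrac{s}{c}\tfrac{K_\nu'}{K_\nu}]$ and Theorem \ref{theorem1} exhibits $\sigma_\nu$ as a sum of decaying real exponentials (with weights $z_j^\nu$, which by Lemma \ref{lemma2} have negative real part) plus, in 2D, a positive superposition of decaying exponentials $e^{-ctr/b}$ with the nonnegative density $(-1)^n W_n(r)$, re-applying the Laplace transform recovers exactly the partial-fraction-plus-integral representation above.

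With that representation in hand, (i) and (ii) become a sign bookkeeping exercise. For each pole term, set $z_j^\nu=-p+\ri q$ with $p>0$ (Lemma \ref{lemma2}(i)); then $\mathrm{Re}\big(\tfrac{s z_j^\nu}{s-z_j^\nu}\big)=\mathrm{Re}\int_0^\infty z_j^\nu e^{(z_j^\nu - s)\tau}s\,d\tau$ — more cleanly, I would avoid the pole form and argue straight from Theorem \ref{theorem1}: writing $\omega_\nu'=c\sigma_\nu$ and using \eqref{kernalw},
\[
\frac{Z_n'(sb/c)}{Z_n(sb/c)}=\mathcal L\!\big[\omega_{\nu}\big](s)\cdot\frac{c}{b}\ \text{(up to the explicit added terms)},
\]
so that $s\,Z_n'/Z_n$ and $Z_n'/Z_n$ are, modulo elementary terms, Laplace transforms (or $s$ times Laplace transforms) of the explicit kernels in \eqref{sigmnt2d}-\eqref{sigmnt2d2sw}. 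Since each of those kernels is, after the sign $(-1)^n$ is absorbed, a nonnegative combination of real exponentials $e^{-\alpha t}$ with $\alpha>0$ (for the pole part $\alpha=-c\,\mathrm{Re}(z_j^\nu)/b>0$ with an oscillatory factor $e^{\ri c\,\mathrm{Im}(z_j^\nu)t/b}$), the required inequalities follow from the elementary computation
\begin{equation*}
\mathrm{Re}\int_0^\infty e^{-st}e^{(-\alpha+\ri\beta)t}\,dt=\frac{s_1+\alpha}{(s_1+\alpha)^2+(s_2-\beta)^2}>0,
\quad
\mathrm{Im}\big(\cdot\big)=\frac{-(s_2-\beta)}{(s_1+\alpha)^2+(s_2-\beta)^2},
\end{equation*}
summed (or integrated) against nonnegative weights, and similarly with the extra factor $s$. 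The only subtlety is the sign of $s_2-\beta$: for the pole part $\beta=c\,\mathrm{Im}(z_j^\nu)/b$ can be negative (zeros occur in conjugate pairs straddling the real axis, Lemma \ref{lemma2}(i)), so one pairs $z_j^\nu$ with $\overline{z_j^\nu}$ to combine $\beta$ and $-\beta$; for $s_2\ge 0$ the sum $\tfrac{-(s_2-\beta)}{\cdots}+\tfrac{-(s_2+\beta)}{\cdots}\le 0$ is checked by a one-line estimate, and the branch-cut integral (which is a genuine Laplace transform of a nonnegative measure, $\beta=0$) contributes $\tfrac{-s_2}{s_1^2+s_2^2}\le 0$ outright.

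The main obstacle I anticipate is not any single inequality but getting the representation cleanly and uniformly in $n$ — in particular justifying the passage from \eqref{znuschform} to $K_\nu$, handling the separate cases $\nu=n$ (branch cut present, sign $(-1)^n$) and $\nu=n+1/2$ (no branch cut, and $k_n$ rather than $K_{n+1/2}$, which only shifts the logarithmic derivative by $-1/(2z)$ as in \eqref{akasympc}, a term whose real and imaginary parts have the right signs on the right half-plane), and checking convergence of the pole sum and the improper integral so that the interchange of $\mathrm{Re}/\mathrm{Im}$ with summation/integration is legitimate. Once the representation is nailed down, the sign analysis in the previous paragraph is routine, and the case $Z_n=k_n$ follows from the case $Z_n=K_{n+1/2}$ by adding the manifestly sign-correct correction $-1/(2z)$.
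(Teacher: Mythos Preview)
Your approach is genuinely different from the paper's, and it has a real gap.

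The paper does \emph{not} use the explicit kernel formula from Theorem \ref{theorem1} at all. For $Z_n=K_n$ it simply cites Chen \cite{Chen.ZM2009}. For $Z_n=k_n$ it runs a PDE energy argument: take the exterior Helmholtz problem \eqref{eq1.2}, multiply by $\bar u$, integrate over $B_\rho\setminus\bar B$, and take the imaginary part; plugging in the single-mode solution $u=\frac{k_n(sr/c)}{k_n(sb/c)}\hat\psi_{nm}Y_n^m$ and letting $\rho\to\infty$ (using the exponential decay of $k_n$) yields (ii). Part (i) comes from an integral representation (Chen) showing $r\mapsto|K_{n+1/2}(sr)|^2$ is monotone decreasing, hence $\tfrac{d}{dr}|k_n(sr)|^2\le 0$, which is exactly $\mathrm{Re}\big(s\,k_n'/k_n\big)\le 0$; the second inequality of (i) is then algebra from the first and (ii). None of this touches the zeros $z_j^\nu$ or the branch-cut integral.

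The gap in your route is the claim that the kernels are ``nonnegative combinations'' of decaying exponentials and that (i)--(ii) reduce to ``sign bookkeeping''. They are not: in $\sigma_\nu(t)=\tfrac{c}{b^2}\sum_j z_j^\nu e^{ctz_j^\nu/b}+\cdots$ the weights are the complex numbers $z_j^\nu$ themselves, and in 2D the branch-cut term carries the sign $(-1)^n$. If you actually carry out your pairing for (ii), writing $z_j^\nu=-p+\ri q$ and $z=sb/c=x+\ri y$, the imaginary part of a conjugate pair $\tfrac{z_j}{z-z_j}+\tfrac{\bar z_j}{z-\bar z_j}$ comes out \emph{nonnegative} for $y\ge 0$ (numerator $2py[(x+p)^2+y^2+q^2]+4xyq^2\ge 0$), not nonpositive; so the pole sum pushes the wrong way and must be beaten by the single term $-y$ from $-z$. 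Likewise in (i), a pair with $|q|>p$ (which does occur, since the zeros approach the imaginary axis) can give a positive real-part contribution as $x\to 0^+$. The inequalities therefore encode a genuine cancellation among the pole sum, the $-z-\tfrac12$ term, and the $(-1)^n$-signed integral, and your representation does not deliver that cancellation without a further quantitative estimate you have not supplied. The PDE/monotonicity argument sidesteps this entirely by exploiting that $k_n(s\cdot)$ is the decaying radial solution.
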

\begin{proof}
The  results with $Z_n=K_n$ were proved in Chen \cite{Chen.ZM2009}.
We next prove \eqref{knposi}
  with $Z_n=k_n$ by using a similar argument. By
applying Laplace transform to
\eqref{eq1.1a}-\eqref{eq1.1c}, exterior to $D=B$ with
$F=U_0=U_1\equiv 0$ and $G=U|_{r=b},$ and denoting $u={\mathcal L}[U],$ we obtain
\begin{equation}\label{eq1.2}
\begin{split}
& -c^2\Delta u+s^2 u=0,\quad {\rm in}\;\; \Omega_{\rm ext}={\mathbb R}^d\setminus \bar B,\;\;  s\in {\mathbb C},\;\;  {\rm Re}(s)>0;\\
&  u|_{r=b}=\psi=\mathcal{L}[U|_{r=b}]; \quad  c {\partial_r u}+ s
u=o\big(r^{(1-d)/2}\big),\;\;  d=2,3,
 \end{split}
\end{equation}
which admits the series solution
\begin{equation}\label{sersol3d}
u(r,\theta,\phi,s)=\sum_{n=0}^\infty \frac{k_{n}(sr/c)}{k_{n}(sb/c)}
\sum_{|m|=0}^n \hat \psi_{nm}(s) Y_{n}^m(\theta,\phi),
\end{equation}
where $\{\hat \psi_{nm}\}$ are the spherical harmonic expansion
coefficients of $\psi$.  Multiplying the first equation of
\eqref{eq1.2}   by $\bar u$ and integrating over
$\Omega_{b,\rho}:=B_\rho\setminus \bar B,$ where $B_\rho$ is a ball
of radius $\rho>b,$ the imaginary part of the resulting equation
reads
\begin{equation}\label{interimaginary}
2s_1s_2\int_{\Omega_{b,\rho}} |u|^2 d\bs x
-c^2\, {\rm Im} \int_{\partial\Omega_{b,\rho}}\frac{\partial u}{\partial {\bs n}}\bar u\, d\gamma=0,
\end{equation}
where $\bs n$ is the unit outer normal of  $\partial\Omega_{b,\rho}$.

Since $s_1>0,$  multiplying \eqref{interimaginary} by $s_2$ yields
\begin{equation}\label{IntermediaF}
s_2\,{\rm Im} \int_{\{r=b\}} \frac{\partial u}{\partial r}\bar u\, d\gamma
 \le s_2\,{\rm Im} \int_{\{r=\rho\}} \frac{\partial u}{\partial r}\bar u\, d\gamma.
\end{equation}
It is clear that $u=k_n(sr/c)/k_n(sb/c) \hat
\psi_{nm}(s)Y_n^m(\theta,\phi)$ is a solution of  \eqref{eq1.2}, so
using the orthogonality of $\{Y_n^m\},$ we obtain from
\eqref{IntermediaF} that
\begin{equation}\label{Kbnns}
\frac {b} c {\rm Im}\Big(s_2 s \frac{k_n'(sb/c)}{k_n(sb/c)}\Big)|\hat \psi_{nm}|^2 \le \frac {\rho}
c {\rm Im}\Big(s_2 s \frac{k_n'(s\rho/c)}{k_n(sb/c)}\Big)|\hat \psi_{nm}|^2.
\end{equation}
We find from  \eqref{asympt} and \eqref{akasympc} that $k_n'(s\rho/c)$
decays exponentially if $s_1>0,$ so the right hand side of
\eqref{IntermediaF} tends to zero as $\rho\to +\infty.$ Thus,
letting $\rho\to +\infty$ in \eqref{Kbnns} leads to
  \begin{equation}\label{Kbnns2}
  {\rm Im}\Big(s_2 s \frac{k_n'(sb/c)}{k_n(sb/c)}\Big)\le 0.
\end{equation}
If $s_2\ge 0,$  we obtain (ii) in \eqref{knposi}. 
Next, we prove the first inequality of (i) in \eqref{knposi}. Recall
the formula (see Lemma 2.3 of \cite{Chen.ZM2009}):
  \begin{equation*}\label{Kbnns2wwg}
|K_{n+1/2}(sr)|^2=\frac 12 \int_0^{+\infty} e^{-\frac {|s|^2r^2}{2r}-\frac{s^2+\bar s^2}{2|s^2|}} K_{n+1/2}(\tau) \frac {d\tau} \tau,\quad s_1>0,
 \end{equation*}
 which implies that $|K_{n+1/2}(sr)|^2$ is monotonically descending with respect to $r.$ The property $\frac d{dr} |K_{n+1/2}(sr)|^2\le 0,$
  together with \eqref{akasympc}, implies ${\rm Re}\Big(s\frac{k_n'(sb/c)} {k_n(sb/c)}\Big)\le 0$.
Denoting $s \frac{k_n'(sb/c)}{k_n(sb/c)}=\gamma_1+\ri \gamma_2$ with
$\gamma_1,\gamma_2\in {\mathbb R},$  we know from \eqref{Kbnns2}
that $\gamma_1\le 0$ and $s_2\gamma_2\le 0.$ Therefore,
\[
{\rm Re}\Big(\frac{k_n'(sb/c)} {k_n(sb/c)}\Big)={\rm Re}\Big(\frac {\gamma_1+\ri\gamma_2}{s}\Big)
=\frac 1 {|s|^2}(s_1\gamma_1+s_2\gamma_2)\le 0.
\]
This ends the proof.
\end{proof}

With the above preparations,  we can derive the following  important property.
\begin{theorem}\label{fgprops}  For any  $v\in L^2(0,T),$ we have
\begin{equation}\label{fgss}
\int_0^T [\omega_\nu\ast v](t) \bar v(t) dt   \le  \int_0^T|v(t)|^2
dt,\quad \forall\; T>0,\;\; n\ge 0,
\end{equation}
where $\omega_\nu$ is the  NRBK given by  \eqref{kernalw} {\rm(}or
 \eqref{timeasddtcd} for $d=3${\rm)}.
\end{theorem}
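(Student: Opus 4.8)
The plan is to move to the Laplace (frequency) side, where the convolution $\omega_\nu\ast v$ becomes multiplication by the symbol
\[
\Omega_\nu(s):=\mathcal{L}[\omega_\nu](s)=1+\frac{Z_n'(sb/c)}{Z_n(sb/c)},
\]
with, uniformly in $d$, $(\nu,Z_n)=(n,K_n)$ when $d=2$ (by \eqref{kernalw}) and $(\nu,Z_n)=(n+1/2,k_n)$ when $d=3$ (by \eqref{timeasddtcd}). The key structural fact is Lemma \ref{kenpos}(i), which gives ${\rm Re}\,\bigl(Z_n'(sb/c)/Z_n(sb/c)\bigr)\le 0$ whenever ${\rm Re}\,s>0$ (note ${\rm Re}(sb/c)=(b/c)\,{\rm Re}\,s$). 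Combined with the Parseval identity of Lemma \ref{ParIden}, this positivity yields \eqref{fgss}; what remains is to make rigorous the three steps — zero extension of $v$, the Parseval computation on a line ${\rm Re}\,s=s_1>0$, and the limit $s_1\to 0^+$.

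First I would extend $v$ by zero to $(0,\infty)$. Since $(\omega_\nu\ast v)(t)$ for $t\in(0,T)$ depends only on $v|_{(0,t)}$ while $v(t)=0$ for $t>T$, we have $\int_0^T[\omega_\nu\ast v]\bar v\,dt=\int_0^\infty[\omega_\nu\ast v]\bar v\,dt$. A compactly supported $v\in L^2(0,T)$ is Laplace transformable, and $\omega_\nu$ is bounded on $[0,\infty)$: in the explicit formulas of Remark \ref{ksweiomeg} the exponentials $e^{ctz_j^\nu/b}$ decay because ${\rm Re}\,z_j^\nu<0$ (Lemma \ref{lemma2}(i)), and for $d=2$ the branch-cut integral is a bounded function of $t$ with a finite limit. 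Hence $w:=\omega_\nu\ast v$ is bounded, so for each $s_1>0$ the functions $e^{-s_1t}w$ and $e^{-s_1t}v$ lie in $L^2(0,\infty)$, their Laplace transforms restricted to $\{{\rm Re}\,s=s_1\}$ belong to $L^2(\mathbb{R})$ (being Fourier transforms of $L^2$ functions), and $\mathcal{L}[w]=\Omega_\nu\,\mathcal{L}[v]$ there by the convolution theorem. These are exactly the hypotheses under which Lemma \ref{ParIden} applies.

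Second, for fixed $s_1>0$ I would apply Lemma \ref{ParIden} with $f=w,\ g=v$ (using the convolution theorem), and again with $f=g=v$; writing $s=s_1+\ri s_2$, this yields
\[
\int_0^\infty e^{-2s_1t}w(t)\bar v(t)\,dt=\frac{1}{2\pi}\int_{-\infty}^{+\infty}\Bigl(1+\frac{Z_n'(sb/c)}{Z_n(sb/c)}\Bigr)\bigl|\mathcal{L}[v](s)\bigr|^2\,ds_2
\]
together with $\frac{1}{2\pi}\int_{-\infty}^{+\infty}|\mathcal{L}[v](s)|^2\,ds_2=\int_0^\infty e^{-2s_1t}|v(t)|^2\,dt$. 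Taking real parts in the displayed identity and discarding the nonpositive term ${\rm Re}(Z_n'/Z_n)$ by Lemma \ref{kenpos}(i) gives
\[
{\rm Re}\int_0^\infty e^{-2s_1t}[\omega_\nu\ast v](t)\bar v(t)\,dt\le\int_0^\infty e^{-2s_1t}|v(t)|^2\,dt,\qquad\forall\,s_1>0.
\]
Finally I would let $s_1\to 0^+$: on the left $e^{-2s_1t}[\omega_\nu\ast v](t)\bar v(t)\to[\omega_\nu\ast v](t)\bar v(t)$ pointwise and is dominated by $\bigl|[\omega_\nu\ast v]\,\bar v\bigr|\in L^1(0,\infty)$ (as $\omega_\nu\ast v$ is bounded and $v\in L^1$ has compact support), so dominated convergence applies; on the right $e^{-2s_1t}|v(t)|^2\uparrow|v(t)|^2$, so monotone convergence applies. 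Combined with the first step this gives ${\rm Re}\int_0^T[\omega_\nu\ast v]\bar v\,dt\le\int_0^T|v|^2\,dt$, which is \eqref{fgss}; when $v$ is real-valued the real part is redundant, the left-hand side being already real since the $z_j^\nu$ occur in conjugate pairs.

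I do not expect a genuine conceptual obstacle, the only nontrivial ingredient — the sign condition for $Z_n'/Z_n$ — being supplied by Lemma \ref{kenpos}. The care lies in the bookkeeping of the second step (checking that $\omega_\nu$, $v$ and $\omega_\nu\ast v$ have abscissae of convergence small enough for the convolution theorem and Lemma \ref{ParIden} to apply on $\{{\rm Re}\,s=s_1\}$) and in keeping $s_1$ strictly positive until the very end, so that $Z_n'/Z_n$ is never evaluated on the imaginary axis.
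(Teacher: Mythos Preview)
Your proposal is correct and follows essentially the same route as the paper: zero-extend $v$ to $(0,\infty)$, apply the Parseval identity of Lemma~\ref{ParIden} on a line ${\rm Re}\,s=s_1>0$ together with the convolution theorem, discard the term ${\rm Re}\bigl(Z_n'(sb/c)/Z_n(sb/c)\bigr)\le 0$ via Lemma~\ref{kenpos}(i), and pass to the limit $s_1\to 0^+$. The only cosmetic difference is in justifying this last limit: the paper rewrites $|\mathcal{L}[\tilde v](s)|^2=|s|^2\bigl|\mathcal{L}[\int_0^t\tilde v]\bigr|^2$ and argues analyticity of the symbol, whereas you argue boundedness of $\omega_\nu$ directly from Remark~\ref{ksweiomeg} and invoke dominated convergence on the time side---both are valid.
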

\begin{proof}  Let $\tilde v=v \, {\bf 1}_{_{[0,T]}},$ where  ${\bf 1}_{_{[0,T]}}$ is the characteristic function of $[0,T].$
Then we obtain from  \eqref{Pariddd}  that for $d=3,$
\begin{equation*}\label{Lulv}
\begin{split}
 \int_0^T e^{-2s_1 t}  & [\omega_{n+1/2}\ast v](t) \bar v(t) dt  = \int_0^{+\infty} e^{-2s_1 t}  [\omega_{n+1/2}\ast \tilde v](t)  \bar {\tilde v}(t) dt\\
& =\frac 1 {2\pi} \int_{-\infty}^{+\infty} \Big[ \frac{k_n'(sb/c)}
{k_n(sb/c)}+1\Big] \big|{\mathcal L}[\tilde v](s)\big|^2 d s_2
\\&=
\frac 1 {2\pi} \int_{-\infty}^{+\infty}
\frac{k_n'(sb/c)} {k_n(sb/c)} \big|{\mathcal L}[\tilde v](s)\big|^2 d s_2
 + \frac 1 {2\pi} \int_{-\infty}^{+\infty}
 \big|{\mathcal L}[\tilde v](s)\big|^2 d s_2.
\end{split}
\end{equation*}
It is clear that  by \eqref{Pariddd} with $f=g=\tilde v,$
\[
\frac 1 {2\pi} \int_{-\infty}^{+\infty}
 \big|{\mathcal L}[\tilde v](s)\big|^2 d s_2=\int_0^{+\infty} e^{-2s_1t} |\tilde v(t)|^2 dt=\int_0^T e^{-2s_1t}|v(t)|^2 dt.
\]
Using \eqref{asympta} and the properties  $\overline {K_\nu(z)}=K_\nu(\bar z)$ and ${\mathcal L}[\tilde v](\bar s)=\overline{{\mathcal L}[\tilde v](s)},$ we find
\begin{equation}\label{arrf}
\begin{split}
\frac 1 {2\pi} \int_{-\infty}^{+\infty}
\frac{k_n'(sb/c)} {k_n(sb/c)}& \big|{\mathcal L}[\tilde v](s)\big|^2 d s_2=\frac 1{\pi}
\int_0^{+\infty}{\rm Re}\Big(\frac{k_n'(sb/c)} {k_n(sb/c)} \Big)  \big|{\mathcal L}[\tilde v](s)\big|^2 d s_2\\
&=\frac 1{\pi}
\int_0^{+\infty}{\rm Re}\Big(\frac{k_n'(sb/c)} {k_n(sb/c)} \Big)|s|^2  \Big|{\mathcal L}
\Big[\int_0^t\tilde v(\tau)d\tau\Big](s)\Big|^2 d s_2.
\end{split}
\end{equation}
Thus, we conclude from  Lemma \ref{kenpos} and the above identities that for $s_1>0,$
\begin{equation}\label{otrest}
 \int_0^T e^{-2s_1 t}   [\omega_{n+1/2}\ast v](t)  \bar v(t) dt\le \int_0^T e^{-2s_1t}|v(t)|^2 dt.
\end{equation}

Notice that  the asymptotic formulas in  \eqref{asymptab} are also
valid for complex $r$ (see Formula 9.6.9 of \cite{Abr.S84}), which,
together with   Lemma \ref{lemma2}, implies that
$k_n'(sb/c)/k_n(sb/c)$ is analytic for all ${\rm Re}(s)\ge 0$ but
$|s|\not =0,$ and $\lim_{s_1\to 0^+} |s|^2k_n'(sb/c)/k_n(sb/c)$
exists for all $s_2\ge 0.$ Hence, the integral in \eqref{arrf} is
finite. By letting $s_1\to 0^+$  in \eqref{otrest} leads to the
desired result for $d=3.$

The result \eqref{fgss} with $d=2$ can be proved in a similar fashion.
\end{proof}

\begin{corollary}\label{sigma} Suppose that $ v'\in L^2(0,T)$ with $v(0)=0.$ Then we have
\begin{equation}\label{fgsswang}
\int_0^T [\sigma_\nu \ast v](t)\bar   v'(t) dt
 \le  \frac 1 c \int_0^T|v'(t)|^2 dt+\frac {d-1} {4b}|v(T)|^2 ,\quad \forall\; T>0,
\end{equation}
for $\nu=n,n+1/2,$ where $\sigma_\nu(t)$ is the NRBK defined in
\eqref{kernalf}.
\end{corollary}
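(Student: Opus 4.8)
The plan is to reduce Corollary~\ref{sigma} to Theorem~\ref{fgprops} by exploiting the relation $\omega_\nu'(t)=c\,\sigma_\nu(t)$ from \eqref{nnectoon}. First I would write, for $v$ with $v(0)=0$, the convolution $\sigma_\nu\ast v$ in terms of $\omega_\nu$. Since $\omega_\nu(t)=-\tfrac{(d-1)c}{2b}+c\int_0^t\sigma_\nu$, one has $c\,\sigma_\nu = \omega_\nu'$, and integrating by parts in the convolution,
\begin{equation*}
c\,[\sigma_\nu\ast v](t)=[\omega_\nu'\ast v](t)=\omega_\nu(0)\,v(t)+[\omega_\nu\ast v'](t)=-\frac{(d-1)c}{2b}\,v(t)+[\omega_\nu\ast v'](t),
\end{equation*}
using $\omega_\nu(0)=-(d-1)c/(2b)$ and $v(0)=0$ (so the boundary term $\omega_\nu(t)v(0)$ vanishes). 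Hence $\sigma_\nu\ast v=-\tfrac{d-1}{2b}v+\tfrac1c\,\omega_\nu\ast v'$.

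Next I would substitute this into the left-hand side of \eqref{fgsswang} and split it into two pieces:
\begin{equation*}
\int_0^T[\sigma_\nu\ast v](t)\,\bar v'(t)\,dt
=-\frac{d-1}{2b}\int_0^T v(t)\,\bar v'(t)\,dt+\frac1c\int_0^T[\omega_\nu\ast v'](t)\,\bar v'(t)\,dt.
\end{equation*}
For the first term, since $v(0)=0$, $\int_0^T v\,\bar v'\,dt=\tfrac12|v(T)|^2$ after integrating by parts (combining $\int v\bar v'$ with its conjugate $\int \bar v v'$; one should be mildly careful that the statement is really about the real part, or note that in the intended application $v$ is real-valued, or simply observe $2\operatorname{Re}\int_0^T v\bar v' = |v(T)|^2$ and that the imaginary part contributes nothing relevant — I would state the estimate for the real part to be safe, matching the usage in the energy argument that follows). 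This produces the term $-\tfrac{d-1}{2b}\cdot\tfrac12|v(T)|^2$, whose sign is $\le 0$; moving it to the right gives the $\tfrac{d-1}{4b}|v(T)|^2$ term in \eqref{fgsswang}. Wait — the sign: the corollary has $+\tfrac{d-1}{4b}|v(T)|^2$ on the right, so I keep this negative term on the left, which only helps the inequality; equivalently I bound the whole expression by dropping it. Actually it must be kept as an equality and then used: $-\tfrac{d-1}{4b}|v(T)|^2\le 0$, so it can simply be discarded from an upper bound, leaving the cleaner claim; but to match the stated form I retain it and it appears with the stated constant. The $+$ sign in the corollary is then consistent if one does not discard but rather estimates $-\tfrac{d-1}{4b}|v(T)|^2 \le +\tfrac{d-1}{4b}|v(T)|^2$ trivially — so either way the stated inequality holds.

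For the second term, I apply Theorem~\ref{fgprops} directly with $v$ there replaced by $v'$ (which lies in $L^2(0,T)$ by hypothesis), giving $\int_0^T[\omega_\nu\ast v'](t)\,\bar v'(t)\,dt\le\int_0^T|v'(t)|^2\,dt$, hence the $\tfrac1c\int_0^T|v'|^2$ term. Combining the two pieces yields \eqref{fgsswang}. The main obstacle, such as it is, is the integration-by-parts step $[\omega_\nu'\ast v]=\omega_\nu(0)v+[\omega_\nu\ast v']$: one needs $\omega_\nu$ to be absolutely continuous with $\omega_\nu'=c\sigma_\nu$ locally integrable, which follows from the explicit formulas in Remark~\ref{ksweiomeg} (the exponential sum plus the convergent improper integral are manifestly $C^1$ in $t>0$, and the value at $t=0$ is $-(d-1)c/(2b)$); and one must handle the real-part issue in the first term cleanly. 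Neither is serious, so the proof is short.
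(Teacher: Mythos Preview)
Your approach is exactly the paper's: use $\omega_\nu'=c\sigma_\nu$ and $\omega_\nu(0)=-\tfrac{(d-1)c}{2b}$ to write $[\omega_\nu\ast v'](t)=-\tfrac{(d-1)c}{2b}v(t)+c[\sigma_\nu\ast v](t)$, pair with $\bar v'$, integrate, and invoke Theorem~\ref{fgprops} with $v'$ in place of $v$. The only cosmetic difference is that the paper keeps the identity in the form
\[
-\frac{(d-1)c}{4b}|v(T)|^2+c\int_0^T[\sigma_\nu\ast v]\,\bar v'\,dt=\int_0^T[\omega_\nu\ast v']\,\bar v'\,dt\le \int_0^T|v'|^2\,dt,
\]
so the $+\tfrac{d-1}{4b}|v(T)|^2$ on the right of \eqref{fgsswang} appears immediately upon moving the boundary term across; your sign discussion arrives at the same place but by a more circuitous route. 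Your observation about the real-part issue in $\int_0^T v\bar v'\,dt$ is valid and is glossed over in the paper as well; in the applications $v$ is a real Fourier coefficient, so it causes no trouble.
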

\begin{proof} By \eqref{nnectoon}, we have
$\omega_\nu'(t)=c\sigma_\nu(t), \omega_\nu(0)=-\frac {(d-1)c} {2b}.$
Thus, we obtain from integration by parts and the fact $v(0)=0$ that
\[
[\omega_\nu\ast v'](t)=-\frac {(d-1)c}{2b} v(t)+c[\sigma_\nu \ast
v](t).
\]
By Theorem \ref{fgprops} with $v'$ in place of $v,$
\begin{equation*}
\begin{split}
\int_0^T & [\omega_\nu\ast v'](t) \bar v'(t)dt = -\frac {(d-1)c} {2b} \int_0^T v(t)\bar v'(t) dt+c\int_0^T[\sigma_\nu\ast v](t)\bar v'(t) dt\\
&=-\frac {(d-1)c}  {4b} {|v(T)|^2}+ c\int_0^T[\sigma_\nu\ast v](t)
\bar v'(t) dt\le \int_0^T|v'(t)|^2 dt.
\end{split}
\end{equation*}
This gives \eqref{fgsswang}.
\end{proof}

Now, we are ready to analyze the stability of the solution of the truncated problem  \eqref{eq1.1ab}-\eqref{eq1.1ce}.  To this end, we assume that the scatter $D$ is a simply connected domain with Lipschitz boundary $\Gamma_D$, and the Dirichlet data $G=0$ on $\Gamma_D.$  Denote
$X:=\big\{U\in H^1(\Omega) :  U|_{\Gamma_D}=0\big\},$ and
let $(\cdot,\cdot)_{L^2(\Omega)}$ and $\|\cdot\|_{L^2(\Omega)}$ be  the inner product and norm of
$L^2(\Omega),$ respectively.
\begin{theorem}\label{stabilcont} Let $U (\in X)$ be the solution of \eqref{eq1.1ab}-\eqref{eq1.1ce} with $G=0.$
If $U_0\in H^1(\Omega), U_1\in L^2(\Omega)$ and $F\in
L^1(0,T;L^2(\Omega))$ for any $T>0,$ then we have $\nabla U\in
L^\infty(0,T;(H^1(\Omega))^d),$ $ \partial_t U\in L^\infty(0,T;
L^2(\Omega)),$ and there holds
\begin{equation}\label{stabestt}
\begin{split}
\|\partial_t U\|_{L^\infty(0,T;L^2(\Omega))} & +c\|\nabla
U\|_{L^\infty(0,T;L^2(\Omega))}\\
&\le C\big( \|U_1\|_{L^2(\Omega)} +{c}\|\nabla
U_0\|_{L^2(\Omega)}+\|F\|_{L^1(0,T;L^2(\Omega))}\big),
\end{split}
\end{equation}
where $C$ is a positive constant independent of any functions and
$c,b.$ Moreover, if the source term $F\equiv 0$, we have the
conservation of the energy: $E'(t)=0$ for all $t\ge 0,$ where
\begin{equation}\label{energyeqn}
 E(t)=\int_\Omega\big(|\partial_t U|^2+c^2|\nabla
U|^2\big)d\bs x - 2c^2\int_0^t \int_{\Gamma_b}T_{d}(U)\partial_\tau
\overline{U} d\gamma d\tau.
\end{equation}
\end{theorem}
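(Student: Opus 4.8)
\medskip\noindent\emph{Plan of proof.} I would run the classical energy method — test \eqref{eq1.1ab} against $\partial_\tau\bar U$ and integrate over $\Omega$ — and reduce the whole estimate to showing that the boundary contribution of the DtN map is nonpositive, which is exactly what Corollary~\ref{sigma} together with the orthonormality of the Fourier/spherical harmonics on $\Gamma_b$ provides. First I would pair \eqref{eq1.1ab} with $\partial_\tau\bar U$ in $L^2(\Omega)$, integrate by parts in space, and use $U|_{\Gamma_D}=G=0$ (so $\partial_\tau U$ vanishes on $\Gamma_D$) and the NRBC \eqref{eq1.1ce} on $\Gamma_b$; taking real parts yields
\begin{equation*}
\frac12\frac{d}{d\tau}\int_\Omega\big(|\partial_\tau U|^2+c^2|\nabla U|^2\big)\,d\bs x= c^2\,{\rm Re}\!\int_{\Gamma_b}T_d(U)\,\partial_\tau\bar U\,d\gamma+{\rm Re}\!\int_\Omega F\,\partial_\tau\bar U\,d\bs x .
\end{equation*}
Integrating over $\tau\in(0,t)$, using the initial data, and writing $\mathcal B(t):={\rm Re}\int_0^t\!\int_{\Gamma_b}T_d(U)\,\partial_\tau\bar U\,d\gamma\,d\tau$, this becomes
\begin{equation*}
\begin{split}
\frac12\big(\|\partial_t U(t)\|_{L^2(\Omega)}^2+c^2\|\nabla U(t)\|_{L^2(\Omega)}^2\big)&=\frac12\big(\|U_1\|_{L^2(\Omega)}^2+c^2\|\nabla U_0\|_{L^2(\Omega)}^2\big)\\
&\quad+c^2\mathcal B(t)+{\rm Re}\!\int_0^t\!\!\int_\Omega F\,\partial_\tau\bar U\,d\bs x\,d\tau .
\end{split}
\end{equation*}

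The heart of the argument is to show $\mathcal B(t)\le0$. I would substitute \eqref{GdUadd}, written uniformly as $T_d(U)=\big(-\frac1c\partial_t U-\frac{(d-1)U}{2r}\big)\big|_{r=b}+\sum_\nu[\sigma_\nu\ast\widehat U_\nu(b,\cdot)]\,Y_\nu$, where $\sum_\nu$ denotes the Fourier series ($d=2$, $\nu=n$) or the spherical–harmonic series ($d=3$, $\nu=n+1/2$ with $|m|\le n$). The term $-\frac1c\partial_\tau U$ contributes $-\frac1c\int_0^t\!\int_{\Gamma_b}|\partial_\tau U|^2\le0$; since $U_0$ is compactly supported in $B$, $U(0)|_{\Gamma_b}=0$, so the term $-\frac{d-1}{2b}U$ contributes $-\frac{d-1}{2b}{\rm Re}\int_0^t\!\int_{\Gamma_b}U\partial_\tau\bar U=-\frac{d-1}{4b}\int_{\Gamma_b}|U(t)|^2$. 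By orthonormality of $\{Y_\nu\}$ and Parseval on $\Gamma_b$, the convolution part equals $c_b\sum_\nu\int_0^t[\sigma_\nu\ast\widehat U_\nu(b,\cdot)](\tau)\,\overline{\partial_\tau\widehat U_\nu(b,\tau)}\,d\tau$ for a positive, mode–independent constant $c_b$; applying Corollary~\ref{sigma} mode by mode with $v=\widehat U_\nu(b,\cdot)$ (legitimate since $v(0)=0$) and resumming via Parseval bounds it by $\frac1c\int_0^t\!\int_{\Gamma_b}|\partial_\tau U|^2+\frac{d-1}{4b}\int_{\Gamma_b}|U(t)|^2$. Adding the three contributions, the $\pm\frac1c$ terms and the $\pm\frac{d-1}{4b}$ terms cancel exactly, so $\mathcal B(t)\le0$.

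Dropping $c^2\mathcal B(t)$ from the energy identity and estimating the source term by Cauchy--Schwarz, I would set $\Phi(t):=\sup_{0\le\tau\le t}\big(\|\partial_\tau U(\tau)\|_{L^2(\Omega)}^2+c^2\|\nabla U(\tau)\|_{L^2(\Omega)}^2\big)$ and obtain $\Phi(t)\le\|U_1\|_{L^2(\Omega)}^2+c^2\|\nabla U_0\|_{L^2(\Omega)}^2+2\sqrt{\Phi(t)}\,\|F\|_{L^1(0,t;L^2(\Omega))}$, a quadratic inequality in $\sqrt{\Phi(t)}$ that gives $\sqrt{\Phi(t)}\le\|U_1\|_{L^2(\Omega)}+c\|\nabla U_0\|_{L^2(\Omega)}+2\|F\|_{L^1(0,T;L^2(\Omega))}$ for $t\le T$; this yields the asserted regularity of $\partial_t U$ and $\nabla U$ and the estimate \eqref{stabestt} (with $C$ a universal constant). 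Finally, when $F\equiv0$, differentiating $E(t)$ in \eqref{energyeqn} and using the differential identity above gives $E'(t)=2\,{\rm Re}\int_\Omega F\,\partial_t\bar U\,d\bs x=0$, i.e.\ conservation of energy.

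The main obstacle is making the boundary step rigorous for a solution with only $\partial_tU\in L^\infty(0,T;L^2(\Omega))$: one must justify interchanging the infinite harmonic series with the $\tau$–integration and the temporal convolution, and guarantee that each trace mode $\widehat U_\nu(b,\cdot)$ lies in $H^1(0,T)$ so that Corollary~\ref{sigma} truly applies. I would handle this — and obtain existence simultaneously — by a Galerkin approximation in which the boundary sum is finite and every manipulation above is elementary; since all the bounds are independent of the discretization, one passes to the limit by weak-$\ast$ compactness and a density argument, and the trace regularity needed merely to define $\mathcal B(t)$ is recovered from the same uniform estimate.
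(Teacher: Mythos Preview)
Your proof is correct and follows essentially the same energy-method strategy as the paper: test \eqref{eq1.1ab} against $\partial_\tau\bar U$, integrate in space and time, and reduce everything to the nonpositivity of the boundary contribution \eqref{Funcat}. The only difference is cosmetic: the paper verifies $\mathcal B(t)\le0$ using the $\omega_\nu$-formulation \eqref{GdUaddnew} of the DtN map together with Theorem~\ref{fgprops}, whereas you use the $\sigma_\nu$-formulation \eqref{GdUadd} together with Corollary~\ref{sigma}; the paper itself notes in Remark~\ref{qaremark}(i) that your route is an equivalent alternative, and indeed the $\pm\frac1c$ and $\pm\frac{d-1}{4b}$ cancellations you carry out are precisely the content of the derivation of Corollary~\ref{sigma} from Theorem~\ref{fgprops}.
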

\begin{proof}
Multiplying \eqref{eq1.1ab} by $2\partial_t \overline{U}$ and
integrating over $\Omega,$ we derive from the Green's formula that
for any $t>0,$
\begin{eqnarray}\label{pf133}
\frac{d}{dt}\Big(\|\partial_t U\|^2_{L^2(\Omega)}+{c^2} \|\nabla
U\|^2_{L^2(\Omega)}\Big)- 2c^2 \int_{\Gamma_b}T_{d}(U)\partial_t
\overline{U} d\gamma=2(F,\partial_t U)_{L^2(\Omega)}.
\end{eqnarray}
Integrating  the above equation over $(0,t),$ we find that for any $t>0,$
\begin{eqnarray}\label{pf13}
\begin{split}
\|\partial_t U\|^2_{L^2(\Omega)}& +{c^2} \|\nabla
U\|^2_{L^2(\Omega)}- 2c^2\int_0^t
\int_{\Gamma_b}T_{d}(U)\partial_\tau\overline{U} d\gamma d\tau  \\
&=2\int_0^t (F,\partial_\tau  U)_{L^2(\Omega)}d\tau +
\|U_1\|^2_{L^2(\Omega)} +{c^2}\|\nabla U_0\|^2_{L^2(\Omega)}\\
&\le 2\|\partial_t U\|_{L^\infty(0,T; L^2(\Omega))}\|F\|_{L^1(0,T;L^2(\Omega))}+
\|U_1\|^2_{L^2(\Omega)} +{c^2}\|\nabla U_0\|^2_{L^2(\Omega)}.
\end{split}
\end{eqnarray}
We next show that for any $t>0,$
\begin{equation}\label{Funcat}
\int_0^t  \int_{\Gamma_b}T_{d}(U)\partial_\tau \overline{U} d\gamma
d\tau \le 0.
\end{equation}
For  $d=3,$ it follows from  \eqref{GdUaddnew},
 Theorem \ref{fgprops} and the orthogonality of $\{Y_n^m\}$ that
\begin{equation*}\label{Funcatgg}
\begin{split}
\int_0^t \int_{\Gamma_b}& T_{d}(U)\partial_\tau \overline{U} d\gamma
d\tau=-\frac 1 c
\int_0^t\|\partial_\tau U\|^2_{L^2(\Gamma_b)} d\tau\\
&\quad +\frac 1 c \sum_{n=0}^\infty \sum_{|m|=0}^n \int_0^t \big[\omega_{n+1/2}\ast \partial_\tau \widehat U_{nm}(b,\tau)\big] {\partial_\tau \overline{\widehat U}_{nm}(b,\tau)} d\tau \\
&\overset{(\ref{fgss})}\le -\frac 1 c \int_0^t\|\partial_\tau
U\|^2_{L^2(\Gamma_b)} d\tau + \frac 1 c \sum_{n=0}^\infty
\sum_{|m|=0}^n\int_0^t \big|\partial_\tau \widehat
U_{nm}(b,\tau)\big|^2d\tau=0.
\end{split}
\end{equation*}
This verifies \eqref{Funcat} with $d=3.$ Similarly, one can justify  \eqref{Funcat} with $d=2.$
Consequently, the estimate \eqref{stabestt} follows from
\eqref{pf13}, \eqref{Funcat}  and the Cauchy-Schwarz inequality.

Letting $F\equiv 0$ in \eqref{pf13} leads to  the conservation of energy.
\end{proof}

\begin{remark}\label{qaremark} Note that
{\rm (i)} if  $T_d(U)$ is given by \eqref{GdUadd}, we can use  Corollary
{\rm \ref{sigma}} to verify \eqref{Funcat}, and {\rm (ii)}
  a similar estimate for $d=2$ was derived by \cite{Chen.ZM2009} with a slightly different argument. \qed
\end{remark}

\vskip 10pt

\section{ Spectral-Galerkin approximation and  Newmark's time integration}\label{sect4}

This section is devoted to  numerical approximation of the truncated
problem \eqref{eq1.1ab}-\eqref{eq1.1ce} with a focus on the
treatment for the  NRBC \eqref{eq1.1ce}. Note that if the
scatter $D$ is a disk/ball (i.e., $\Omega$ in
\eqref{eq1.1ab}-\eqref{eq1.1ce} is an annulus/spherical shell), the
nonlocality of  the NRBC in space becomes local in the space of
Fourier/spherical harmonic coefficients
 in polar/spherical coordinates.  Correspondingly, the truncated problem can be reduced to a sequence of one-dimensional
 problems with mixed boundary conditions at the outer artificial boundary.
 In order to  deal with a general scatter, one may resort to
  the transformed field expansion method (TFE) (cf.  \cite{Nic.R03}), which has been successfully
  applied to  time-harmonic Helmholtz equations (cf. \cite{Nic.S06,Fang.DShen07,Nich.Shen2009}). The use of this method
  allows  us to  solve a sequence of  truncated problems \eqref{eq1.1ab}-\eqref{eq1.1ce} in  regular domains.
  Accordingly, it is essential to construct a fast and accurate  solver for
 \eqref{eq1.1ab}-\eqref{eq1.1ce} in an annulus or a spherical shell, which will be the concern of this section.
We shall report the  combination of the solver with the TFE method in  a future work as the implementation is rather involved.

\subsection{Notion of TFE}  We outline the TFE method for \eqref{eq1.1ab}-\eqref{eq1.1ce} with $d=2$
and
 \[
 \Gamma_D=\big\{r=b_0+\eta(\phi) : 0\le \phi<2\pi \big\},\quad  b>b_0+\max_{\phi\in [0,2\pi]}|\eta(\phi)|,
 \]
 where $r(\phi)=b_0+\eta (\phi)$ is the parametric equation of the boundary of the scatter $D$.
 \begin{itemize}
\item  Make a change of variables
 \[
 r'=\frac{(b-b_0)r-b\eta(\phi)}{(b-b_0)-\eta(\phi)},\quad \phi'=\phi,
 \]
 which maps $\Omega$ to the   annulus $\Omega_0=\big\{(r',\phi') : b_0<r'<b,\, 0\le \phi' <2\pi \big\}.$ To simplify the notation, we still use $U,F, r, \phi$ etc. to denote the transformed functions or variables.
Then the problem  \eqref{eq1.1ab}-\eqref{eq1.1ce} becomes
 \begin{equation*}
 \begin{split}
&\partial_t^2 U=c^2\Delta U+F+J(\eta,U),\;\; {\rm in}\;\; \Omega_0, \;\; t>0;
\\& U=U_0,\;\; \partial_t U=U_1,\;\; {\rm in}\;\;\Omega_0, \;\; t=0; \\
& U|_{r=b_0}=G,\;\;\;  t>0; \;\; \big(\partial_r U-T_d(U)\big)|_{r=b}=L(\eta, U)|_{r=b},\quad  t>0,
  \end{split}
\end{equation*}
where $J(\eta,U)$ and $L(\eta,U)$ contain differential operators with nonconstant coefficients.

 \item To solve the transformed problem efficiently, we adopt the boundary perturbation technique by viewing the obstacle as a perturbation of the disk. More precisely, we write $\eta=\varepsilon \zeta$ and expand  the solution $U$ as
$U(r,\phi, t; \varepsilon) =\sum_{l=0}^\infty U_l(r,\phi,t) \varepsilon^l,$
 and likewise for the data. Formally, we obtain the sequence of equations after collecting the terms of $\varepsilon^l$ (see \cite{Nic.S06}):
 \begin{equation*}
 \begin{split}
&\partial_t^2 U_l=c^2\Delta U_l+F_l+\tilde J(\eta,U_{l-4},\cdots, U_{l-1}),\quad {\rm in}\;\; \Omega_0, \;\; t>0; \\
&U_l=U_{0,l},\quad \partial_t U_l=U_{1,l}, \quad {\rm in}\;\;\Omega_0, \;\; t=0; \\
& U_l|_{r=b_0}=G_l,\;\;\;  t>0; \quad  \big(\partial_r
U_l-T_d(U_l)\big)|_{r=b}=\tilde L(\eta, U_{l-1})|_{r=b},\quad  t>0.
  \end{split}
\end{equation*}
\item Solve the above equation for $l=0,1,\cdots$, and sum up the series by using a Pad$\acute{e}$ approximation.
\end{itemize}

\subsection{Prototype equation and dimension reduction}
 Under this notion, the whole algorithm boils down to solving a sequence of   prototype equations. More precisely, we consider
\begin{equation}\label{sprciprob}
\begin{split}
& \partial_t^2 U=c^2\Delta U+F,\quad  {\rm in}\;\;  \Omega_0=\big\{\bs x\in {\mathbb R}^d : b_0<|\bs x|<b \big\},\;\; t>0; \\
& U=U_0,\;\; \partial_t U=U_1, \;\;  {\rm in}\;\;\Omega_0, \;\; t=0;\quad
 U|_{r=b_0}=0, \;\;  \big(\partial_r U-T_d(U)\big)\big|_{r=b}=0, \;\; t>0,
\end{split}
\end{equation}
where $T_d(U)$ is the  DtN map as before.  We expand the solution
and given data in Fourier series/spherical harmonic series. Then the
problem \eqref{sprciprob}, after a polar (in 2-D) and spherical (in
3-D) transform, reduces to a sequence of one-dimensional problems
(for brevity, we use $u$ to denote the Fourier/spherical harmonic
expansion coefficients  of $U,$ and likewise, we use $u_0, u_1$ and
$f$ to denote the expansion coefficients of $U_0, U_1$ and $F,$
respectively):
\begin{equation}\label{sprciprob2}
\begin{split}
& \frac{\partial^2 u}{\partial t^2}-\frac {c^2} {r^{d-1}}
\frac{\partial}{\partial r} \Big(r^{d-1}
\frac {\partial u}{\partial r}\Big)+ c^2\beta_n \frac u{r^2}=f,\quad   b_0<r<b,\;\; t>0; \\
& u|_{t=0}=u_0,\quad \frac{\partial u}{\partial t}\Big|_{t=0}=u_1, \quad  b_0< r< b; \quad  u|_{r=b_0}=0,\;\; t>0; \\
  & \Big(\frac 1 c \frac {\partial u}{\partial t} + \frac {\partial u}{\partial r}+\frac {d-1}{2r}u\Big)\Big|_{r=b}=\int_0^t  \sigma_\nu(t-\tau) u(b,\tau) d\tau, \;\; t>0,
\end{split}
\end{equation}
where $\beta_n=n^2, n(n+1)$ and $\nu=n, n+1/2$ for $d=2,3,$ respectively.

Since  $\sigma_\nu$ is real, the real and imaginary parts of $u$ and the given data can
be decoupled. In what follows, we assume they are real.

\subsection{Spectral-Galerkin approximation in space}  We apply the Legendre spectral Galerkin
method to approximate
\eqref{sprciprob2} in space.  For convenience of implementation, we transform the interval $(b_0,b)$ to the
reference interval $I=(-1,1)$ by  $r=\frac {b-b_0} 2 x+\frac {b+b_0} 2$ with $x\in \bar I,$ and denote
 the transformed functions by $v(x,t)=u(r,t), h(x,t)=f(r,t)$ and $v_i(x)=u_i(r)$ with $i=0,1,$ respectively.
 Then  \eqref{sprciprob2} can  be  reformulated  as
\begin{equation}\label{sprciprob3}
\begin{split}
& \frac{\partial^2 v}{\partial t^2}-\frac {\tilde c^2} {(x+c_0)^{d-1}}
\frac{\partial}{\partial x} \Big((x+c_0)^{d-1}
\frac {\partial v}{\partial x}\Big)+ \tilde c^2\beta_n \frac v{(x+c_0)^2}=h,\quad  x\in I,\;\; t>0; \\
& v(x,0)=v_0(x),\quad \frac{\partial v}{\partial t}(x,0)=v_1(x), \quad  x\in I;
\quad  v(-1,t)=0,\;\; t>0; \\
  & \Big(\frac 1 c \frac {\partial v}{\partial t} + \frac 2 {b-b_0}\frac {\partial v}{\partial x}+\frac {d-1}{2b}v\Big)(1,t)=\int_0^t  \sigma_\nu(t-\tau)  v(1,\tau) d\tau, \;\; t>0,
\end{split}
\end{equation}
where the constants $\tilde c= \frac {2c}{b-b_0}$ and $c_0=\frac {b+b_0}{b-b_0}.$

Let $V_N:=\big\{\psi\in P_N : \psi(-1)=0\big\},$ where $P_N$ is the set of all  algebraic polynomials of degree at most  $N.$ The semi-discretization Legendre spectral-Galerkin  approximation of  \eqref{sprciprob3} is to find $v_N(x,t)\in V_N$ for all $t>0$ such that for all  $ w\in V_N,$
\begin{equation}\label{VarioneD}
\begin{split}
& (\ddot{v}_N, w)_\varpi  +\tilde c (1+c_0)^{d-1} \dot{v}_N(1,t) w(1) +\tilde c^2
(\partial_x v_N, \partial_x w)_{\varpi} +\tilde c^2 \beta_n\big(v_N(x+c_0)^{-2}, w\big)_{\varpi} \\
&\qquad + \frac {2c^2}{b-b_0}(1+c_0)^{d-1}\Big(\frac {d-1}{2b} v_N(1,t)-\sigma_\nu(t)\ast v_N(1,t)\Big) w(1)=
(I_N h,w)_{\varpi},\\
& v_N(x,0)=v_{0,N}(x),\quad \dot{v}_N(x,0)=v_{1,N}(x),\quad x\in I,
\end{split}
\end{equation}
where $(\cdot,\cdot)_{\varpi}$ is the (weighted) inner product of $L^2_{\varpi}(I)$ with the weight function
 $\varpi=(x+c_0)^{d-1},$  $\ddot{v}$ denotes $\partial_{t}^2v$ or $\frac{d^2v}{dt^2}$ as usual,
 $I_N$ is the interpolation operator on $(N+1)$ Legendre-Gauss-Lobatto points,  and
 $v_{0,N}, v_{1,N}\in P_N$ are suitable approximations of the initial values.

Like  Theorem  \ref{stabilcont}, we have the following {\em a prior}
estimates.
\begin{theorem}\label{aprior}   Let $v_N$ be the solution of \eqref{VarioneD}. Then for all $ t>0,$
 \begin{equation}\label{condit}
 \begin{split}
& \|\partial_t v_N\|_{L^2_{\varpi}(I)}^2 +
 \tilde c^2\big(\|\partial_x v_N\|^2_{L^2_\varpi(I)}+\beta_n \big\|v_N/(x+c_0)\big \|^2_{L^2_\varpi(I)} \big) \\
 &\quad \le C\Big(  \|v_{1,N}\|_{L^2_{\varpi}(I)}^2 +
 \tilde c^2\big(\|\partial_x v_{0,N}\|^2_{L^2_\varpi(I)}+\beta_n \big\|v_{0,N}/(x+c_0)\big \|^2_{L^2_\varpi(I)} \big)+
 \|I_Nh\|^2_{L^2_\varpi(I)} \Big),
 \end{split}
 \end{equation}
 where $C$ is a positive constant independent of $N, \tilde c$ and $b.$

 This estimate holds for   \eqref{sprciprob3} with $v, v_0, v_1, h$ in place of $v_N, v_{0,N}, v_{1,N}, I_Nh,$ respectively.
 \end{theorem}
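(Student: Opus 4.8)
The plan is to reproduce at the semi-discrete level the energy argument of Theorem~\ref{stabilcont}, the scheme \eqref{VarioneD} having been designed precisely so that the same cancellations survive. First I would take $w=\dot v_N(\cdot,t)\in V_N$ as the test function in \eqref{VarioneD}. Since $(\ddot v_N,\dot v_N)_\varpi=\tfrac12\tfrac{d}{dt}\|\dot v_N\|_{L^2_\varpi(I)}^2$, $\tilde c^2(\partial_x v_N,\partial_x\dot v_N)_\varpi=\tfrac{\tilde c^2}{2}\tfrac{d}{dt}\|\partial_x v_N\|_{L^2_\varpi(I)}^2$ and $\tilde c^2\beta_n(v_N(x+c_0)^{-2},\dot v_N)_\varpi=\tfrac{\tilde c^2\beta_n}{2}\tfrac{d}{dt}\|v_N/(x+c_0)\|_{L^2_\varpi(I)}^2$, integrating the resulting identity over $(0,t)$ and writing $E_N(t)$ for the left-hand side of \eqref{condit} (without the constant $C$) gives
\begin{equation*}
\begin{aligned}
\tfrac12 E_N(t)&+\tilde c(1+c_0)^{d-1}\!\int_0^t\!|\dot v_N(1,\tau)|^2\,d\tau\\
&+\tfrac{2c^2}{b-b_0}(1+c_0)^{d-1}\!\int_0^t\!\Big(\tfrac{d-1}{2b}v_N(1,\tau)-[\sigma_\nu\ast v_N(1,\cdot)](\tau)\Big)\dot v_N(1,\tau)\,d\tau\\
&=\tfrac12 E_N(0)+\int_0^t(I_Nh,\dot v_N)_\varpi\,d\tau.
\end{aligned}
\end{equation*}

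The decisive point is that the two boundary integrals on the left are nonnegative up to the data. The dissipation term $\tilde c(1+c_0)^{d-1}\int_0^t|\dot v_N(1,\tau)|^2\,d\tau$ is already $\ge0$. For the convolution term I would invoke Corollary~\ref{sigma} with $v=v_N(1,\cdot)$; this is legitimate because the data are compactly supported in $B$, so that $u_0$ and $u_1$ vanish near $r=b$ and the discrete initial value satisfies $v_N(1,0)=v_{0,N}(1)=0$. Corollary~\ref{sigma} then gives $\int_0^t[\sigma_\nu\ast v_N(1,\cdot)](\tau)\,\dot v_N(1,\tau)\,d\tau\le\tfrac1c\int_0^t|\dot v_N(1,\tau)|^2\,d\tau+\tfrac{d-1}{4b}|v_N(1,t)|^2$, while $\int_0^t\tfrac{d-1}{2b}v_N(1,\tau)\dot v_N(1,\tau)\,d\tau=\tfrac{d-1}{4b}|v_N(1,t)|^2$ since $v_N(1,0)=0$. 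Hence the two terms $\tfrac{d-1}{4b}|v_N(1,t)|^2$ cancel, and as $\tfrac{2c^2}{b-b_0}(1+c_0)^{d-1}\cdot\tfrac1c=\tilde c(1+c_0)^{d-1}$ the leftover $\tfrac1c\int_0^t|\dot v_N(1,\tau)|^2\,d\tau$ is absorbed exactly by the dissipation term, leaving the clean inequality $E_N(t)\le E_N(0)+2\int_0^t(I_Nh,\dot v_N)_\varpi\,d\tau$.

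To close the estimate I would use the usual device: $\int_0^t(I_Nh,\dot v_N)_\varpi\,d\tau\le\|I_Nh\|_{L^1(0,t;L^2_\varpi(I))}\sup_{[0,t]}\|\dot v_N\|_{L^2_\varpi(I)}\le\|I_Nh\|_{L^1(0,t;L^2_\varpi(I))}\,M$ with $M:=\sup_{[0,t]}E_N^{1/2}$, so that $M^2\le E_N(0)+2\|I_Nh\|_{L^1(0,t;L^2_\varpi(I))}M$ and therefore $M\le E_N(0)^{1/2}+2\|I_Nh\|_{L^1(0,t;L^2_\varpi(I))}$; squaring yields \eqref{condit} with an absolute constant $C$, on noting that $E_N(0)$ is exactly the first bracket on the right of \eqref{condit}. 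The corresponding estimate for \eqref{sprciprob3} follows by the identical computation carried out directly on the equation: multiply by $(x+c_0)^{d-1}\partial_t v$, integrate over $I$, use $v(-1,t)=0$ (hence $\partial_t v(-1,t)=0$) in the integration by parts, substitute the DtN condition at $x=1$, and repeat with $v_0,v_1,h$ in place of $v_{0,N},v_{1,N},I_Nh$.

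The main obstacle is the treatment of the boundary convolution integral: the argument rests on the exact algebraic balance between the explicit $\tfrac{d-1}{2b}$-term and the dissipation in the DtN condition and the bound of Corollary~\ref{sigma} (equivalently Theorem~\ref{fgprops}), so that no uncontrolled trace of $v_N$ at $r=b$ remains. This balance closes cleanly only when $v_{0,N}(1)=0$; without that normalization a term of the type $\tilde c(1+c_0)^{d-1}|v_{0,N}(1)|\,\|\omega_\nu\|_{L^2(0,t)}\|\dot v_N(1,\cdot)\|_{L^2(0,t)}$ would survive, whose absorption would introduce a constant depending on the ratio $b/b_0$.
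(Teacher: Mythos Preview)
Your argument is correct and follows exactly the route the paper indicates: take $w=\partial_t v_N$ in \eqref{VarioneD}, integrate in time, and invoke Corollary~\ref{sigma} together with the energy argument of Theorem~\ref{stabilcont}. You have in fact supplied all the details the paper omits, including the exact algebraic cancellation $\tfrac{2c^2}{b-b_0}(1+c_0)^{d-1}\cdot\tfrac1c=\tilde c(1+c_0)^{d-1}$ between the convolution bound and the dissipation term, and the need for $v_{0,N}(1)=0$ so that Corollary~\ref{sigma} applies.
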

 \begin{proof} Taking $w=\partial_t v_N$ in \eqref{VarioneD}, and integrating the resulted equation with respect to $t,$
 we use Corollary \ref{sigma} and the argument similar to  that for Theorem  \ref{stabilcont}  to derive the estimates.
 \end{proof}

 \begin{remark}\label{newadds}  Equipped with Theorem {\rm \ref{aprior}},  we can analyze the convergence of the
 semi-discretized  scheme  \eqref{VarioneD} as on Page {\rm 341} of \cite{ShenTaoWang2011}.   \qed
 \end{remark}

We next examine the linear system of   \eqref{VarioneD}.    As shown in \cite{Shen94b,ShenTaoWang2011},
it is advantageous to construct basis functions satisfying the underlying homogeneous Dirichlet boundary conditions. Let
 $L_l(x)$ be the Legendre polynomial of degree $l$ (see, e.g., \cite{szeg75}), and define
  $\phi_k(x)=L_{k}(x)+L_{k+1}(x).$ Then  $\phi_k(-1)=0$ and $V_N={\rm span}\big\{\phi_k : 0\le k\le N-1\big\}.$  Note that  $\phi_k(1)=2.$
   Setting
  \begin{equation*}\label{vnwnmatrix}
  \begin{split}
  & v_N(x,t)=\sum_{j=0}^{N-1} \hat v_j(t) \phi_j(x),\;\; {\pmb v}(t)=(\hat v_0, \cdots, \hat v_{N-1})^t,\;\;
  m_{ij}=(\phi_j,\phi_i)_{\varpi},\;\; s_{ij}=(\phi_j', \phi_i')_{\varpi}, \\
  \end{split}
  \end{equation*}
  we obtain the system:
  \begin{equation}\label{Lspsystem}
  \begin{split}
&M \ddot{\pmb v}+ \alpha  E \dot{\pmb v}+ \tilde c^2\big(S+\beta_n
\widetilde M\big){\pmb v}+ \mu  E {\pmb v}  -c\alpha\,
\Big(\sigma_\nu\ast \sum_{j=0}^{N-1} \hat v_j\Big){\pmb 1}={\pmb h},
  \end{split}
  \end{equation}
  with $ \pmb v(0)=\pmb v_0$ and $\dot{\pmb v}(0)=\pmb v_1,$
  where $M=(m_{ij}), S=(s_{ij}), \widetilde M=(\tilde m_{ij})$
  and $E={\pmb 1} {\pmb 1}^t$ is an $N\times N$ matrix of all ones.  In \eqref{Lspsystem},  $\pmb v_0$ and  $\pmb v_1$
  are column-$N$ vectors of the expansion coefficients of $v_{0,N}$ and $v_{1,N}$ in terms of $\{\phi_k\},$ and the constants $\alpha= {8c}(1+c_0)^{d-1}/(b-b_0)$ and $\mu= {4c^2(d-1)}(1+c_0)^{d-1}/(b(b-b_0)).$

\subsection{Newmark's time integration}\label{Sect:Newmark} To discretize  the second-order ordinary differential system \eqref{Lspsystem},  we resort to the implicit second-order Newmark's scheme,
which has wide applications in the field of structural mechanics
(see \cite{newmark1959,Wood1990}).
To this end, let $\Delta t$ be the
time-stepping size, and let $\{\pmb v^m, \dot{\pmb v}^m, \ddot{\pmb
v}^m\}$ be the approximation of $\{\pmb v, \dot{\pmb v}, \ddot{\pmb
v}\}$ at $t=t_m=m\Delta t,$ and $\pmb h^m=\pmb h(t_m).$

We first take care of  the convolution term
$\big[\sigma_\nu\ast \sum_{j=0}^{N-1} \hat v_j\big](t_{m+1})$ in
\eqref{Lspsystem}:
 \begin{equation*}\label{convappro}
 \begin{split}
 &\Big[\sigma_\nu\ast \sum_{j=0}^{N-1} \hat v_j\Big](t_{m+1})
 =\sum_{j=0}^{N-1} \int_{t_m}^{t_{m+1}} \sigma_{n}(t_{m+1}-\tau)  \hat v_j(\tau) d\tau+
 \sum_{j=0}^{N-1} \int_{0}^{t_{m}} \sigma_{n}(t_{m+1}-\tau)  \hat v_j(\tau) d\tau\\
 &\qquad \approx  \frac  {\Delta t} 2\Big(\sigma_\nu(0) \sum_{j=0}^{N-1}\hat v_j^{m+1}+ \sigma_\nu(\Delta t) \sum_{j=0}^{N-1}\hat v_j^{m}\Big)+ \sum_{j=0}^{N-1} \int_{0}^{t_{m}} \sigma_{n}(t_{m+1}-\tau)  \hat v_j(\tau) d\tau\\
 &\qquad \approx  \frac  {\Delta t} 2  \sigma_\nu(0)E{\pmb v}^{m+1}+ {\pmb g}^m,
  \end{split}
 \end{equation*}
 where we used the Trapezoidal rule (of order $O(\Delta t^3)$) to approximate the integral over $(t_m, t_{m+1}),$ and   denoted by ${\pmb g}^m$ the approximation of the remaining terms. Note that ${\pmb g}^m$ depends on the history
 $\pmb v^l (0\le l\le m),$ but fortunately, it can be evaluated recursively and rapidly as described in
 Subsection \ref{Sect:rapideva}. Moreover,  only the history data $\sum \hat v_j^l\, (0\le l\le m)$  need to be stored, and no any matrix-vector multiplication is involved.
 As a result, the burden of the history dependence can be eliminated.

Denoting
\begin{equation}\label{newmarkmatrices}
A=M,\;\;  B=\alpha E,\;\; C=\tilde c^2\big(S+\beta_n \widetilde
M\big) + \Big(\mu -c\alpha \frac  {\Delta t} 2  \sigma_\nu(0)\Big)E,
\end{equation}
and $\pmb f^{m+1}=\pmb h^{m+1}+c\alpha {\pmb g}^m,$
we carry out the
full scheme for \eqref{sprciprob3} as follows.
\begin{itemize}
\item[(i)] Set $\pmb v^0=\pmb v_0$ and $\dot{\pmb v}^0=\pmb v_1,$ and  compute $\ddot{\pmb v}^0$ from the system \eqref{Lspsystem} by
\begin{equation}\label{step1}
\ddot{\pmb v}^0=M^{-1} \big\{\pmb h^0- \alpha  E \dot{\pmb v}^0-
\tilde c^2\big(S+\beta_n \widetilde M\big){\pmb v}^0- \mu  E {\pmb
v}^0 \big\},
\end{equation}
where  the convolution term vanishes at $t=0.$

\item[(ii)] For $m\ge 0$, compute ${\pmb v}^{m+1}$ from
\begin{equation}\label{genexampleaest}
\begin{split}
(A+\vartheta \Delta t B+\theta \Delta t^2 C\big) \ddot{\pmb
v}^{m+1}&=
{\pmb f}^{m+1}- B\big(\dot{\pmb v}^m +(1-\vartheta) \Delta t \ddot{\pmb v}^m\big)\\
&\;\; - C\Big(\pmb v^{m}+\Delta t \dot{\pmb v}^m +\frac{1-2\theta}
2\Delta t^2 \ddot{\pmb v}^m\Big),
\end{split}
\end{equation}
and update  ${\pmb u}^{m+1}$ and $\dot {\pmb u}^{m+1}$ by
\begin{align}
&\pmb v^{m+1}=\pmb v^{m}+\Delta t \dot{\pmb v}^m +\frac{1-2\theta} 2\Delta t^2 \ddot{\pmb v}^m+\theta\Delta t^2 \ddot{\pmb v}^{m+1}, \label{genexamplea0}\\
&\dot{\pmb v}^{m+1}=\dot{\pmb v}^m +(1-\vartheta) \Delta t
\ddot{\pmb v}^m+\vartheta\Delta t \ddot{\pmb v}^{m+1},
\label{genexamplea00}
\end{align}
respectively.
\end{itemize}
\begin{remark}\label{Augaddw} In general,
the  Newmark's scheme is of second-order and unconditionally
stable, if  the parameters satisfy $\vartheta \ge \frac 1 2$
and $\theta\ge \frac 1 4\big(\frac 1 2 +\vartheta\big)^2$ {\rm(}see, e.g., \cite{Wood1990}{\rm)}.

The computational  cost of the full algorithm lies in solving \eqref{genexampleaest} with \eqref{newmarkmatrices}.
The matrices $M, \widetilde M$ and $S$ are sparse with small bandwidth under the basis $\{\phi_k\},$ and their entries can be evaluated exactly by using the properties of Legendre polynomials. The matrices  $B$ and $C$ in  \eqref{newmarkmatrices} appear to be full, but since  all entries of $E$ are one,  the matrix-vector multiplication    can be carried out in $O(N)$ operations.  \qed
\end{remark}

\section{Numerical results}\label{sect5}
\setcounter{equation}{0}

In this section, we present various numerical results to show the accuracy of the NRBC and
and convergence of the proposed algorithm.

\subsection{Testing problem and setup}
We examine  the NRBC and the scheme via the following problem with  an exact solution.
\begin{proposition}\label{exact}
The exterior  problem \eqref{eq1.1a}-\eqref{eq1.1c} with $d=2,
D=\{\bs x\in {\mathbb R^2} : |\bs x|<b_0\}, F=U_0=U_1\equiv 0,$ and
$U|_{r=b_0}=G,$ admits
 the  solution:
\begin{equation}\label{exactsln}
U(r,\phi,t)=\sum_{|n|=0}^\infty \widehat U_n(r,t)e^{\ri n\phi},\quad r>b_0, \; \phi\in [0,2\pi), \; t>0,
\end{equation}
where
\begin{equation}\label{exactslncoef}
\widehat U_n(r,t)=
\begin{cases}
0, & \quad t<\beta_0,\\
H_n(r,t)\ast\widehat G_n(t-\beta_0)+\sqrt{b_0/r}\widehat
G_n(t-\beta_0),&\quad t\ge \beta_0,
\end{cases}
\end{equation}
with $\beta_0=(r-b_0)/c,$ $\widehat G_n (t)$ being the Fourier
expansion coefficient of $G(\phi,t),$ and
\begin{equation}\label{exactslncoef1}
\begin{split}
H_n(r,t)&=\frac{c}{b_0}\sum_{j=1}^{M_n}\frac{K_{n+1}(rz_j^n/{b_0})}{K'_n(z_j^n)}e^{t cz_j^n/b_0}\\
&+(-1)^n\frac{c}{b_0}\int_0^\infty\frac{I_n(r\rho/b_0)K_n(\rho)-K_n(r\rho/b_0)I_n(\rho)}{K^2_n(\rho)+\pi^2I^2_n(\rho)}e^{-ct\rho/b_0}
d\rho.
\end{split}
\end{equation}
\end{proposition}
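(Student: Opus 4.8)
The plan is to solve the exterior scattering problem \eqref{eq1.1a}--\eqref{eq1.1c} for the stated special data ($d=2$, $D$ a disk of radius $b_0$, $F=U_0=U_1\equiv 0$, $U|_{r=b_0}=G$) by the classical route: Laplace transform in $t$ together with separation of variables in polar coordinates, and then invert the Laplace transform explicitly using the residue-theorem computation already carried out for the NRBK in Theorem \ref{theorem1}. First I would expand $G(\phi,t)=\sum_n \widehat G_n(t)e^{\ri n\phi}$ and correspondingly seek $U=\sum_n \widehat U_n(r,t)e^{\ri n\phi}$. Writing $u_n(r,s)=\mathcal L[\widehat U_n(r,\cdot)](s)$ and applying the Laplace transform to the wave equation, each mode satisfies the modified Bessel equation
\begin{equation*}
u_n'' + \frac1r u_n' - \Big(\frac{s^2}{c^2}+\frac{n^2}{r^2}\Big) u_n = 0,\qquad r>b_0,
\end{equation*}
whose unique solution decaying as $r\to\infty$ (forced by the Sommerfeld/radiation condition \eqref{eq1.1c}) and matching the Dirichlet datum $u_n(b_0,s)=\widehat G_n(s):=\mathcal L[\widehat G_n](s)$ is
\begin{equation*}
u_n(r,s)=\frac{K_n(sr/c)}{K_n(sb_0/c)}\,\widehat G_n(s).
\end{equation*}

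Next I would massage this transfer function into a form whose inverse Laplace transform is accessible. The key is to peel off the ``free-propagation'' factor: using the large-argument asymptotics \eqref{asympt} one sees $\sqrt{r}\,K_n(sr/c)\big/\big(\sqrt{b_0}\,K_n(sb_0/c)\big)\to e^{-s(r-b_0)/c}$, so I would write
\begin{equation*}
u_n(r,s)=\sqrt{\frac{b_0}{r}}\,e^{-s\beta_0}\,\widehat G_n(s)
+\Big[\frac{K_n(sr/c)}{K_n(sb_0/c)}-\sqrt{\tfrac{b_0}{r}}\,e^{-s\beta_0}\Big]\widehat G_n(s),\qquad \beta_0=\frac{r-b_0}{c}.
\end{equation*}
The first term inverts immediately to $\sqrt{b_0/r}\,\widehat G_n(t-\beta_0)$ (with the causal shift producing the ``$t<\beta_0\Rightarrow 0$'' statement, reflecting finite propagation speed). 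For the bracketed term I would define $h_n(r,s):=e^{s\beta_0}\big[K_n(sr/c)/K_n(sb_0/c)\big]-\sqrt{b_0/r}$, so that the second term equals $e^{-s\beta_0}h_n(r,s)\widehat G_n(s)$, i.e. the Laplace transform of $H_n(r,\cdot)\ast\widehat G_n(\cdot-\beta_0)$ where $H_n(r,t):=\mathcal L^{-1}[h_n(r,\cdot)](t)$. This gives \eqref{exactslncoef}; it remains to identify $H_n$ with \eqref{exactslncoef1}.

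Finally I would invert $h_n(r,s)$ by the Bromwich integral and the residue theorem, exactly paralleling the proof of Theorem \ref{theorem1}. Substituting $z=sb_0/c$, the poles of $K_n(sr/c)/K_n(sb_0/c)$ in $s$ are at $s=cz_j^n/b_0$, $j=1,\dots,M_n$, where $z_j^n$ are the zeros of $K_n$; since these are simple (Lemma \ref{lemma2}(iii)), the residue at $s=cz_j^n/b_0$ contributes
\begin{equation*}
\frac{K_n(r z_j^n/b_0)}{(b_0/c)\,K_n'(z_j^n)}\,e^{tcz_j^n/b_0},
\end{equation*}
and I would simplify $K_n(rz_j^n/b_0)=\,$const$\cdot K_{n+1}(rz_j^n/b_0)$ at a zero of $K_n$ using the recurrence \eqref{asympta}, $zK_n'(z)=nK_n(z)-zK_{n+1}(z)$, which at $z=z_j^n$ gives $z_j^n K_n'(z_j^n)=-z_j^n K_{n+1}(z_j^n)$, hence $K_n'(z_j^n)=-K_{n+1}(z_j^n)$; combined with a companion manipulation on the numerator this yields the first (discrete) sum in \eqref{exactslncoef1}. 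The branch cut of $K_n$ along the negative real axis (present because $n$ is an integer order of $K$, cf. the discussion around \eqref{sigmnt2d}) contributes a Hankel-type loop integral; using $\frac{1}{\pi}\mathrm{Im}$ of the jump of $K_n(re^{\pm\ri\pi})$ across the cut — expressible via $I_n$ and $K_n$ on $(0,\infty)$ through the connection formula $K_n(re^{\ri\pi})=(-1)^n K_n(r)\mp\ri\pi I_n(r)$ — produces the integrand $\big[I_n(r\rho/b_0)K_n(\rho)-K_n(r\rho/b_0)I_n(\rho)\big]\big/\big[K_n^2(\rho)+\pi^2 I_n^2(\rho)\big]$ with the factor $(-1)^n$ and $e^{-ct\rho/b_0}$, which is the integral term of \eqref{exactslncoef1}. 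I expect the main obstacle to be the bookkeeping in this last step: one must (i) justify closing the Bromwich contour (decay of $K_n(sr/c)/K_n(sb_0/c)$ for $r>b_0$ on large arcs, uniform in the relevant sectors, using \eqref{asympt}), (ii) correctly account for the subtracted term $\sqrt{b_0/r}$ — which is analytic and contributes nothing to the poles/cut but is exactly what makes the remaining integrals convergent at $s=\infty$ and at $\rho\to 0,\infty$ via the asymptotics \eqref{asymptab}--\eqref{asymptabc} — and (iii) track the $(-1)^n$ and the precise combination $I_nK_n-K_nI_n$ through the branch-cut algebra. Once these are in place, assembling $U=\sum_n\widehat U_n e^{\ri n\phi}$ and noting that the construction automatically satisfies \eqref{eq1.1a}, the initial conditions (causality, since $u_n$ is a shifted causal function), the boundary condition at $r=b_0$, and the radiation condition (built into the choice of $K_n$) completes the proof; the convergence of the series is inherited from that of the Fourier series of $G$ together with the exponential decay of $K_n(sr/c)$ in $n$.
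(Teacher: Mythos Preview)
Your approach is exactly the paper's (Appendix~C): Laplace transform plus Fourier separation gives $u_n(r,s)=\dfrac{K_n(sr/c)}{K_n(sb_0/c)}\,\widehat G_n(s)$, then subtract the free-propagation piece $\sqrt{b_0/r}\,e^{-s\beta_0}$ and invert the regularized transfer function $h_n(r,s)=e^{\beta_0 s}K_n(sr/c)/K_n(sb_0/c)-\sqrt{b_0/r}$ via the Bromwich integral over the same keyhole contour and branch-cut algebra used for Theorem~\ref{theorem1}. One caveat in the bookkeeping you anticipate: your displayed residue omits the factor $e^{\beta_0 s}$ carried by $h_n$, and the recurrence \eqref{asympta} at $z=z_j^n$ only gives $K_n'(z_j^n)=-K_{n+1}(z_j^n)$---it does \emph{not} turn $K_n(rz_j^n/b_0)$ into $K_{n+1}(rz_j^n/b_0)$ since $rz_j^n/b_0$ is not a zero of $K_n$---so that simplification step needs to be redone carefully.
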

\begin{proof}
We sketch its derivation  in Appendix \ref{justappC}. \end{proof}

\begin{remark}\label{remasckk}
 Notice that $\widehat U_n$ satisfies the one-dimension problem:
\begin{align}
& \frac{\partial^2 \widehat U_n}{\partial t^2}-\frac {c^2} r
\frac{\partial}{\partial r} \Big(r \frac {\partial  {\widehat
U_n}}{\partial r}\Big)+ c^2 n^2 \frac { \widehat U_n}{r^2}=0,\quad \;\;  b_0<r<b,\; t>0; \label{1deqna} \\
&  \widehat U_n|_{t=0}=\frac{\partial \widehat U_n}{\partial
t}\Big|_{t=0}=0, \quad  b_0< r< b; \quad \widehat U_n |_{r=b_0}=\widehat G_n,\;\;
t>0; \label{1deqnb}\\
& \Big(\frac 1 c \frac {\partial  \widehat U_n}{\partial t} + \frac
{\partial  \widehat U_n}{\partial r}+\frac {1}{2r} \widehat
U_n\Big)\Big|_{r=b}=\int_0^t  \sigma_n(t-\tau)  \widehat U_n (b,\tau)
d\tau, \;\; t>0.\label{1dbndry} \qed
\end{align}
\end{remark}

We take the Dirichlet data:
\begin{equation}\label{example1}
G(\phi,t)=A_1e^{-\iota ((b_0\cos\,\phi-x_s)^2+(b_0\sin\,\phi-y_s)^2)}\sin^p(\omega
t),
\end{equation}
where  $A_1=10, \iota=0.1, x_s=y_s=2.1, b_0=2,$ and $\omega, p$  are to  be specified later.
We compute the Fourier coefficient
\[
\widehat G_n(t)=\frac { A_1}{2\pi}\sin^p(\omega t)\int_0^{2\pi} e^{-\iota ((b_0\cos\,\phi-x_s)^2+(b_0\sin\,\phi-y_s)^2)}e^{-\ri n\phi}d\phi,
\]
via the fast Fourier transform with  sufficient  Fourier points so that the error of numerical integration  can be  neglected.

The first important issue is to compute the exact solution and $\sigma_n$ very accurately.  We point out that  the convolution  in
\eqref{exactslncoef} can be computed exactly, and  the improper integral in \eqref{exactslncoef1} can be treated by a
similar manner as the improper integral of $\sigma_n(t)$ in Subsection  \ref{impComLab}.
We adopt a Newton's iteration method to compute the zeros of $K_n(z)$  accurately for moderate large $n,$ and use
a (composite) Legendre-Gauss quadrature  to evaluate the integral with  high precision.

For the readers' reference, we tabulate in  Table \ref{tb7} some samples  of $\sigma_{n}(t)$ with $b=3, c=5.$
\begin{table}[!ht]
{\footnotesize
\begin{center}
 \caption{\small Some samples of  $\sigma_n(t).$}
 \vspace*{-8pt}
\begin{tabular}{|c|c|c|c|c|c|}
\hline
$n$  & $\sigma_n(0.1)$ & $\sigma_n(2)$ & $n$  & $\sigma_n(0.1)$ & $\sigma_n(2)$ \\
\hline
0  &  \,\,5.922023678764810e-2   &      \,\,1.127355852971518e-2    &5   & -5.394296605508512 &   \,\,2.652114252130534e-3\\
\hline
1  &    -1.770775252065292e-1    &     -1.849758830245009e-2        & 6  & -7.505798337812085  &       -1.628640292879232e-3    \\
\hline
2  &     -8.766785435408012e-1  &     -3.516167498630298e-3         & 7  & -9.786660490985828  &     \,\,1.032611050422504e-3   \\
\hline
3  & -2.012014067208546         &      \,\,6.186828109004313e-3     & 8  & -12.14241492386835  &       -6.793719076424341e-4 \\
\hline
4  & -3.538153424430011         &     -4.319820513209325e-3         & 9  & -14.47330651171318  &     \,\,4.628600071971645e-4 \\
\hline
\end{tabular}
\end{center}\label{tb7}
}
\end{table}

We plot in Figure \ref{Afigrr2} the exact solution with $p=2, b_0=2, b=4,\omega=10\pi$  at various $t.$ 
We see that the Dirichlet data $G(\phi,t)$ acts as a dynamical wave-maker.

\begin{figure}[!th]
  \begin{center}
    \includegraphics[width=0.28\textwidth]{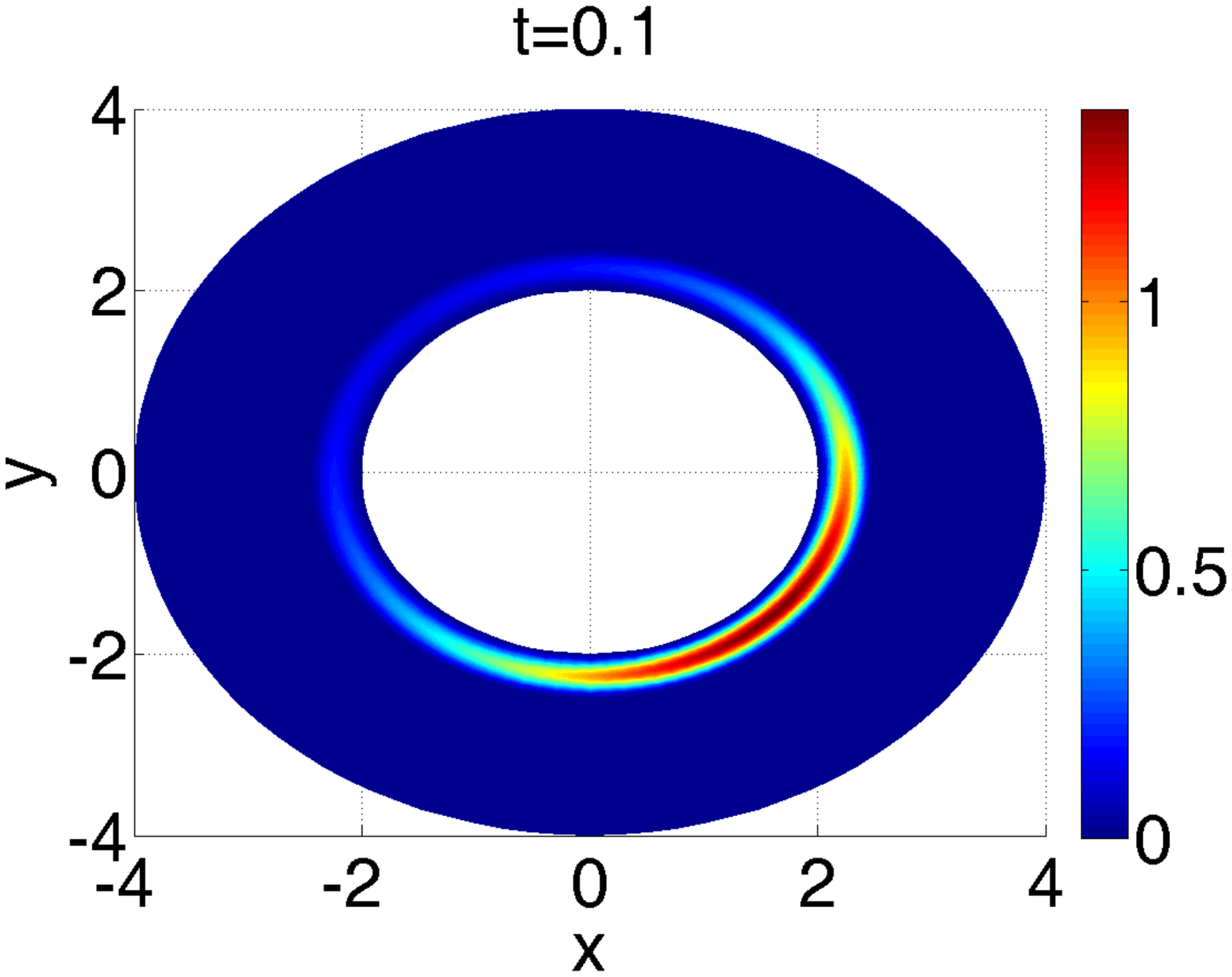}
    \includegraphics[width=0.28\textwidth]{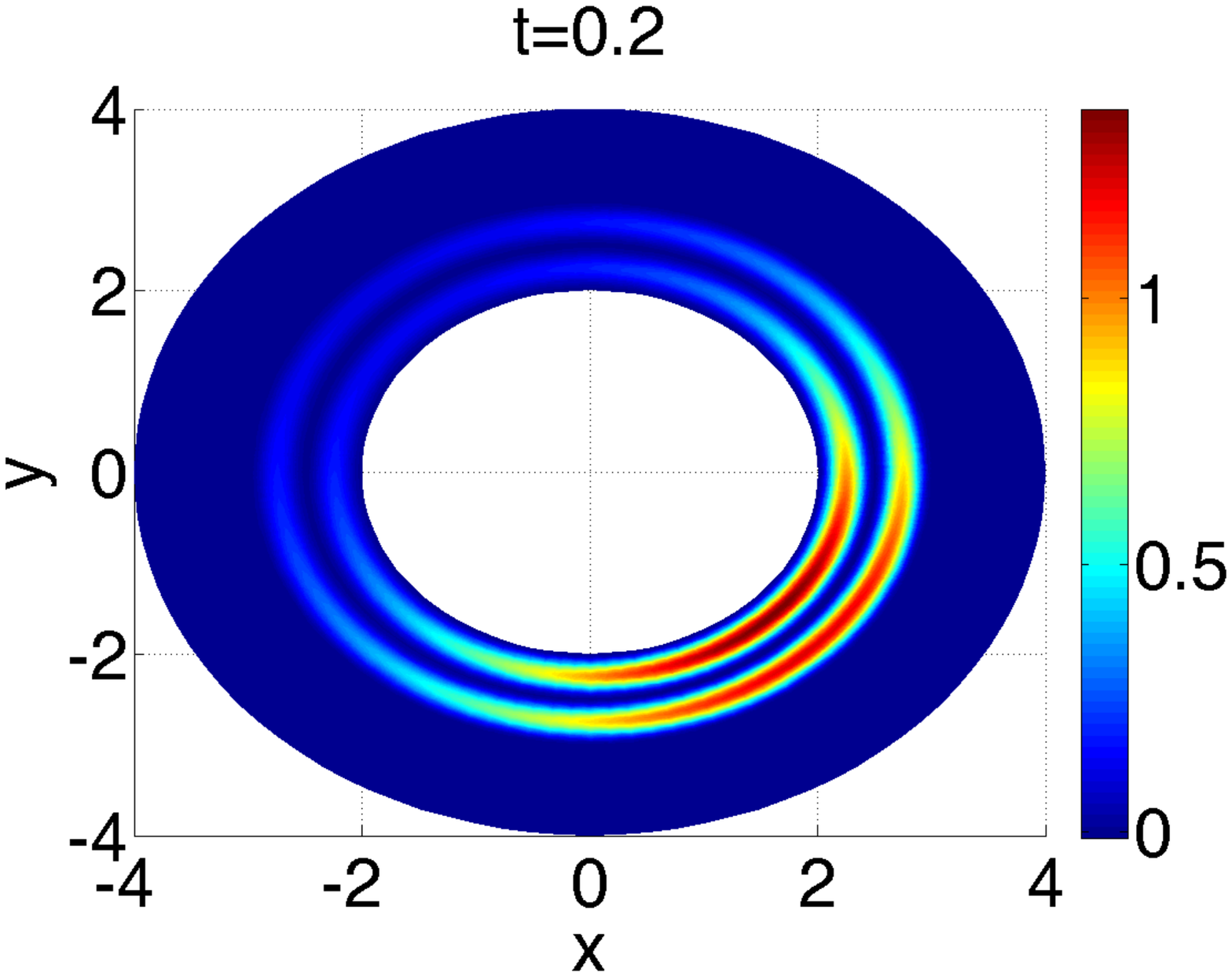}
    \includegraphics[width=0.28\textwidth]{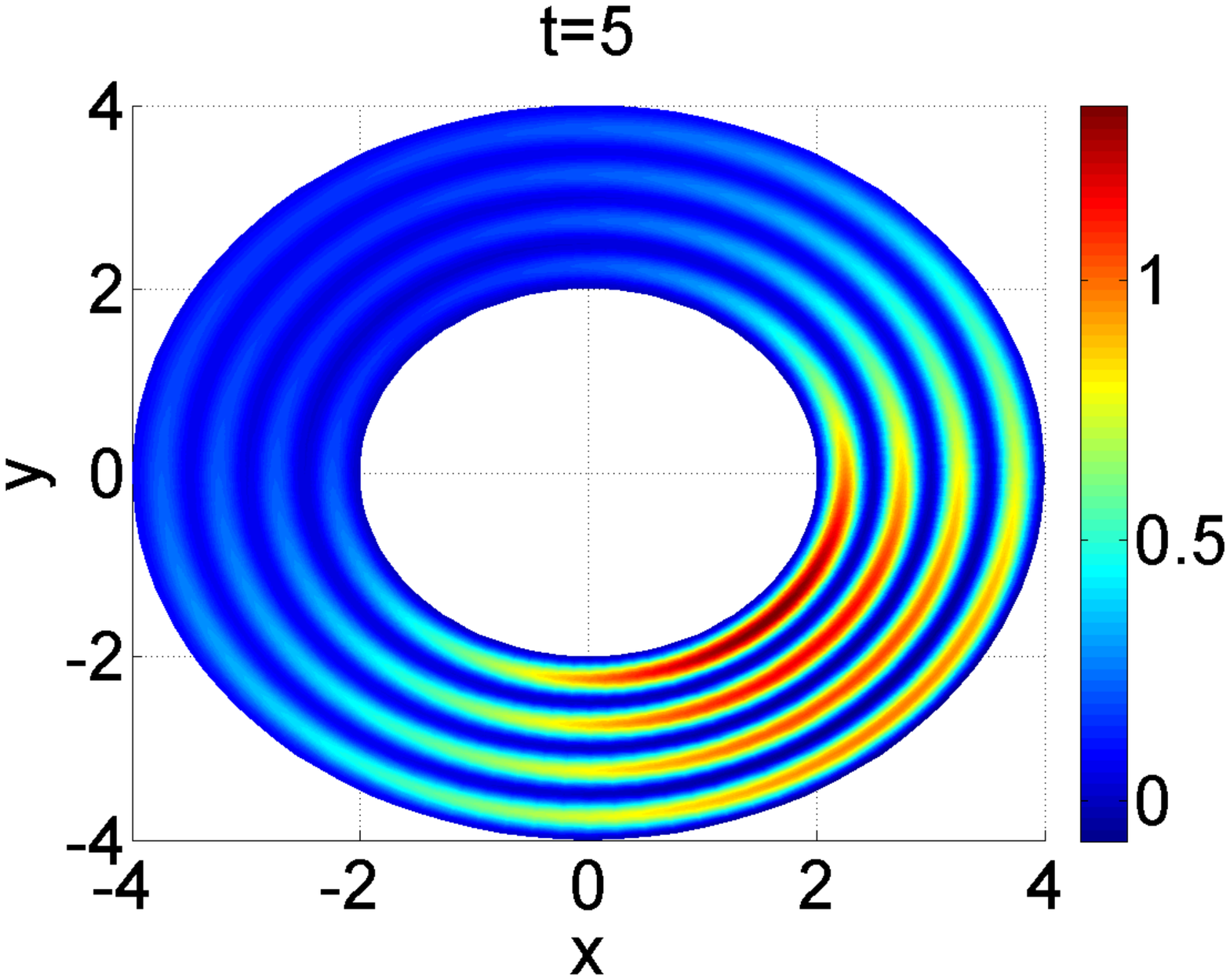}
\caption{\small Time evolution of exact solution with $\omega=10\pi$.}\label{Afigrr2}
  \end{center}
  \end{figure}

\subsection{Accuracy of the NRBC}
Now, we examine   accuracy of the  NRBC. Notice that
\begin{equation*}\label{err1}
\max_{|\phi|\le 2\pi}\big|\partial_r U_M-T_d(U_M)\big|\le
\sum_{|n|=0}^M e_n(t),\; e_n(t):= \Big|\Big(\frac 1 c \frac {\partial \widehat U_n}{\partial
t} + \frac {\partial \widehat U_n }{\partial r} +\frac {\widehat
U_n}{2r}\Big)\Big|_{r=b}-\sigma_n(t)\ast\widehat U_n(b,t)\Big|,
\end{equation*}
where $T_d(U_M)$ is defined in \eqref{GdUadd} with $d=2,$  and
$U_M(b,\phi,t)=\sum_{|n|=0}^M \widehat U_n(b,t) e^{\ri n \phi}$
is the truncation of the exact solution in Proposition  \ref{exact}.  We choose the same parameters as those
for the exact solution in Figure  \ref{Afigrr2}, and take $M=32$ so that $\{|\widehat U_n(b,t)|\}$ are sufficiently small for
all modes $|n|\le M$ and all $t$ of interest.   Note that the differentiations in
$e_n(t)$ can be performed exactly  on the exact solution. 

In Table \ref{tb8}, we tabulate the errors:
\[
E_M^{(1)}(t)=\max_{|n|\le M} e_n(t),\quad E_M^{(2)}(t)=\sum_{|n|=0}^M e_n(t),
\]
at some samples of $t,$  compute the errors at the outer artificial boundary $r=b,$ where the exact solution has the magnitude as large as possible.

\begin{table}[!ht]
{\footnotesize
\begin{center}
 \caption{\small The errors $E_M^{(1)}(t)$ and $E_M^{(2)}(t).$  }
 \vspace*{-8pt}
\begin{tabular}{|c|c|c|c|c|c|c|c|c|}
\hline \multicolumn{1}{|c|}{$t$}&
\multicolumn{2}{c|}{$\omega=10\pi,b=2.22$}&
\multicolumn{2}{c|}{$\omega=10\pi, b=2.75$} &
\multicolumn{2}{c|}{$\omega=20\pi, b=2.38$}&
\multicolumn{2}{c|}{$\omega=20\pi, b=2.87$}
\\
\cline{2-9}
 & $E_M^{(1)}(t)$   & $E_M^{(2)}(t)$ & $E_M^{(1)}(t)$
 & $E_M^{(2)}(t)$   & $E_M^{(1)}(t)$& $E_M^{(2)}(t)$
 & $E_M^{(1)}(t)$   & $E_M^{(2)}(t)$ \\
\cline{1-9}
0.5 &    2.637e-16   &  4.902e-16  &  7.026e-17  &  1.148e-16   &  6.245e-17   &  1.603e-16   &  8.327e-17   &  1.752e-16 \\
\hline
1   &    4.684e-16   &  8.406e-16  &  2.099e-16  &  2.630e-16   &   9.021e-17  &  2.390e-16   &  2.272e-16   &  4.405e-16\\
 \cline{1-9}
5   &    2.567e-16   &  4.386e-16  &  1.131e-15  &  1.176e-15   &  7.251e-16   &  9.053e-16   &  1.318e-15   &  1.473e-15\\
 \cline{1-9}
10  &    7.772e-16   &  1.020e-15  &  2.207e-15  &  2.227e-15   &  1.457e-15   &  1.960e-15   &  2.387e-15   &  2.575e-15\\
\hline
\end{tabular}
\end{center}\label{tb8}
}
\end{table}
We see from Table \ref{tb8} that in all tests, the errors are extremely small. This validates the formula  for $\sigma_n$ in Theorem \ref{theorem1}  and high accuracy of the  numerical treatment.

\subsection{Numerical tests for the spectral-Galerkin-Newmark's scheme}
Hereafter, we provide some numerical results to illustrate  the convergence  of the numerical scheme described in Section \ref{sect4}.    In the following computations, we take the Dirichlet data given by  \eqref{example1}
with $A_1=10, \iota=0.1, x_s=y_s=2.1$ and $b_0=2$ as before.

 Given a cut-off number  $M>0,$ we compute the numerical solutions $\{\widehat U_n^N\}$ of
   \eqref{1deqna}-\eqref{1dbndry} for the modes $|n|\le M$ by using the spectral-Galerkin
 and Newmark's time integration scheme. The full numerical solution of the problem in Proposition
 \ref{exact} is then defined as  $U_{M}^N(r,\phi,t)=\sum_{|n|=0}^{M} \widehat{U}_n^N(r,t)e^{\ri
n\phi},$ and the exact solution
$U_M(r,\phi,t)=\sum_{|n|=0}^{M} \widehat{U}_n(r,t)e^{\ri n\phi}$  is evaluated as before.  Once again, we choose
$M$ such that the Fourier coefficient of the exact solution $|\widehat{U}_n|\le 10^{-16}$ for $|n|>M.$
 The numerical errors are measured by
  \[
\widehat{E}_M^N(t)=\max_{|n|\le M}\big\|\widehat{U}_n^N(\cdot,t)-\widehat{U}_n(\cdot,t)\big\|_{L^2, N},
\quad \widetilde{E}_M^N(t)=\max_{|n|\le M} \big\|\widehat{U}_n^N(\cdot,t)-\widehat{U}_n(\cdot,t)\big\|_{L^\infty,N},
\]
where $\|\cdot\|_{L^2,N}$ is the discrete $L^2$-norm associated with the Legendre-Gauss-Lobatto interpolation,  and $\|\cdot\|_{L^\infty,N}$ is the corresponding discrete maximum norm.

To test the second-order convergence of the Newmark's scheme,  we
choose $N=50$ so that the error of the spatial discretization is
negligible. We provide in Table \ref{tb9a}  the numerical errors and
the order of convergence for various $t$ with $M=15, b=5,
\omega=\pi,$ and $p=6.$  As expected, we observe a second-order
convergence of the time integration.
\begin{table}[!ht]
{\footnotesize
\begin{center}
 \caption{\small The errors $\widehat{E}_M^N(t)$ and $\widetilde{E}_M^N(t),$ and
order of convergence.}
\vspace*{-8pt}
\begin{tabular}{|c|c|c|c|c|c|c|c|c|c|c|c|}
\hline \multicolumn{1}{|c|}{$t$}&\multicolumn{1}{|c|}{$\Delta t$}&
\multicolumn{1}{c|}{$\widehat{E}_M^N(t)$}&
\multicolumn{1}{c|}{order} &
\multicolumn{1}{c|}{$\widetilde{E}_M^N(t)$}&
\multicolumn{1}{c|}{order}&\multicolumn{1}{|c|}{$t$}&\multicolumn{1}{|c|}{$\Delta
t$}& \multicolumn{1}{c|}{$\widehat{E}_M^N(t)$}&
\multicolumn{1}{c|}{order} &
\multicolumn{1}{c|}{$\widetilde{E}_M^N(t)$}&
\multicolumn{1}{c|}{order}
\\
\cline{1-12}
     & 1e-03  & 1.21e-05 &         & 1.93e-05 &            &      & 1e-03 & 1.23e-05 &         & 2.01e-05 &       \\
\cline{2-6} \cline{8-12}
 1   & 5e-04  & 3.02e-06 & 2.00005 & 4.83e-06 &  1.99994   &  3   & 5e-04 & 3.07e-06 & 2.00007 & 5.02e-06 & 1.99997 \\
\cline{2-6} \cline{8-12}
     & 1e-04  & 1.21e-07 & 1.99993 & 1.93e-07 &  1.99986   &      & 1e-04 & 1.23e-07 & 1.99997 & 2.01e-07 & 1.99998 \\
\cline{2-6} \cline{8-12}
     & 5e-05  & 3.02e-08 & 2.00012 & 4.83e-08 & 1.99906    &      & 5e-05 & 3.07e-08 & 2.00007 & 5.02e-08 & 2.00004 \\
\cline{2-12}
     & 1e-03 & 1.23e-05  &         & 2.01e-05 &            &      & 1e-03 & 1.23e-05 &         & 2.01e-05 &          \\
\cline{2-6} \cline{8-12}
 2   & 5e-04  & 3.07e-06 & 1.99998 & 5.02e-06 & 1.99997    &  4   & 5e-04 & 3.07e-06 & 2.00007 & 5.02e-06 & 1.99997 \\
\cline{2-6} \cline{8-12}
     & 1e-04  & 1.23e-07 & 2.00001 & 2.01e-07 & 1.99998    &      & 1e-04 & 1.23e-07 & 1.99997 & 2.01e-07 & 1.99998\\
\cline{2-6} \cline{8-12}
     & 5e-05  & 3.07e-08 & 2.00002 & 5.02e-08 & 1.99993    &      & 5e-05 & 3.07e-08 & 2.00007 & 5.02e-08 & 2.00004\\
\cline{2-12} \hline
\end{tabular}
\end{center}\label{tb9a}
}
\end{table}

Next, we fix the time step size $\Delta t=10^{-5}$ and choose different $N$ to check the accuracy in spatial discretization. The convergence behavior is illustrated in Table \ref{tb9b}.  With a small
number of modes in space, we observe a fast decay of the errors, which is typical for the spectral approximation.

\begin{table}[!ht]
{\footnotesize
\begin{center}
 \caption{\small The errors $\widehat{E}_M^N(t)$ and $\widetilde{E}_M^N(t).$}
 \vspace*{-8pt}
\begin{tabular}{|c|c|c|c|c|c|c|c|c|}
\hline \multicolumn{1}{|c|}{$t$}& \multicolumn{2}{c|}{$N=8$}&
\multicolumn{2}{c|}{$N=10$}& \multicolumn{2}{c|}{$N=16$}&
\multicolumn{2}{c|}{$N=32$}
\\
\cline{2-9}
 & $\widehat{E}_M^N(t)$     & $\widetilde{E}_M^N(t)$ & $\widehat{E}_M^N(t)$
 & $\widetilde{E}_M^N(t)$   & $\widehat{E}_M^N(t)$   & $\widetilde{E}_M^N(t)$ & $\widehat{E}_M^N(t)$& $\widetilde{E}_M^N(t)$\\
 \hline
0.5  & 2.349e-04  & 2.687e-04  &   2.253e-05   &  2.846e-05 & 8.224e-07  & 1.207e-06 & 9.229e-09 &  1.639e-08  \\
\hline
1.0  & 3.877e-04  & 4.179e-04  &   1.708e-05   &  2.042e-05 & 1.833e-07  & 2.777e-07 & 1.959e-09 &  2.673e-09 \\
\hline
1.5  & 3.276e-04  & 3.596e-04  &   5.965e-06   &  7.146e-06 & 3.072e-08  & 5.238e-08 & 1.508e-09 &  1.762e-09  \\
\hline
2.0  & 4.105e-04  & 4.368e-04  &   1.092e-05   &  1.188e-05 & 7.353e-09  & 1.238e-08 & 1.237e-09 &  2.119e-09  \\
\hline
2.5  & 3.239e-04  & 3.607e-04  &   5.724e-06   &  6.997e-06 & 2.381e-09  & 3.182e-09 & 1.488e-09 &  1.734e-09  \\
\hline
3.0  & 4.113e-04  & 4.380e-04  &   1.095e-05   &  1.189e-05 & 1.404e-09  & 2.893e-09 & 1.227e-09 & 2.009e-09   \\
\hline
3.5  & 3.238e-04  & 3.608e-04  &   5.747e-06   &  7.036e-06 & 1.554e-09  & 2.063e-09 & 1.488e-09 & 1.732e-09   \\
\hline
4.0  & 4.113e-04  & 4.380e-04  &   1.095e-05   &  1.189e-05 & 1.269e-09  & 2.061e-09 & 1.227e-09 & 2.007e-09   \\
\hline
\end{tabular}
\end{center}\label{tb9b}
}
\end{table}

\begin{figure}[!t]
\subfigure[]{
\begin{minipage}[t]{0.32\textwidth}
\centering
\rotatebox[origin=cc]{-0}{\includegraphics[width=1.85in, height=1.8in]{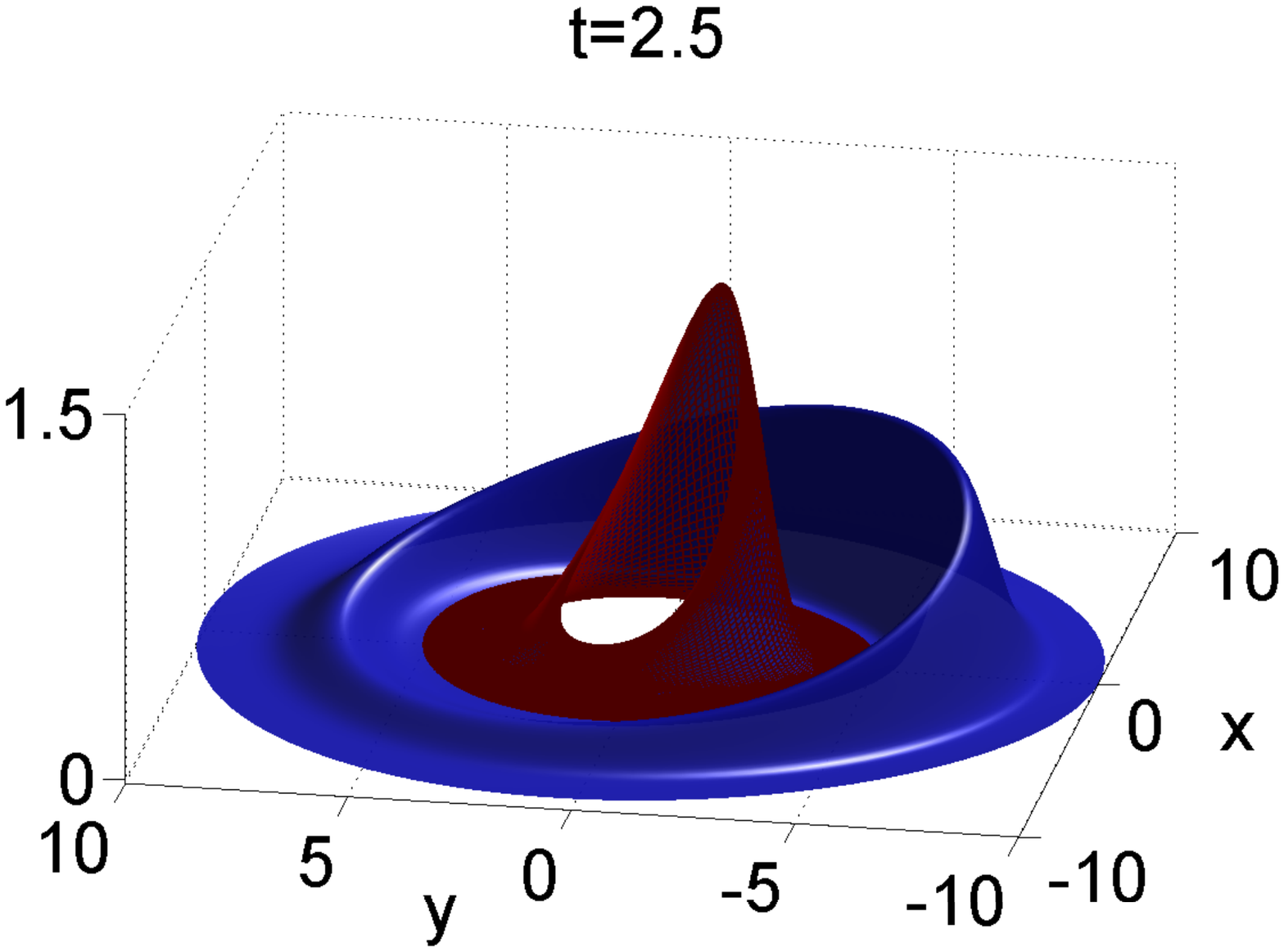}}
\end{minipage}}%
\subfigure[]{
\begin{minipage}[t]{0.32\textwidth}
\centering
\rotatebox[origin=cc]{-0}{\includegraphics[width=1.85in, height=1.8in]{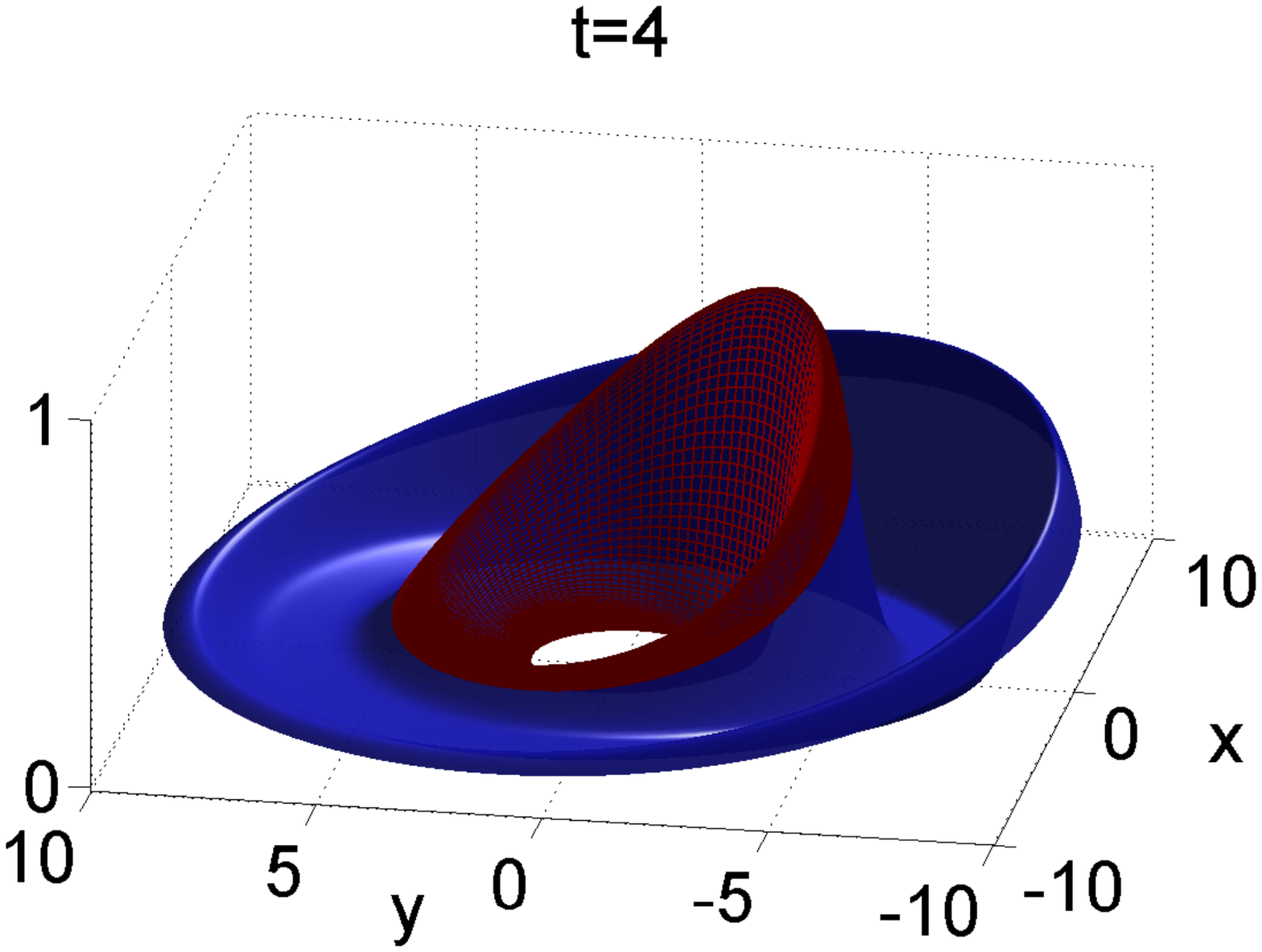}}
\end{minipage}}%
\subfigure[]{
\begin{minipage}[t]{0.32\textwidth}
\centering
\rotatebox[origin=cc]{-0}{\includegraphics[width=1.85in, height=1.8in]{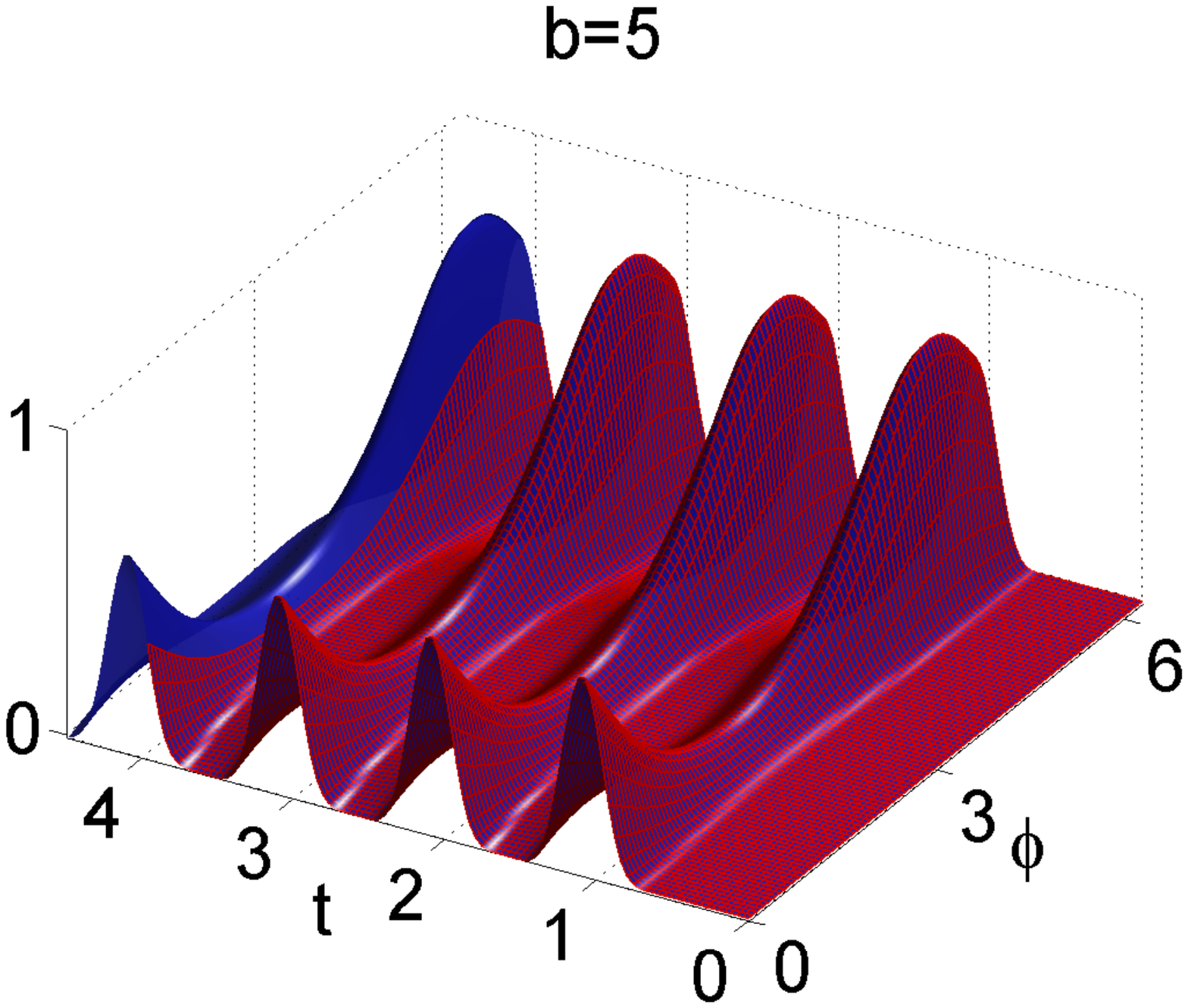}}
\end{minipage}}%
\caption{\small Propagation of exact  and numerical solutions.}
\label{fig2}
  \end{figure}

Finally, we plot in Figure  \ref{fig2} the numerical solution (red nets)   against the exact solution (blue smooth surface),
where the exact solution in (a) and (b) is in the annulus: $2\le r\le 10,$ while the numerical  solution within $2\le r\le 5.$
Figure \ref{fig2}(c) shows the wave propagation through the artificial boundary at $b=5, 0\le \phi\le 2\pi$ for
$0\le t\le 4$ (numerical solution) against      $0\le t\le 4.5$ (exact solution).  We see that the red nets and
blue surfaces  agree well in these plots. Moreover, we observe the waves pass the boundary transparently.
This shows that the proposed  scheme is very stable and has   high-order   accuracy.

\vskip 12pt

\noindent\underline{\large\bf Concluding remarks}

\vskip 5pt

We proposed in this paper analytic and accurate numerical means for
the time-domain wave propagation with exact and global nonreflecting
boundary conditions.  We derived the analytic expressions of the
convolution kernels and presented highly accurate methods for their
evaluations.  We analyzed the stability of the
truncated problem and provided efficient numerical schemes.  Ample
numerical results were given to demonstrate the features  of the
method. We shall report the combination of the methods with the
boundary perturbation technique in a future work.   The techniques 
 and ideas in this paper will be useful to study the Maxwell equations and elastic wave propagations.

\begin{appendix}
\section{Proof of Theorem \ref{theorem1}} \label{pffssl}
\renewcommand{\theequation}{A.\arabic{equation}}
\setcounter{equation}{0}

We first consider $d=2.$ Note that  $F_n(z)$ is a
multi-valued  function with branch points at $z=0$ and infinity. The
contour $L$ for  \eqref{inverlap} is
depicted in Figure \ref{contour1} (left) with the branch-cut along the negative real axis.

\vspace*{-6pt}
\begin{figure}[!ht]
\begin{center}
\includegraphics[width=0.55\textwidth]{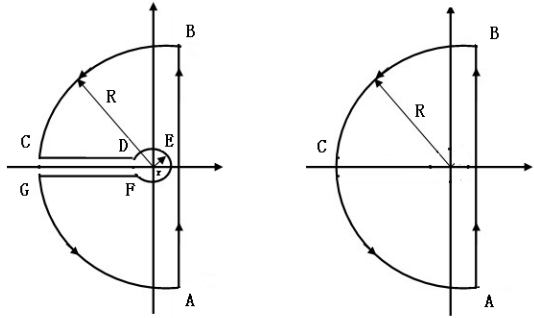}
\end{center}
\vspace*{-0.7cm} \caption{\small Contour $L$ for the
inverse Laplace transform. Left: $d=2$. Right:
$d=3.$}\label{contour1}
\end{figure}

We know from  Lemma \ref{lemma2} that $F_n(z)$ has a finite number
of simple poles in the second and third quadrants, Therefore, for
any $n\ge 0,$ it follows from the residue theorem that
\begin{equation}\label{residathm}
\begin{split}
&\lim\limits_{\scriptstyle R \to +\infty \hfill \atop
\scriptstyle r \to 0^+ \hfill}\oint_L F_n(z)e^{czt/b}dz =
2\pi \ri \sum\limits_{j=1}^{M_n}{\rm Res}\,\big[F_n(z) e^{czt/b}, z_{j}^n\big]\\
&\quad =2\pi \ri \sum\limits_{j=1}^{M_n} \lim\limits_{z\rightarrow
z_{j}^n}\bigg\{(z-z^{n}_j)e^{czt/b}\left[z+\frac{1}{2}+z\frac{K'_n(z)}{K_n(z)}\right]\bigg\}
=2\pi \ri \sum\limits_{j=1}^{M_n} e^{ct z_{j}^{n}/b}z_j^n.
\end{split}
\end{equation}
Thus, by \eqref{inverlap}  and \eqref{residathm},
\begin{equation}\label{Knzformu}
\begin{split}
\frac{2b^2\pi \ri} c \sigma_n(t)&=2\pi \ri \sum\limits_{j=1}^{M_n}
e^{ct z_{j}^{n}/b}z_j^n - \lim\limits_{\scriptstyle R \to +\infty
\hfill \atop\scriptstyle r \to 0^+
\hfill}\left[\int_{\wideparen{BC}}F_n(z)e^{czt/b}dz+\int_{\overline{CD}}
F_n(z)e^{czt/b}dz\right. \\
&\left.+\int_{\wideparen{DEF}}F_n(z)e^{czt/b}dz+\int_{\overline{FG}}F_n(z)e^{czt/b}dz
+\int_{\wideparen{GA}}F_n(z)e^{czt/b}dz\right].
\end{split}
\end{equation}
In view of \eqref{akasymp}, we find from the Jordan's lemma (cf. \cite{DaviesBrian2002,CohenLap2007}) and a direct calculation that
\begin{equation}\label{tempw2}
\lim\limits_{\scriptstyle R \to +\infty \hfill
\atop\scriptstyle r \to 0^+
\hfill}\left[\int_{\wideparen{BC}}F_n(z)e^{czt/b}dz+
\int_{\wideparen{DEF}}F_n(z)e^{czt/b}dz+\int_{\wideparen{GA}}F_n(z)e^{czt/b}dz\right]=0,
\end{equation}
since each contour integral tends to zero. Thus, we only need to
evaluate the integrals along the line segments $\overline{CD}$ and
$\overline{FG}$. We have
\begin{equation*}\label{egnRr}
\begin{split}
\lim\limits_{\scriptstyle R \to +\infty \hfill \atop \scriptstyle r
\to 0^+ \hfill}&\left[\int_{\overline{CD}}F_n(z)e^{czt/b}dz
+\int_{\overline{FG}}F_n(z)e^{czt/b}dz\right]
=\int_0^{\infty}re^{-ctr/b}\left[\frac{K'_n(re^{-\ri\pi})}{K_n(re^{-\ri\pi})}-\frac{K'_n(re^{\ri\pi})}{K_n(re^{\ri\pi})}\right]dr.
\end{split}
\end{equation*}
By  Formula  9.6.31 of \cite{Abr.S84}, 
\begin{equation*}\label{14}
 K_{n}(re^{\pi \ri})=e^{-n\pi \ri}K_{n}(r)-\pi \ri I_{n}(r),\quad
K_{n}(re^{-\pi \ri})=e^{n\pi \ri}K_{n}(r)+\pi \ri I_{n}(r),
\end{equation*}
which, together with \eqref{asympta}, implies
\begin{equation}\label{12www}
\begin{split}
&\frac{K'_n(re^{\ri\pi})}{K_n(re^{\ri\pi})}-\frac{K'_n(re^{-\ri\pi})}{K_n(re^{-\ri\pi})}=\frac{K_{n+1}(re^{-\ri\pi})}
{K_n(re^{-\ri\pi})}-\frac{K_{n+1}(re^{\ri\pi})}{K_n(re^{\ri\pi})}\\
&\quad =\frac{e^{(n+1)\pi \ri}K_{n+1}(r)+\pi \ri I_{n+1}(r)}{e^{n\pi
\ri}K_{n}(r)+\pi \ri I_{n}(r)}
-\frac{e^{-(n+1)\pi \ri}K_{n+1}(r)-\pi \ri I_{n+1}(r)}{e^{-n\pi \ri}K_{n}(r)-\pi \ri I_{n}(r)}\\
&\quad =\frac{(-1)^n 2\pi \ri [K_{n+1}(r)I_n(r)+
K_n(r)I_{n+1}(r)]}{K^2_{n}(r)+\pi^2I^2_{n}(r)}= \frac{(-1)^{n} 2\pi \ri }{r\big[ K^2_{n}(r)+\pi^2I^2_{n}(r)\big]},
\end{split}
\end{equation}
where  we used  the Wronskian identity (see Formula 9.6.15 of \cite{Abr.S84}):
\begin{eqnarray}\label{wronska}
I_{n}(z)K_{n+1}(z)+I_{n+1}(z)K_{n}(z)=z^{-1}.
\end{eqnarray}
A combination of \eqref{residathm}-\eqref{12www} leads to
\begin{equation}\label{finalassd}
\sigma_n(t)=\frac{c}{b^2}\Big\{\sum\limits_{j=1}^{M_n}z_j^n e^{ct
z_j^n/b}+ \int_0^{\infty} \frac{(-1)^ne^{-c
tr/b}}{K^2_{n}(r)+\pi^2I^2_{n}(r)}dr \Big\},
\end{equation}
which is  the  expression \eqref{sigmnt2d}.

Now, we turn to the three dimensional case.  It is important to
notice that the kernel function $F_{n+1/2}(z)$ is not a multi-valued
complex function, as opposed to the two dimensional case. Indeed,
although $K_{n+{1}/{2}}(z)$ is multi-valued, in view of the formula
(see Page 80 of \cite{watson}):
\begin{eqnarray}
K_{n+{1}/{2}}(z)=\sqrt{\frac{\pi}{2z}}\sum\limits_{k=0}^n\frac{(n+k)!e^{-z}}{k!(n-k)!(2z)^k},\quad \forall n\ge 0,
\end{eqnarray}
 the fact $1/\sqrt{z}$ can be eliminated from  the ratio $K_{n+1/2}'/K_{n+1/2}.$
Thanks to this property, we use the contour in Figure \ref{contour1}
(right), by the residue theorem and Jordan's lemma, we have
\begin{equation}\label{Fznp1a}
\begin{split}
\lim_{R\to+\infty}&\oint_L F_{n+1/2}(z)e^{czt/b}dz =
\int_{\gamma-\infty \ri}^{\gamma+\infty
\ri}F_{n+1/2}(z) e^{czt/b} dz\\
&= 2\pi \ri \sum\limits_{j=1}^{M_\nu}{\rm Res}\,\big[F_{n+1/2}(z) e^{czt/b}, z_{j}^\nu\big]
=2\pi \ri \sum\limits_{j=1}^{M_\nu} e^{ct z_{j}^{\nu}/b}z_j^\nu,
\end{split}
\end{equation}
for $\nu=n+1/2.$
This leads to the formula \eqref{sigmnt3d}.  \qed

\vskip 10pt

\section{Justification for Remark \ref{ksweiomegs}}\label{justappB}

Thanks to \eqref{khrela}, we have
\begin{equation*}
z\frac{H_{\nu}^{(1)'}(z)}{H_{\nu}^{(1)}(z)} =-\ri
z\frac{K_{\nu}^{'}(-\ri z)}{K_{\nu}(-\ri z)}.
\end{equation*}
Thus, by \eqref{znuschform} with $\nu=n$ and $-\ri z=bs/c,$ we find
\begin{equation*}\label{B.85}
\begin{split}
\frac{s}{c}&+\frac{1}{2b}+\frac{s}{c}\frac{K_n'(bs/c)}{K_n(bs/c)}=-\frac{\ri
c}{b^2} \sum\limits_{j=1}^{N_{n}}\frac{h_{n,j}} {s+\frac{\ri
ch_{n,j}}{b}}+\frac{c}{b^2}\int_0^{\infty}\frac{(-1)^n}{K_n^2(r)
+\pi^2 I_{n}^2(r)}\frac{1}{\frac{cr}{b}+s}dr.
\end{split}
\end{equation*}
Applying  the inverse Laplace transform to both sides of the above identity leads to
\begin{equation*}
\begin{split}
\mathcal{L}^{-1}\left[\frac{s}{c}+\frac{1}{2b}+\frac{s}{c}\frac{K_n'(bs/c)}{K_n(bs/c)}\right]
=\frac{c}{b^2}\sum\limits_{j=1}^{N_{n}}(-\ri h_{n,j})e^{-{\ri
cth_{n,j}}/{b}}+\frac{c}{b^2}\int_0^{\infty}\frac{(-1)^n
e^{-{crt}/{b}}}{K_n^2(r) +\pi^2 I_{n}^2(r)}dr,
\end{split}
\end{equation*}
which turns out to be \eqref{finalassd}, since  $-\ri h_{n,j}$ are
zeros of $K_n(z)$ and $N_n=M_n.$ \qed

\vskip 10pt

\section{Proof of Proposition \ref{exact}}\label{justappC}
\renewcommand{\theequation}{C.\arabic{equation}}
\setcounter{equation}{0}

This problem can be solved by Laplace transform and separation of
variables. Indeed, the Fourier coefficients $\{ \widehat U_n\}$ in
\eqref{exactsln} can be expressed as
\begin{equation}\label{appC2}
 \widehat U_n(r,t)={\mathcal
 L}^{-1}\Big(\frac{K_n(sr/c)}{K_n(sb_0/c)}\Big)(t)\ast \widehat{G}_n(t),\quad r>b_0.
\end{equation}
We next sketch the evaluation of the inverse Laplace transform by using the residue theorem as in Appendix \ref{pffssl}
for $\sigma_n(t).$ Using \eqref{asympt}, we find $\frac{K_n(sr/c)}{K_n(sb_0/c)}\sim
\sqrt{\frac{b_0}{r}}e^{-s\beta_0},$ where $\beta_0=(r-b_0)/c.$ In order to use the Jordan's lemma (cf. \cite{DaviesBrian2002,CohenLap2007}),
we write
\begin{equation}\label{appC3}
\begin{split}
&{\mathcal
L}^{-1}\Big(\frac{K_n(sr/c)}{K_n(sb_0/c)}\Big)(t)={\mathcal
L}^{-1}\bigg(e^{-\beta_0 s}\Big\{e^{\beta_0
s}\frac{K_n(sr/c)}{K_n(sb_0/c)}-\sqrt{\frac{b_0}{r}}\Big\}\bigg)+ {\mathcal
L}^{-1}\Big(e^{-\beta_0 s} \sqrt{\frac{b_0}{r}}\Big)(t)\\
&\qquad =\widetilde{H}_n(r,
t-\beta_0)U_{\beta_0}(t)+\sqrt{\frac{b_0}{r}}\delta(t-\beta_0),
\end{split}
\end{equation}
where $\widetilde{H}_n(r,t)={\mathcal L}^{-1}\big(e^{\beta_0
s}\frac{K_n(sr/c)}{K_n(sb_0/c)}-\sqrt{{b_0}/{r}}\big)(t),$
$U_{\beta_0}(t)$ is the unit step function (which takes $1$ for
$t\ge \beta_0,$ and $0$ for $t<\beta_0$), and $\delta(t)$ is the
Dirac delta function.  By  applying the residue theorem and Jordan's
lemma to the Bromwich's contour integral:
\begin{equation}\label{appC4}
\begin{split}
\widetilde{H}_n(r,t)=\frac 1 {2\pi \ri }\int_{\gamma-\infty
\ri}^{\gamma +\infty \ri} \Big(e^{\beta_0
s}\frac{K_n(sr/c)}{K_n(sb_0/c)}-\sqrt{\frac{b_0}{r}}\Big)e^{ts} ds,
\end{split}
\end{equation}
we obtain \eqref{exactslncoef1} by  using the  contour in Figure
\ref{contour1} (left) and the same argument as in Appendix
\ref{pffssl}.  Denoting $H_n(r,t):=\widetilde{H}_n(r,t-\beta_0),$
the expression \eqref{exactslncoef} follows from \eqref{appC3}. We
leave the details to the interested readers.  \qed
\end{appendix}

\baselineskip 10pt
\bibliography{refwave}

\begin{thebibliography}{10}

\bibitem{Abr.S84}
M.~Abramowitz and I.A. Stegun, editors.
\newblock {\em {Handbook of Mathematical Functions with Formulas, Graphs, and
  Mathematical Tables}}.
\newblock A Wiley-Interscience Publication. John Wiley \& Sons Inc., New York,
  1984.
\newblock Reprint of the 1972 edition, Selected Government Publications.

\bibitem{Alpert}
B.~Alpert, L.~Greengard, and T.~Hagstrom.
\newblock Rapid evaluation of nonreflecting boundary kernels for time-domain
  wave propagation.
\newblock {\em SIAM J. Numer. Anal.}, 37(4):1138--1164 (electronic), 2000.

\bibitem{Alpert02}
B.~Alpert, L.~Greengard, and T.~Hagstrom.
\newblock Nonreflecting boundary conditions for the time-dependent wave
  equation.
\newblock {\em J. Comput. Phys.}, 180(1):270--296, 2002.

\bibitem{Bayliss80}
A.~Bayliss and E.~Turkel.
\newblock Radiation boundary conditions for wave-like equations.
\newblock {\em Comm. Pure Appl. Math.}, 33(6):707--725, 1980.

\bibitem{Bere94}
J.P. Berenger.
\newblock A perfectly matched layer for the absorption of electromagnetic
  waves.
\newblock {\em J. Comput. Phys.}, 114(2):185--200, 1994.

\bibitem{Chen.ZM2009}
Z.M. Chen.
\newblock Convergence of the time-domain perfectly matched layer method for
  acoustic scattering problems.
\newblock {\em Int. J. Numer. Anal. Model.}, 6(1):124--146, 2009.

\bibitem{CISB91}
R.D. Ciskowski and C.A. Brebbia.
\newblock {\em Boundary Element Methods in Acoutics}.
\newblock Kluwer Academic Publishers, 1991.

\bibitem{CohenLap2007}
A.M. Cohen.
\newblock {\em Numerical Methods for {L}aplace Transform Inversion}, volume~5
  of {\em Numerical Methods and Algorithms}.
\newblock Springer, New York, 2007.

\bibitem{CohenG2002}
G.C. Cohen.
\newblock {\em Higher-Order Numerical Methods for Transient Wave Equations}.
\newblock Scientific Computation. Springer-Verlag, Berlin, 2002.
\newblock With a foreword by R. Glowinski.

\bibitem{collins1994inverse}
M.D. Collins and W.A. Kuperman.
\newblock Inverse problems in ocean acoustics.
\newblock {\em Inverse Problems}, 10(5):1023--1040, 1994.

\bibitem{DaviesBrian2002}
B.~Davies.
\newblock {\em Integral Transforms and Their Applications}, volume~41 of {\em
  Texts in Applied Mathematics}.
\newblock Springer-Verlag, New York, third edition, 2002.

\bibitem{Enguist77}
B.~Engquist and A.~Majda.
\newblock Absorbing boundary conditions for the numerical simulation of waves.
\newblock {\em Math. Comp.}, 31(139):629--651, 1977.

\bibitem{Fang.DShen07}
Q.R. Fang, D.P. Nicholls, and J.~Shen.
\newblock A stable, high-order method for three-dimensional, bounded-obstacle,
  acoustic scattering.
\newblock {\em J. Comput. Phys.}, 224(2):1145--1169, 2007.

\bibitem{Dem.G96}
K.~Gerdes and L.~Demkowicz.
\newblock Solution of {$3$}{D}-{L}aplace and {H}elmholtz equations in exterior
  domains using {$hp$}-infinite elements.
\newblock {\em Comput. Methods Appl. Mech. Engrg.}, 137(3-4):239--273, 1996.

\bibitem{Givoli1992}
D.~Givoli.
\newblock {\em Numerical Methods for Problems in Infinite Domains}, volume~33
  of {\em Studies in Applied Mechanics}.
\newblock Elsevier Scientific Publishing Co., Amsterdam, 1992.

\bibitem{GivoliDan2004}
D.~Givoli.
\newblock High-order local non-reflecting boundary conditions: a review.
\newblock {\em Wave Motion}, 39(4):319--326, 2004.
\newblock New computational methods for wave propagation.

\bibitem{grote1995exact}
M.J. Grote and J.B. Keller.
\newblock Exact nonreflecting boundary conditions for the time-dependent wave
  equation.
\newblock {\em SIAM J. Appl. Math}, 55(2):280--297, 1995.

\bibitem{Hagstrom99}
T.~Hagstrom.
\newblock Radiation boundary conditions for the numerical simulation of waves.
\newblock In {\em Acta numerica, 1999}, volume~8 of {\em Acta Numer.}, pages
  47--106. Cambridge Univ. Press, Cambridge, 1999.

\bibitem{HuanThompson2000}
R.N. Huan and L.L. Thompson.
\newblock Accurate radiation boundary conditions for the time-dependent wave
  equation on unbounded domains.
\newblock {\em Internat. J. Numer. Methods Engrg.}, 47(9):1569--1603, 2000.

\bibitem{Jiang1}
S.D. Jiang and L.~Greengard.
\newblock Fast evaluation of nonreflecting boundary conditions for the
  {S}chr\"odinger equation in one dimension.
\newblock {\em Comput. Math. Appl.}, 47(6-7):955--966, 2004.

\bibitem{Jiang2}
S.D. Jiang and L.~Greengard.
\newblock Efficient representation of nonreflecting boundary conditions for the
  time-dependent {S}chr\"odinger equation in two dimensions.
\newblock {\em Comm. Pure Appl. Math.}, 61(2):261--288, 2008.

\bibitem{Kel.G89}
J.B. Keller and D.~Givoli.
\newblock Exact nonreflecting boundary conditions.
\newblock {\em J. Comput. Phys.}, 82(1):172--192, 1989.

\bibitem{LiJing2006}
J.R. Li.
\newblock Low order approximation of the spherical nonreflecting boundary
  kernel for the wave equation.
\newblock {\em Linear Algebra Appl.}, 415(2-3):455--468, 2006.

\bibitem{Lubistian2002}
C.~Lubich and A.~Sch{\"a}dle.
\newblock Fast convolution for nonreflecting boundary conditions.
\newblock {\em SIAM J. Sci. Comput.}, 24(1):161--182 (electronic), 2002.

\bibitem{martin2006multiple}
P.A. Martin.
\newblock {\em {Multiple Scattering: Interaction of Time-Harmonic Waves with N
  Obstacles}}, volume 107.
\newblock Cambridge University Press, 2006.

\bibitem{Nedelec}
J.C. N{\'e}d{\'e}lec.
\newblock {\em Acoustic and Electromagnetic Equations}, volume 144 of {\em
  Applied Mathematical Sciences}.
\newblock Springer-Verlag, New York, 2001.
\newblock Integral representations for harmonic problems.

\bibitem{newmark1959}
N.M. Newmark.
\newblock A method of computation for structural dynamics.
\newblock In {\em Proc. ASCE}, volume~85, pages 67--94, 1959.

\bibitem{Nic.R03}
D.~Nicholls and F.~Reitich.
\newblock Analytic continuation of {D}irichlet-{N}eumann operators.
\newblock {\em Numer. Math.}, 94(1):107--146, 2003.

\bibitem{Nic.S06}
D.~Nicholls and J.~Shen.
\newblock A stable, high--order method for two--dimensional bounded--obstacle
  scattering.
\newblock {\em SIAM J. Sci. Comput.}, 28:1398--1419, 2006.

\bibitem{Nich.Shen2009}
D.~Nicholls and J.~Shen.
\newblock A rigorous numerical analysis of the transformed field expansion
  method.
\newblock {\em SIAM J. Numer. Anal.}, 47(4):2708--2734, 2009.

\bibitem{Shen94b}
J.~Shen.
\newblock Efficient spectral-{G}alerkin method {I}. direct solvers for second-
  and fourth-order equations by using {L}egendre polynomials.
\newblock {\em SIAM J. Sci. Comput.}, 15:1489--1505, 1994.

\bibitem{ShenTaoWang2011}
J.~Shen, T.~Tang, and L.L. Wang.
\newblock {\em {Spectral Methods: Algorithms, Analysis and Applications}},
  volume~41 of {\em Springer Series in Computational Mathematics}.
\newblock Springer-Verlag, Berlin, Heidelberg, 2011.

\bibitem{shlager1995selective}
K.L. Shlager and J.B. Schneider.
\newblock A selective survey of the finite-difference time-domain literature.
\newblock {\em Antennas and Propagation Magazine, IEEE}, 37(4):39--57, 1995.

\bibitem{Sofronov1992}
I.L. Sofronov.
\newblock Conditions for complete transparency on a sphere for a
  three-dimensional wave equation.
\newblock {\em Dokl. Akad. Nauk}, 326(6):953--957, 1992.

\bibitem{Sofronov1998}
I.L. Sofronov.
\newblock Artificial boundary conditions of absolute transparency for two- and
  three-dimensional external time-dependent scattering problems.
\newblock {\em European J. Appl. Math.}, 9(6):561--588, 1998.

\bibitem{szeg75}
G.~Szeg\"o.
\newblock {\em Orthogonal Polynomials (Fourth Edition)}, volume~23.
\newblock AMS Coll. Publ., 1975.

\bibitem{Taflove05}
A.~Taflove and S.C. Hagness.
\newblock {\em {Computational Electrodynamics: the Finite-Difference
  Time-Domain Method}}.
\newblock Artech House Inc., Boston, MA, third edition, 2005.
\newblock With 1 CD-ROM (Windows).

\bibitem{Thompson2000}
L.L. Thompson and R.N. Huan.
\newblock Computation of far-field solutions based on exact nonreflecting
  boundary conditions for the time-dependent wave equation.
\newblock {\em Comput. Methods Appl. Mech. Engrg.}, 190(11-12):1551--1577,
  2000.

\bibitem{ting1986exact}
L.~Ting and M.J. Miksis.
\newblock Exact boundary conditions for scattering problems.
\newblock {\em The Journal of the Acoustical Society of America}, 80:1825,
  1986.

\bibitem{watson}
G.N. Watson.
\newblock {\em A Treatise of the Theory of Bessel Functions (second edition)}.
\newblock Cambridge University Press, Cambridge, UK, 1966.

\bibitem{Wood1990}
W.L. Wood.
\newblock {\em {Practical Time-Stepping Schemes}}.
\newblock Oxford Applied Mathematics and Computing Science Series. The
  Clarendon Press Oxford University Press, New York, 1990.

\end{thebibliography}

\end{document}